\numberwithin{equation}{section}
\newcommand{\CC}{\mathbb {C}}
\newcommand{\RR}{\mathbb{R}}
\newcommand{\Z}{\mathbb{Z}}
 \DeclareMathOperator{\dist}{dist}
\renewcommand{\phi}{\varphi}
\newcommand{\rea}{{\rm Re}\,}
\newcommand{\ima}{{\rm Im}\,}
\newcommand{\pw}{\mathcal{P}W_\pi}
\newcommand{\ppw}{\mathcal{P}W_{2\pi}}
\newcommand{\co}{\mathbb{C}}
\newcommand{\na}{\mathbb{N}}
\newcommand{\zl}{\mathbb{Z}}
\newcommand{\ZZ}{\mathbb{Z}}
\newcommand{\cp}{\mathbb{C_+}}
\newcommand{\rl}{\mathbb{R}}
\newcommand{\he}{\mathcal{H}(E)}
\newcommand{\nar}{\mathcal{N}}
\newtheorem{Thm}{Theorem}[section]
\newtheorem{theorem}[Thm]{Theorem}
\newtheorem{lemma}[Thm]{Lemma}
\newtheorem{proposition}[Thm]{Proposition}
\newtheorem{corollary}[Thm]{Corollary}
\newtheorem{remark}[Thm]{Remark}
\begin{document}
\sloppy
\title[Hereditary completeness for systems of exponentials]{Hereditary 
completeness for systems \\ 
of exponentials and reproducing kernels}
\author{Anton Baranov, Yurii Belov, Alexander Borichev}
\address{Anton Baranov, 
\newline
Department of Mathematics and Mechanics,
St.Petersburg State University,\hfill\hfill\\
St.Petersburg, Russia,
\newline {\tt anton.d.baranov@gmail.com}
\newline\newline \phantom{x}\,\, Yurii Belov,
\newline 
Chebyshev Laboratory,
St.Petersburg State University,
St.Petersburg, Russia,
\newline {\tt j\_b\_juri\_belov@mail.ru}
\newline\newline \phantom{x}\,\, Alexander Borichev, 
\newline Laboratoire d'Analyse, Topologie, Probabilit\'es, 
Aix--Marseille Universit\'e,\hfill\hfill\newline Marseille, France,
\newline {\tt borichev@cmi.univ-mrs.fr}
}
\thanks{The first and the second author were supported by the Chebyshev Laboratory 
(St.Petersburg State University) under RF Government grant 11.G34.31.0026.
The first author was partially supported by RFBR grant 11-01-00584-a 
and by Federal Program of Ministry of Education 2010-1.1-111-128-033.
The research of the third author was partially supported by the ANR grants DYNOP and FRAB}

\begin{abstract}
We solve the spectral synthesis problem for exponential systems
on an interval.  Namely, we prove that any complete and minimal  
system of exponentials $\{e^{i\lambda_n t}\}$ in $L^2(-a,a)$
is hereditarily complete up to a one-dimensional defect. 
This means that there is at most one (up to a constant factor) 
function $f$ which is
orthogonal to all the summands in its formal Fourier series 
$\sum_n (f,\tilde e_n) e^{i\lambda_n t}$, where
$\{\tilde e_n\}$ is the system biorthogonal to 
$\{e^{i\lambda_n t}\}$.
However, this one-dimensional defect is possible and, thus, 
there exist nonhereditarily complete exponential systems.
Analogous results are obtained for systems 
of reproducing kernels in de Branges spaces.
For a wide class of de Branges spaces 
we construct nonhereditarily complete systems of reproducing kernels, 
thus answering a question posed by N. Nikolski.
\end{abstract}

\maketitle

\section{Introduction and main results}

\subsection{Hereditary completeness in general setting}

A system of vectors  $\{x_n\}_{n\in N}$ in a separable Hilbert space 
$H$ is said to be {\it exact} if it is both {\it complete} 
(i.e., $\overline{Span}\{x_n\} = H$)
and {\it minimal} 
(i.e., $\overline{Span}\{x_n\}_{n\neq n_0} \neq H$ for any $n_0$). 
Given an exact system we consider its (unique) {\it biorthogonal} 
system $\{x'_n\}_{n\in N}$ which satisfies 
$ (x_m, x'_n) = \delta_{mn}$.
Then to every element $x\in H$ we associate its formal Fourier series
$$
x  \sim \sum_{n\in N}(x,x'_n)x_n.
$$
A natural condition is that this correspondence is one-to-one:
no nonzero vector generates 
zero series, in other words the biorthogonal system $\{x'_n\}$ is also complete. 
Another important property is the possibility to reconstruct  the vector 
$x$ from its Fourier series:
$$
x\in \overline{Span} \, \{(x, x_n') x_n\}. 
$$
If this holds, we say that the system $\{x_n\}_{n\in N}$ is {\it hereditarily complete}.
We will use an equivalent description:
for any partition $N = N_1 \cup N_2$, $N_1 \cap N_2 =\emptyset$,  
the system 
$$
\{x_n\}_{n\in N_1} \cup \{x'_n\}_{n\in N_2}
$$    
is complete in $H$.
In the opposite situation (i.e., when 
$\{x_n\}$ and $\{x'_n\}$ are complete, 
but $\{x_n\}$ is not hereditarily complete) we say
that system is nonhereditarily complete. 

The importance of this notion 
is related to the spectral synthesis problem for linear operators.
If $\{x_n\}$ is the sequence of eigenfunctions
and root functions of some compact operator (with trivial kernel), then the
hereditary completeness of $\{x_n\}$
is equivalent to the possibility of 
the so-called {\it spectral synthesis} for this operator,
i.e., its restriction to any invariant subspace is complete
(see \cite{markus} or \cite[Chapter 4]{hrnik}).

The condition that the biorthogonal system $\{x_n'\}$ is also complete in $H$ is by no means automatic and corresponding examples 
can be easily constructed. It is less trivial to give examples
of the situations where both
$\{x_n\}$ and $\{x'_n\}$ are complete, but the system $\{x_n\}$
fails to be hereditarily complete. In fact, first examples
go back to Hamburger \cite{hamb} who constructed
a compact operator with a complete set of eigenvectors, whose restriction 
to an invariant subspace is a nonzero Volterra operator
(and, hence, is not complete). 
Further examples of nonhereditarily complete systems
were found by Markus \cite{markus} and Nikolski \cite{nik69}, while 
a general approach to constructing nonhereditarily
complete systems was developed by Dovbysh, Nikolski and Sudakov 
\cite{dn, dns}. Any nonhereditarily complete system gives 
an example of an exact system which is not a summation basis.
On the other hand, uniform minimality 
and closeness to an orthonormal system  
may be combined with nonhereditary completeness \cite{dns}.

\subsection{Hereditary completeness for exponential systems}
It is natural to study the problem of hereditary completeness
for special systems in functional spaces, e.g. those which appear 
as families of eigenvectors and root vectors of a certain
operator. Exponential systems form an important class in this respect. 
Let $\Lambda = \{\lambda_n\} \subset \co$ and let 
$e_\lambda (t) = \exp(i\lambda t)$. We consider
the exponential system $\{e_\lambda\}_{\lambda\in\Lambda}$ in $L^2(-a, a)$, $a>0$.
It was shown by Young \cite{young} that, in contrast to the general situation,
for any exact system of exponentials its biorthogonal system is complete.
Another approach to this problem was suggested in \cite{gub},
where it is shown that any exact system of exponentials
is the system of eigenfunctions of the differentiation operator
$i\frac{d}{dx}$ in $L^2(-a, a)$ with a certain generalized boundary 
condition.

Applying the Fourier transform $\mathcal{F}$
one reduces the problem for exponential systems
in $L^2(-\pi, \pi)$
to the same problem for systems of reproducing kernels in 
the Paley--Wiener space
$\pw = \mathcal{F} L^2(-\pi,\pi)$. Recall
that the reproducing kernel of $\pw$ corresponding to a point $\lambda\in\co$
is of the form
$$
K_\lambda(z) = \frac{\sin \pi (z-\overline \lambda)}{\pi(z-\overline 
\lambda)}, \qquad f(\lambda) = (f,K_\lambda)_{\pw}.
$$

Hereditary completeness of exponential systems
is a particular case of the following problem posed by Nikolski:
whether there exist nonhereditarily complete
systems of reproducing kernels in the model subspaces of the 
Hardy space (for the theory of model spaces see \cite{nk12};
the Paley--Wiener space and de Branges spaces are such spaces up to a canonical unitary equivalence).
Let us also recall a related result by  
Olevskii \cite{OLE}: there exists an orthonormal basis 
$\{\phi_n\}$ in $L^2(-\pi, \pi)$
consisting of trigonometric polynomials, for which the approximation 
of functions $f$ by the sums $\sum_{n:\, (f, \phi_n) \ne 0}  c_n \phi_n$ fails in the metric of $C[-\pi, \pi]$ or $L^p(-\pi, \pi)$, $p>2$.

We completely solve the problem of hereditary completeness 
for exponential systems. Namely, we show that  the hereditary completeness 
holds up to a possible one-dimensional defect.
 
Let $\Lambda\subset \CC$ be such that the system of reproducing kernels
$\{K_\lambda\}_{\lambda\in \Lambda}$ is exact 
in the Paley--Wiener space $\pw$. 
Then the biorthogonal system $\{g_\lambda\}_{\lambda\in \Lambda}$ 
is given by 
$$
g_\lambda(z) = \frac{G(z)}{G'(\lambda)(z-\lambda)}, 
$$ 
where $G$ is the so-called generating function of the set 
$\Lambda$.
By the above-mentioned result of 
Young,
$\{g_\lambda\}_{\lambda\in \Lambda}$  is also an exact system.
It is well known that $G$ is a function of exponential 
type $\pi$ and has only simple zeros at the points of $\Lambda$.

\begin{theorem}
\label{main1}
If $\{K_\lambda\}_{\lambda\in \Lambda}$ is exact 
in the Paley--Wiener space $\pw$, 
then for any partition $\Lambda = \Lambda_1 \cup \Lambda_2$, the 
orthogonal complement in $\pw$ to the system
\begin{equation}
\label{syst}
\{g_\lambda\}_{\lambda\in \Lambda_1} \cup
\{K_\lambda\}_{\lambda\in \Lambda_2}
\end{equation}
is at most one-dimensional.
\end{theorem}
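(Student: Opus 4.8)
The plan is to reduce the statement to a concrete analytic problem about functions in $\pw$ and then exploit the special structure of the reproducing kernels $K_\lambda$ and of the biorthogonal functions $g_\lambda$. Suppose $f \in \pw$ is orthogonal to the system \eqref{syst}. Orthogonality to $K_\lambda$ for $\lambda \in \Lambda_2$ means $f(\lambda) = 0$ for all $\lambda \in \Lambda_2$; since $\Lambda$ is the zero set of the generating function $G$ (a function of exponential type $\pi$ with simple zeros), and $\Lambda_2 \subset \Lambda$, the function $f$ is divisible by the ``partial'' generating product $G_2$ built from $\Lambda_2$, so that $f = G_2 h$ in some sense. Orthogonality to $g_\lambda$ for $\lambda \in \Lambda_1$ gives, after using the explicit formula $g_\lambda(z) = G(z)/(G'(\lambda)(z-\lambda))$, a sequence of linear conditions on $f$ indexed by $\Lambda_1$; these can be rewritten, via a residue/contour-integral computation, as vanishing of certain integrals or as an interpolation-type condition involving $f$ and $G_1$ (the partial product over $\Lambda_1$), where $G = G_1 G_2$.

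First I would make precise the factorization $G = G_1 G_2$ and the corresponding membership statements, using that $G$ has exponential type $\pi$ and exponential density controlled by completeness and minimality of $\{K_\lambda\}$; this is the standard generating-function machinery for exact exponential systems. Then I would translate the two families of orthogonality conditions into a single functional equation: roughly, that the ratio $f/G_2$ extends to an entire function whose product with something like $\overline{G_1}$ (or $G_1^*$, the Schwarz reflection) lies in a one-dimensional space, or equivalently that $f$ belongs to the intersection of a model-type subspace with a co-model subspace. The key identity to establish is that the annihilator of \eqref{syst} is isometric (or at least isomorphic) to the space of entire functions $u$ of exponential type $0$ (hence, by a Liouville-type/Phragmén–Lindel\"of argument, constants) satisfying a growth constraint coming from $G_1$ and $G_2$ simultaneously on the two half-planes or along the real line.

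The main obstacle I expect is precisely the step where the two ``halves'' $\Lambda_1$ and $\Lambda_2$ interact: orthogonality to $K_\lambda$ forces vanishing, which is a ``zero set'' (model-space) condition, while orthogonality to $g_\lambda$ is dual to that and produces a condition better described in terms of the biorthogonal/quotient structure; reconciling them requires understanding how $G_1$ and $G_2$ divide functions in $\pw$ from the two sides, and controlling the possible ``loss'' that yields exactly the one extra dimension rather than none. Concretely, I anticipate that after all reductions one is left to show that a certain entire function arising as a transform of $f$ has exponential type $\le 0$ in both half-planes and is bounded on $\RR$, whence it is constant; the whole difficulty is squeezed into establishing these type and boundedness estimates, for which I would use the Krein–type description of $\pw$ as $\he$ for $E = e^{-i\pi z}$, Cartwright-class arguments, and careful bookkeeping of the indicator diagrams of $G_1$ and $G_2$. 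Once that is in place, a nonzero annihilating $f$ is determined up to a scalar by the constant $u$, giving the at-most-one-dimensional conclusion.
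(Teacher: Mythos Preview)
Your initial reduction is sound and matches the paper: writing an annihilating $f$ as $f=G_2 S_2$ encodes the vanishing on $\Lambda_2$, and the orthogonality to $\{g_\lambda\}_{\lambda\in\Lambda_1}$ yields a second entire function $S_1$ via a residue computation, with $G_1 S_1\in\pw+z\pw$. Your expectation that growth/indicator-diagram arguments enter is also correct: one shows $S:=S_1S_2\in\pw+\mathbb{C}\sin\pi z$.

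The gap is in the endgame. You propose to extract from a \emph{single} $f$ an entire function of type $0$, prove it constant, and conclude that this constant determines $f$. I do not see how to make this work, and it is not what the paper does. The auxiliary object $S$ is \emph{not} a constant: its $\pw$ part carries the data $\{|a_n|^2\}$ of the Shannon coefficients of $f$, and no Liouville argument collapses that. There is no evident injective linear map from the annihilator to $\mathbb{C}$.

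The paper instead assumes two linearly independent annihilating vectors $f,h$ with pairs $(S_1,S_2)$, $(T_1,T_2)$ and compares the cross-products: one gets $T_1S_2=Q+a\sin\pi z$ and $S_1T_2=Q+b\sin\pi z$ with the \emph{same} $Q\in\pw$ and constants $a,b$. Growth arguments of the kind you sketch do yield that $a,b$ are constants, but the heart of the proof is showing $a=b=0$, which forces $S_1T_2=T_1S_2$ and hence proportionality of $f,h$. This step uses a device your proposal does not anticipate: the function $S_2=h/G_2$ is \emph{independent of the choice of orthogonal basis} $\{K_{n+\alpha}\}_{n\in\Z}$, while if $a\ne 0$ the zeros $s_m$ of $S_2$ must satisfy $\sum_m|s_m-m-\alpha|^2<\infty$ for the particular $\alpha$ used. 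Running the construction for two different shifts $\alpha$ gives two incompatible $\ell^2$ conditions, a contradiction. The paper explicitly flags this ``two orthogonal bases'' trick as the crucial mechanism; without it (or a genuine substitute), your outline stalls precisely at the point you identify as the main obstacle.
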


Moreover, there are certain obstacles for the existence
of this exceptional one-dimensional complement. 
This can not happen when the sequence $\Lambda_1$ has non-zero
upper density. Given a sequence $\Lambda$ set
$$
D_+(\Lambda) = \limsup_{r\to\infty} \frac{n_r(\Lambda)}{2r},
$$
where $n_r(\Lambda)$ is the usual counting function of the
sequence $\Lambda$, $n_r(\Lambda) ={\rm card}\,\{ {\lambda\in\Lambda, |\lambda|\leq r}\}$.

\begin{theorem}
\label{main2}
Let $\Lambda \subset \mathbb C$, let the system $\{K_\lambda\}_{\lambda\in \Lambda}$ 
be exact in $\pw$,  and let the partition $\Lambda = \Lambda_1 \cup \Lambda_2$
satisfy $D_+(\Lambda_1) >0$.
Then the system \eqref{syst} is complete in $\pw$. 
\end{theorem}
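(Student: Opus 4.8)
The plan is to argue by contradiction: suppose the system \eqref{syst} fails to be complete, so there is a nonzero $f \in \pw$ orthogonal to all of $\{g_\lambda\}_{\lambda\in\Lambda_1}\cup\{K_\lambda\}_{\lambda\in\Lambda_2}$. The orthogonality to $K_\lambda$ for $\lambda \in \Lambda_2$ means $f$ vanishes on $\Lambda_2$, so $f$ is divisible by the generating function $G_2$ of $\Lambda_2$; writing $G = G_1 G_2$ with $G_1, G_2$ the generating functions of $\Lambda_1, \Lambda_2$, we get $f = G_2 h$ for an entire function $h$. The orthogonality to $g_\lambda = G/(G'(\lambda)(z-\lambda))$ for $\lambda \in \Lambda_1$ is the condition that the residue-type pairing vanishes; the standard computation (a contour integral / reproducing kernel manipulation in $\pw$) turns this into the statement that a certain function built from $f$ and $G_1$ is again in $\pw$, or more precisely that $f$, together with the ``Hilbert transform of $f$ against $\Lambda_1$'', has controlled growth. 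The upshot I would aim for is a structural description: $f$ orthogonal to the mixed system forces $f/G_2 \in \pw$ paired appropriately, i.e. $h = f/G_2$ must decay and $G_1 \overline h$-type products stay bounded on $\mathbb R$.

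Next I would translate non-completeness into a growth/size estimate that clashes with $D_+(\Lambda_1) > 0$. The key point is that $\{g_\lambda\}_{\lambda\in\Lambda_1}$ being part of a biorthogonal system of an exact exponential system means $\Lambda_1$, though possibly of density smaller than $1$, still has positive upper density, hence the generating function $G_1$ grows like an exponential of type at least $\pi D_+(\Lambda_1) > 0$ along some sequence of radii — more precisely, its zero counting function forces $\log|G_1(iy)| \gtrsim \pi D_+(\Lambda_1)|y|$ along a sequence $y\to\infty$ (Jensen's formula / the relation between density and type for the Cartwright class). On the other hand, the orthogonality conditions pin $f = G_2 h$ with $f \in L^2(\mathbb R)$ of exponential type $\le \pi$, so $h = G_2^{-1} f$ is of type $\le \pi$ minus the type of $G_2$; combining $G = G_1 G_2$ of type exactly $\pi$, the function $h$ has type $\le$ type of $G_1$. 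I would then show the residue/orthogonality condition against $\Lambda_1$ actually forces $h$ to have type \emph{strictly} less than that of $G_1$ on the relevant half-line, or forces a summability condition $\sum_{\lambda\in\Lambda_1} |f(\lambda)|^2/|G'(\lambda)|^2 < \infty$-type bound that is incompatible with $f$ being large where $G_1$ forces it to be large.

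Concretely, I expect the cleanest route is: (1) write down the orthogonality to $\{g_\lambda\}_{\lambda\in\Lambda_1}$ as $\sum_{\lambda\in\Lambda_1} \overline{f(\lambda)}/\overline{G'(\lambda)}\cdot(\text{something}) = 0$ after pairing with a test reproducing kernel, deducing that the meromorphic function $F(z) = f(z)/G_1(z)$ (which a priori has poles on $\Lambda_1$) is in fact in the Cauchy/Smirnov class of the upper and lower half-planes with suitable boundary values, i.e. $F \in \pw$ up to a polynomial factor, because the principal parts at $\Lambda_1$ telescope against the orthogonality; (2) observe $f = G_1 \cdot (F G_2)$ has type $\le \pi$ while $G_1$ has type $\ge \pi D_+(\Lambda_1)$ and $G_2$ has type $\ge 0$, which is consistent — so the real constraint must come from the $L^2$-membership: $f \in L^2(\mathbb R)$ but $G_1$ is \emph{unbounded} on a sequence on the real line when $D_+(\Lambda_1)>0$ in a way that forces $FG_2$, hence (after using that $FG_2$ is also of Cartwright class) $f$ itself, to vanish identically; (3) conclude $f \equiv 0$, contradiction.

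The main obstacle, I expect, is step (1)–(2): making rigorous that orthogonality to the \emph{partial} biorthogonal family $\{g_\lambda\}_{\lambda\in\Lambda_1}$ (rather than the full one, which would just give $f=0$ by completeness of $\{g_\lambda\}_{\lambda\in\Lambda}$) yields exactly the divisibility statement ``$f/G_1 \in \pw$ in a suitable weak sense'' together with enough decay, and then extracting a contradiction purely from positivity of $D_+(\Lambda_1)$ without any upper density hypothesis on $\Lambda_1$. This is delicate because $\Lambda_1$ need not be a uniqueness set for $\pw$ on its own, so one cannot naively say ``$f$ vanishing to high order on $\Lambda_1$ forces $f\equiv 0$''; instead one needs the sharp interplay between the exponential type of $G$, the type of $G_1$ controlled from below by density, and the $L^2$ bound. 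I would handle this by invoking the Cartwright/Levinson theory: any $f\in\pw$ divisible (in the appropriate Smirnov sense) by $G_1$ with $D_+(\Lambda_1)>0$ must have $f/G_1$ of Cartwright class and of type at most $\pi - \pi D_+(\Lambda_1) < \pi$, and then iterate/bootstrap to force $f \equiv 0$, possibly after first reducing via Theorem~\ref{main1} to the case where only a one-dimensional space is in play and showing that single exceptional direction cannot survive when $D_+(\Lambda_1)>0$.
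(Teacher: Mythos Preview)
Your proposal has a genuine gap at the very first structural step: orthogonality to $g_\lambda$, $\lambda\in\Lambda_1$, does \emph{not} give vanishing of $f$ at $\Lambda_1$, nor any divisibility ``$f/G_1\in\pw$ in a weak sense''. Being orthogonal to $g_\lambda=G/(G'(\lambda)(z-\lambda))$ is a completely different condition from $f(\lambda)=0$; in particular your claim that ``the principal parts at $\Lambda_1$ telescope against the orthogonality'' so that $F=f/G_1$ extends nicely has no content --- $f$ need not vanish at a single point of $\Lambda_1$. The subsequent type arithmetic also fails: $G_1$ can be bounded on $\mathbb R$ even when $D_+(\Lambda_1)>0$ (take $\Lambda_1\subset\mathbb R$ and $G_1$ a sine-type factor), so the step ``$f\in L^2(\mathbb R)$ while $G_1$ is unbounded on $\mathbb R$'' does not go through. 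The suggestion at the end to fall back on Theorem~\ref{main1} and rule out the single exceptional direction is the right \emph{shape}, but nothing in your outline explains how positivity of $D_+(\Lambda_1)$ kills that direction.

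The paper's argument is quite different and hinges on an auxiliary product $S=S_1S_2$ that you never reach. Expanding the orthogonal vector $h=\sum_n\overline{a_n}K_n$ in the Shannon basis, the two orthogonality conditions become
\[
\sum_n\frac{a_nG(n)}{z-n}=\frac{G_1(z)S_1(z)}{\sin\pi z},\qquad
\sum_n\frac{\overline{a_n}(-1)^n}{z-n}=\frac{G_2(z)S_2(z)}{\sin\pi z},
\]
for entire $S_1,S_2$; comparing residues gives $S(n)=S_1(n)S_2(n)=(-1)^n|a_n|^2$, so $\sum_n|S(n)|<\infty$, and after symmetrization $S$ is real on $\mathbb R$ with exactly one zero in each $(n,n+1)$. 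The Levinson theorem applied to $G_1S_1\in\pw+z\pw$ shows (since $D_+(\Lambda_1)>0$) that the zero set $\mathcal Z_1$ of $S_1$ has $D_-(\mathcal Z_1)<1$, whence the zero set $\mathcal Z_2$ of $S_2$ has $D_+(\mathcal Z_2)>0$. The technical heart is Proposition~\ref{closeness}: any real $S\in\pw+\mathbb C\sin\pi z$ with zeros interlacing $\mathbb Z$ and $\sum_n|S(n)|<\infty$ has all but a zero-density set of its zeros within $\delta$ of $\mathbb Z$, for every $\delta>0$. Finally a basis-shift trick closes the argument: the zeros of $S_2=h/G_2$ are basis-independent, so one can replace $\{K_n\}$ by $\{K_{n+\alpha}\}$ with $\alpha$ chosen so that a positive-upper-density subset of $\mathcal Z_2$ stays at distance $\ge 1/4$ from $\mathbb Z+\alpha$; this contradicts Proposition~\ref{closeness}. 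None of the three ingredients --- the product $S$ with $\sum|S(n)|<\infty$, the closeness proposition, or the basis shift --- appears in your outline.
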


Surprisingly, the one-dimensional defect for exponential systems
is still possible.

\begin{theorem}
\label{maincount}
There exist a system of exponentials $\{e^{i\lambda_n t}\}_{n\in \mathbb Z}$, 
$\lambda_n\in \rl$, 
which is complete and minimal in $L^2(-\pi, \pi)$, but
is not hereditarily complete.
\end{theorem}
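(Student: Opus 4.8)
The plan is to transfer the statement to the Paley--Wiener space and then produce a concrete configuration there. Under the Fourier transform, $\{e^{i\lambda_n t}\}_n$ is complete and minimal in $L^2(-\pi,\pi)$ precisely when $\{K_{\lambda_n}\}_n$ is exact in $\pw$, and, by the characterization of hereditary completeness recalled above, $\{e^{i\lambda_n t}\}_n$ fails to be hereditarily complete precisely when some mixed system \eqref{syst} is incomplete. So it suffices to exhibit $\Lambda\subset\RR$ with $\{K_\lambda\}_{\lambda\in\Lambda}$ exact in $\pw$, a partition $\Lambda=\Lambda_1\cup\Lambda_2$, and a nonzero $f\in\pw$ with $f(\lambda)=0$ for $\lambda\in\Lambda_2$ and $f\perp g_\lambda$ for $\lambda\in\Lambda_1$; Theorem \ref{main1} then automatically improves such an incompleteness to a one-dimensional defect.

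Before constructing anything I would record which configurations are \emph{a priori} possible. Such an $f$ lies in $H_0:=\pw\ominus\overline{\mathrm{Span}}\{K_\lambda\}_{\lambda\in\Lambda_2}$; if $\Lambda_1$ were finite then $H_0$ would be finite-dimensional and equal to $\overline{\mathrm{Span}}\{g_\lambda\}_{\lambda\in\Lambda_1}$, which, the $g_\lambda$ being linearly independent, contains no nonzero vector orthogonal to every $g_\lambda$; hence $\Lambda_1$ must be infinite. By Theorem \ref{main2} we also need $D_+(\Lambda_1)=0$, so $\Lambda_1$ is sparse while $\Lambda_2$ carries all the density. Finally, a Riesz basis is always hereditarily complete (the Gram matrix of any subfamily is boundedly invertible), so $\{K_\lambda\}_{\lambda\in\Lambda}$ must fail the Riesz basis property. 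This points to taking $\Lambda$ to be a sparse perturbation of $\ZZ$: pick a rapidly growing $\{n_l\}\subset\ZZ$, set $\Lambda_2=\ZZ\setminus\{n_l\}$ and $\Lambda_1=\{\mu_l\}$ with $\mu_l=n_l+\eps_l$, $0<|\eps_l|<1/2$, and generating function $G(z)=\sin\pi z\cdot\prod_l\frac{z-\mu_l}{z-n_l}$. Then $H_0=\overline{\mathrm{Span}}\{K_{n_l}\}_l$, the $K_{n_l}$ forming an orthonormal family; viewing $\{g_{\mu_l}\}$ as a perturbation of $\{K_{n_l}\}$ one sees, moreover, that the displacements $\{\eps_l\}$ must not be square-summable, as otherwise $\{g_{\mu_l}\}$ would be a Riesz basis of $H_0$ and the defect would vanish.

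Next I would turn the orthogonality relations into equations on the data. Writing $f=\sum_l b_l K_{n_l}$ with $\{b_l\}\in\ell^2$ and computing $g_{\mu_j}(n_l)$ from the explicit $G$ (a residue computation at the removable poles $n_l$ and at the zero $\mu_j$), the relations $\langle f,g_{\mu_j}\rangle=0$, $j\in\N$, take the form $\sum_l\frac{\gamma_l}{n_l-\mu_j}=0$ for all $j$, where $\gamma_l$ is an explicit product of $b_l$, $\eps_l$, and two convergent products over the remaining points. Equivalently, the meromorphic function $\Psi(z):=\sum_l\frac{\gamma_l}{n_l-z}$ must vanish at every $\mu_j$. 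Since $\Psi$ itself depends on $\{\mu_l\}$ and on $\{b_l\}$, this is really a fixed-point problem, and I would solve it by an inductive construction: adjoin the pairs $(n_l,\mu_l)$ one at a time, each new $n_l$ placed so far out that inserting it perturbs $G$, hence all previously built $g_{\mu_j}$ and coefficients $b_j$, by a summable amount; at stage $l$ one chooses $\mu_l$ (a small correction of $n_l$) and $b_l$ so that the $l$-th relation holds exactly, and then passes to the limit.

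The hard part will be the simultaneous control in this iteration. Each newly inserted point changes the generating function and hence perturbs \emph{every} earlier relation, so the back-reaction has to be dominated quantitatively by the sparseness of $\{n_l\}$; at the same time one must keep $\{\eps_l\}\notin\ell^2$ (forced by the non-Riesz requirement, so the displaced points are genuinely displaced, with $|\eps_l|$ not tending to $0$ along a subsequence) while still arranging $\{b_l\}\in\ell^2$, so that the limiting $f$ is a nonzero element of $\pw$; and one must verify that $\Lambda=(\ZZ\setminus\{n_l\})\cup\{\mu_l\}$ is complete and minimal in $\pw$, which for such a sparse perturbation follows from standard facts about generating functions close to sine type. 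This balancing is the quantitative heart of the argument, and it is here that the rigidity of $\pw$, as opposed to a general de Branges space where one has more freedom, makes the estimates tight. Once the iteration is carried out, the resulting $f$ witnesses that $\{e^{i\lambda_n t}\}_n$, with $\{\lambda_n\}=\Lambda$, is complete and minimal but not hereditarily complete, and Theorem \ref{main1} shows the exceptional defect is exactly one-dimensional.
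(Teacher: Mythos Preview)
Your outline correctly isolates the necessary constraints (sparseness of $\Lambda_1$, non-Riesz property, $\{\eps_l\}\notin\ell^2$), and the ansatz $\Lambda_2=\ZZ\setminus\{n_l\}$, $\Lambda_1=\{n_l+\eps_l\}$ is a natural one. But the iterative scheme you describe has a genuine gap. At stage $l$ the ``$l$-th relation'' $\sum_k \gamma_k/(n_k-\mu_l)=0$ involves all the $\gamma_k$, and each $\gamma_k$ depends on \emph{all} the $\mu_m$ through the infinite product $P_k$; so at a finite stage you can only satisfy a truncated version. When you then adjoin stage $l+1$, every earlier $P_k$ acquires a new factor and every previously ``satisfied'' relation is perturbed. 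You observe that this back-reaction is summable if $\{n_l\}$ is sparse, but summability of the perturbations only guarantees that the construction converges, not that the limiting relations vanish: the accumulated errors have no reason to cancel. Without returning at each stage to re-correct all earlier parameters---i.e., without recasting the whole construction as a genuine fixed-point problem on a Banach space and exhibiting a contraction---the scheme does not produce an $f$ orthogonal to the $g_{\mu_j}$. You flag this as ``the quantitative heart,'' which is accurate; but it is essentially the entire content of the theorem, and nothing in the proposal shows it can be closed.

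The paper's route is structurally different. Rather than taking $\Lambda_2\subset\ZZ$, it works through the two-equation framework \eqref{3}--\eqref{4}: it first builds a positive sequence $\{a_n\}\in\ell^1$ so that \emph{both} $h(z)=\sin\pi z\sum_n a_n/(z-n)$ and $S(z)=\sin\pi z\sum_n a_n^2/(z-n)$ vanish at prescribed fixed points $s_k=n_k+\tfrac12$ (Proposition~\ref{technical}). The identity $S(n)=(-1)^n|a_n|^2$ from \eqref{main} couples the two functions and yields two scalar constraints per index $k$, matched by two free parameters (the values $a_{n_k}$ and $a_{n_k+1}$); for $\{n_k\}$ sufficiently sparse this square system is diagonally dominant and a Banach contraction gives the solution in one stroke. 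Only afterwards does one factor $h=G_2S_2$, $S=S_1S_2$ with $S_2$ the canonical product over $\{s_k\}$, and then construct $G_1$ as an explicit lacunary perturbation of $S_2$ so that $G=G_1G_2$ generates an exact system. In particular $\Lambda_2$ ends up as the zero set of $G_2$, interlacing with $\ZZ$ and \emph{not} contained in $\ZZ$. This indirect approach---solve for the pair $(h,S)$ first, then read off $\Lambda$---decouples the problem into a clean contraction and sidesteps the moving-target coupling that blocks your direct iteration.
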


Thus, hereditary completeness may fail even for exponential systems
(reproducing kernels of the Paley--Wiener space), which answers
the question of Nikolski. Further counterexamples will be discussed in
the next subsection.

\subsection{Reproducing kernels of the de Branges spaces}
The above results may be extended to the de Branges spaces.
Let $E$ be an entire function in the Hermite--Biehler class, that is 
$E$ has no zeros on $\mathbb R$, and 
$$
|E(z)| > |E^*(z)|, \qquad z\in {\mathbb{C}_+},
$$
where $E^* (z) = \overline {E (\overline z)}$. 
With any such function we associate the {\it de Branges space}
$\mathcal{H} (E) $ which consists of all entire functions 
$F$ such that $F/E$ and $F^*/E$ restricted to $\mathbb{C_+}$ belong
to the Hardy space $H^2=H^2(\mathbb{C_+})$. 
The inner product in $\he$ is given by
$$
( F,G)_E = \int_\rl \frac{F(t)\overline{G(t)}}{|E(t)|^2} \,dt.
$$
The reproducing kernel of the de Branges space ${\mathcal H} (E)$
corresponding to the point $w\in \mathbb{C}$ is given by
$$
K_w(z)=\frac{\overline{E(w)} E(z) - \overline{E^*(w)} E^*(z)}
{2\pi i(\overline w-z)} 
.
$$

The Hilbert spaces of entire functions $\he$ were introduced by
L. de Branges \cite{br} in connection with inverse 
spectral problems for differential operators.
These spaces are also of a great interest from the function theory 
point of view. The Paley--Wiener space $\mathcal{P}W_a$ 
is the de Branges space corresponding to
$E(z) = \exp(-iaz)$. 

An important characteristics of the de Branges space $\he$
is its phase function, that is, an increasing $C^\infty$-function 
$\phi$ such that $E(t) \exp(i \phi(t)) \in \rl$, $t\in\rl$ 
(thus, essentially, $\phi = - \arg E$ on $\rl$).
Clearly, for $\mathcal{P}W_a$, $\phi(t) = at$.
If $\phi' \in L^\infty(\rl)$ (in which case we say that $\phi$
has sublinear growth), the space $\he$ shares certain properties
with the Paley--Wiener spaces.

A crucial property of the de Branges spaces is the existence of orthogonal bases
of reproducing kernels corresponding to real points \cite{br}.
For $\alpha\in [0, \pi)$ we consider the set of points $t_n\in\mathbb{R}$
such that
\begin{equation}
\label{basis}
\varphi(t_n)=\alpha+\pi n,\qquad n\in\mathbb{Z}.
\end{equation}
Thus, $\{t_n\}$  is the zero set of the function
$e^{i\alpha} E - e^{-i\alpha}E^*$.
It should be mentioned that the points $t_n$ may exist not for all
$n\in \mathbb{Z}$ (e.g., the sequence $\{t_n\}$ may be 
one-sided, that is, $t_n$ may exist only for $n\ge n_0$).
If the points $t_n$ are defined by (\ref{basis}),
then the system of reproducing kernels $\{K_{t_n}\}$
is an orthogonal basis for $\he$ for each $\alpha\in [0, \pi)$
except, may be, one ($\alpha$ is an exceptional value if and only if
$e^{i\alpha} E - e^{-i\alpha}E^* \in \he$).
One should think of the sequence $\{t_n\}$
as of a spectral characteristics of the space $\he$.

The completeness of a system biorthogonal to an exact system 
of reproducing kernels 
was studied in \cite{bb, fric}. In particular, it was shown in \cite{fric}
that such biorthogonal systems are always complete when $\phi'\in L^\infty(\rl)$.
The following extension of this result is obtained in \cite{bb}:
if, for some $N>0$, 
$\phi'(t) = O(|t|^N)$, $|t|\to\infty$, then either 
$e^{i\alpha} E - e^{-i\alpha}E^* \in \he$ for some $\alpha \in [0,\pi)$,
or any system biorthogonal to an exact system of reproducing kernels
is complete in $\he$.

The method of the proof of Theorem \ref{main1}
extends to the case of the de Branges spaces with sublinear growth of the phase function.

\begin{theorem}
\label{main3}
Let $\he$ be a de Branges space such that $\phi'\in L^\infty(\rl)$.
If the system of reproducing kernels 
$\{K_\lambda\}_{\lambda\in \Lambda}$ is exact in $\he$, 
then for any partition $\Lambda = \Lambda_1 \cup \Lambda_2$, 
the orthogonal complement in $\he$ to the system
\begin{equation}
\label{syst3}
\{g_\lambda\}_{\lambda\in \Lambda_1} \cup
\{K_\lambda\}_{\lambda\in \Lambda_2}
\end{equation}
is at most one-dimensional.
\end{theorem}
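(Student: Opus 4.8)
The plan is to repeat the proof of Theorem~\ref{main1} with $\pw$ replaced by $\he$, checking that each analytic step survives when $\phi'\in L^\infty(\mathbb R)$. Fix a partition $\Lambda=\Lambda_1\cup\Lambda_2$ and suppose $f\in\he$ lies in the orthogonal complement of the system~\eqref{syst3}; the goal is to show that the space of such $f$ is at most one-dimensional. Let $G=G_1G_2$ be a factorization of the generating function with $G_j$ entire vanishing exactly on $\Lambda_j$. Orthogonality to $\{K_\lambda\}_{\lambda\in\Lambda_2}$ means $f$ vanishes on $\Lambda_2$, so $f=G_2u$ with $u$ entire; and $f\in\he$ means $f/E,\,f^*/E\in H^2(\mathbb C_+)$, which controls the growth of $u$.

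Next I would rewrite the conditions $f\perp g_\lambda$, $\lambda\in\Lambda_1$. Using $g_\lambda(z)=G(z)/\bigl(G'(\lambda)(z-\lambda)\bigr)$ and the integral form of the inner product of $\he$, one checks that $(f,g_\lambda)_E$ is a nonzero multiple of the value at $\bar\lambda$ of a (suitably regularized) Cauchy-type transform $\mathcal Cf$ of $f$ built from the weight $|E|^{-2}$ and $\overline G$; the regularizing factor $t-\bar\lambda_0$ for a fixed $\lambda_0\in\Lambda$ is needed because $f/E$ and $G/\bigl(E(\cdot-\lambda_0)\bigr)$, but not $G/E$, lie in $H^2(\mathbb C_+)$. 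Hence $f\perp g_\lambda$ for all $\lambda\in\Lambda_1$ is equivalent to $\mathcal Cf$ vanishing on $\overline{\Lambda_1}$. Now $\mathcal Cf$ is holomorphic off $\mathbb R$; splitting $f\overline G/|E|^2$ through the Hardy decomposition ($f/E\in H^2(\mathbb C_+)$ against $\overline{G/E}$, which lies in $H^2(\mathbb C_-)$ up to a one-dimensional correction) identifies $\mathcal Cf$ in $\mathbb C_\pm$ with an explicit meromorphic function. Imposing the vanishing on $\overline{\Lambda_1}$ together with $f=G_2u$ then produces a companion function $\widetilde f\in\he$ and, most importantly, a functional equation of \emph{inhomogeneous} type: an entire function formed from $f$ and $\widetilde f$ equals $c_f\,\Phi_0$, where $\Phi_0$ is a single function not depending on $f$ and $f\mapsto c_f$ is a linear functional on the complement.

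Then I would argue as follows. Since $f\mapsto c_f$ is linear, it is enough to show that the homogeneous equation $c_f=0$ forces $f=0$. For $c_f=0$ the functional equation becomes homogeneous, and coprimality of $G_1$ and $G_2$ (disjoint zero sets) yields a divisibility pinning down $u$ and $\widetilde u$; a growth comparison then finishes it. Indeed $f$ and $\widetilde f$ both have exponential type controlled by that of $E$, exponential type is additive under multiplication, and $G$ has the same type as $E$, so the remaining entire factor is of minimal exponential type and, being bounded on $\mathbb R$ because $f/E$ and $\widetilde f/E$ are, must be constant, hence zero. The hypothesis $\phi'\in L^\infty(\mathbb R)$ enters precisely to keep all of this at the Paley--Wiener level: it ensures that $1/E$ and $1/E^*$ grow at most polynomially along $\mathbb R$, that $E$ has linear indicator, and hence that the convergence of $\mathcal Cf$, the Hardy-space splitting, the boundedness estimates, and the Phragm\'en--Lindel\"of/Krein comparison of types all hold verbatim as for $\pw$; the completeness of $\{g_\lambda\}$, used for the bookkeeping, is likewise available here from the cited results valid when $\phi'\in L^\infty$.

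The main obstacle is the middle step: setting up the Cauchy-transform identity and extracting the inhomogeneous functional equation when $E,E^*$ are genuine entire functions rather than the exponentials $e^{\mp i\pi z}$ --- in particular controlling the one-dimensional gap between $G/E$ and $H^2$, and verifying that the companion $\widetilde f$ genuinely lies in $\he$ with the right growth. Once that is in place the dimension count is formal, which is why the claim is simply that the method of Theorem~\ref{main1} extends.
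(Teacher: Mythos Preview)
Your sketch does not actually follow the paper's proof of Theorem~\ref{main1}, and as written it has a genuine gap at the decisive step. The paper's argument (both for $\pw$ and for $\he$) takes \emph{two} putative independent vectors $f,h$ in the complement, expands them with respect to a de~Branges orthogonal basis $\{K_{t_n}\}$, and produces pairs $(S_1,S_2),(T_1,T_2)$ together with cross-product identities $T_1S_2=Q+aA$, $S_1T_2=Q+bA$, where $Q\in\he$ depends on \emph{both} $f$ and $h$ and $a,b$ are polynomials. The key step---absent from your outline---is the \emph{shift-of-basis trick}: the zeros $s_m$ of $S_2=f/G_2$ are intrinsic (independent of the chosen basis), and if $a\not\equiv0$ one obtains, via a Carleson embedding for $\he$ that holds precisely when $\phi'\in L^\infty$, the convergence $\sum_m(\phi(s_m)-\phi(t_m))^2<\infty$; redoing the expansion with the shifted basis $\{\tilde t_n\}$, $\phi(\tilde t_n)=\phi(t_n)+\delta$, then gives a contradiction. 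Only after $a=b=0$ does the coprimality of $S_1,S_2$ force $f,h$ to be proportional. As the paper stresses, the use of two different orthogonal bases of reproducing kernels is the crucial idea here.

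Your alternative mechanism---a single $f$, a companion $\widetilde f$, and an identity ``$(\text{entire function built from }f,\widetilde f)=c_f\,\Phi_0$'' with $c_f$ \emph{linear} in $f$ and $\Phi_0$ fixed---is not substantiated and does not match the actual structure: the natural product $S=S_1S_2$ satisfies $S(t_n)=|a_n|^2A'(t_n)$, which is sesquilinear in $f$, and the interpolant $Q$ genuinely varies with $f$, so no fixed $\Phi_0$ emerges. Your reading of the hypothesis $\phi'\in L^\infty$ is also off: $\phi$ encodes $\arg E$ on $\mathbb R$, not $|E|$, so this condition implies neither polynomial bounds for $1/E$ on $\mathbb R$ nor anything about the exponential type of $E$. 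In the paper it is used (i) to ensure completeness of the biorthogonal system, (ii) through $\sup_n\phi'(t_n)<\infty$ so that the interpolation data lie in the correct weighted $\ell^2$ and $Q\in\he$ exists, and (iii) for the Carleson embedding needed in the shift-of-basis argument.
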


A crucial step in the proofs of Theorems \ref{main1}
and \ref{main3} is the use of expansions of functions
in $\pw$ or in $\he$ with respect to {\it two different} 
orthogonal bases of reproducing kernels. At first glance 
it may look like an artificial trick; however it should be noted that
the existence of two orthogonal bases of reproducing kernels
is a property which characterizes de Branges spaces 
among all Hilbert spaces of entire functions (see \cite{BMS, BMS1}).
Therefore, we believe this method to be intrinsically connected 
with the deep and complicated geometry of de Branges spaces.

As in the Paley--Wiener case, there are obstacles to the existence of the one-dimensional complement.
Here we give just a result for one-component inner functions $E^*/E$ (see, for instance, \cite{alex}) of special type.

\begin{theorem} 
\label{dop7}
Let $\he$ be a de Branges space such that $\phi'\in L^\infty(\rl)$,
$$
\sup_x\frac{|\phi(2x)|}{|\phi(x)|+1}<\infty,
$$
and
$$
\Bigl|\frac{\phi'(a)}{\phi'(b)}\Bigr|\le c\text{\ \ if \ \ }
\frac12\le \frac{\phi(a)}{\phi(b)}\le 2.
$$
Let $\Lambda\subset\mathbb R$, let the system of reproducing kernels 
$\{K_\lambda\}_{\lambda\in \Lambda}$ be exact in $\he$, 
and let the partition $\Lambda = \Lambda_1 \cup \Lambda_2$
satisfy 
$$
D^\phi_+(\Lambda_1)=\limsup_{r\to\infty} \frac{n_r(\Lambda)}{\phi(r)-\phi(-r)}>0.
$$
Then the system \eqref{syst3} is complete in $\he$. 
\end{theorem}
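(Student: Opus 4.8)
The plan is to argue by contradiction and to reduce the statement to a sharp density (uniqueness) theorem for $\he$, in the spirit of the proof of Theorem \ref{main2}; the three hypotheses on $\phi$ and on the inner function $E^*/E$ enter precisely at that last step, in order to transfer the classical Paley--Wiener density theory to the present generality.

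First I would assume that the system \eqref{syst3} is not complete. Since $\phi'\in L^\infty(\rl)$, Theorem \ref{main3} applies, so the orthogonal complement of \eqref{syst3} is then \emph{exactly} one-dimensional, spanned by some $h\in\he\setminus\{0\}$. From $h\perp K_\lambda$, $\lambda\in\Lambda_2$, one reads off that $h$ vanishes on $\Lambda_2$. The remaining relations $h\perp g_\lambda$, $\lambda\in\Lambda_1$, I would rewrite using $g_\lambda(z)=G(z)/(G'(\lambda)(z-\lambda))$, where $G$ is the generating function of $\Lambda$, together with the identity $\overline{G(t)}/|E(t)|^2=G^*(t)/(E(t)E^*(t))$ on $\rl$:
\[
\overline{G'(\lambda)}\,(h,g_\lambda)_E=\int_{\rl}\frac{h(t)\,\overline{G(t)}}{(t-\lambda)\,|E(t)|^2}\,dt=0,\qquad \lambda\in\Lambda_1 .
\]
Thus the (generalized) Cauchy transform $\Psi$ of $h\overline G/|E|^2$ vanishes on $\Lambda_1$, while by the Sokhotski--Plemelj formulas its jump across $\rl$ is a constant multiple of $h\overline G/|E|^2$, which vanishes on $\Lambda_2$ because $h$ does. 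Decomposing $h/E\in H^2(\cp)$ and $G/E\in N^+(\cp)$ into their Smirnov factors and isolating the analytic part of $\Psi$, I expect to extract a second nonzero function $h_1$, with $h_1/E$ and $h_1^*/E$ of bounded type and nonpositive mean type in $\cp$, vanishing on $\Lambda_1$, and linked to $h$ by a divisor relation whose total zero set is $\Lambda$ (morally $h\,h_1$ equals $G$ up to a zero-free bounded-type factor, so in fact $h$ vanishes exactly on $\Lambda_2$ and $h_1$ exactly on $\Lambda_1$). A delicate point already at this stage is that $G$ is only a \emph{borderline} generating function — in general $G/E\notin H^2$, merely $G/E\in N^+$ with $|G/E|$ in weak-$L^2$ — so the singular-integral manipulation and the bookkeeping of Smirnov-class and inner factors must be done carefully, and it is here that the exact space containing $h_1$ gets pinned down.

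It then remains to derive a contradiction from the existence of such an $h_1$. This should follow from a density/uniqueness argument for $\he$ of the type underlying Theorem \ref{main2}: a nonzero $h_1$ in the space just described, vanishing on $\Lambda_1$ and being a divisor of the generating function $G$ of the \emph{complete} minimal system $\{K_\lambda\}_{\lambda\in\Lambda}$ (which in particular forces $G\notin\he$), cannot coexist with $D^\phi_+(\Lambda_1)>0$. For $\pw$ ($\phi(t)=at$) this is exactly the density computation behind Theorem \ref{main2}. In general the natural comparison scale is $\phi(r)-\phi(-r)$, and transferring the argument needs all three hypotheses: the one-component assumption on $E^*/E$, which gives a workable description of $|E|$ on $\rl$ together with good embedding and Bernstein-type inequalities for $\he$; the doubling bound $\sup_x|\phi(2x)|/(|\phi(x)|+1)<\infty$, which makes $\phi(r)-\phi(-r)$ a regular comparison function; and the block estimate $|\phi'(a)/\phi'(b)|\le c$ for $\frac12\le\phi(a)/\phi(b)\le 2$, which controls the oscillation of $|E|$ between consecutive level sets $\{\phi=\alpha+\pi n\}$ — the natural ``integer lattice'' of $\he$, which carry the orthogonal bases $\{K_{t_n}\}$ of reproducing kernels. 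Under these hypotheses I expect a dyadic decomposition of $\rl$ along the level sets of $\phi$ to reduce the density estimate, block by block, to the Paley--Wiener case, after which the contradiction follows.

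The hard part will be this last step and its prerequisite: establishing the sharp $\phi$-density uniqueness theorem for $\he$ under the stated hypotheses (with the dyadic-block reduction made quantitative), and making the Cauchy-transform construction of $h_1$ rigorous despite $G$ being only a borderline generating function. The one-component and doubling assumptions are imposed exactly so that the classical density theory survives the passage from $\pw$ to $\he$; the lack of such structural control is what leaves the general de Branges obstruction problem open.
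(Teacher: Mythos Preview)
Your plan diverges from the paper's argument and, as stated, has a genuine gap. The paper does \emph{not} appeal to a ``sharp $\phi$-density uniqueness theorem'' of the form ``a nonzero $h_1$ vanishing on $\Lambda_1$ cannot coexist with $D^\phi_+(\Lambda_1)>0$''. No such theorem is available: a set of merely positive (not full) density is never a uniqueness set for $\he$, so the contradiction you are hoping for at the last step simply does not follow. In the Paley--Wiener proof of Theorem~\ref{main2}, the Levinson step does \emph{not} produce a contradiction by itself; it only yields $D_-(\mathcal{Z}_1)<1$ and hence $D_+(\mathcal{Z}_2)>0$, after which a completely different mechanism (Proposition~\ref{closeness}) is invoked. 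Your Cauchy-transform construction of $h_1$ is also unnecessary: the factorisation $h=G_2S_2$, $G_1S_1\in\he+z\he$ is already set up in Subsection~\ref{51} (equations \eqref{ext1}--\eqref{ext2}), and $G_1S_1$ \emph{is} the function vanishing on $\Lambda_1$ you are trying to build.

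The paper's actual route is the de~Branges analogue of the proof of Theorem~\ref{main2}, using the machinery of Subsection~\ref{51} and Corollary~\ref{dens1}. One takes the pair $(S_1,S_2)$ real on $\rl$; by Lemma~\ref{zeros2} the zeros of $S=S_1S_2$ interlace (up to a polynomial factor) with the basis points $\{t_n\}$. From $G_1S_1\in\he+z\he$ and $D^\phi_+(\Lambda_1)>0$ one deduces, via a Levinson-type count in the $\phi$-scale, that the zero set $\mathcal{Z}_2$ of $S_2$ has positive upper $\phi$-density. Now the key observation is that $S_2=h/G_2$ is \emph{basis-independent}, so one may pass to another de~Branges orthogonal basis $\{K_{\tilde t_n}\}$ for which a positive-density portion of $\mathcal{Z}_2$ stays uniformly away from $\{\tilde t_n\}$. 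The three hypotheses on $\phi$ are used here, but not as you describe: they translate directly into the regularity conditions on $\{t_n\}$ required by Corollary~\ref{dens1} (the doubling bound gives $|t_{an}|\ge\rho|t_n|$, and the block estimate on $\phi'$ gives $|I_k|\asymp|I_n|$ for $n\le k\le 2n$). Corollary~\ref{dens1} then says that all but a zero-density set of zeros of $S$ are $\delta|I_n|$-close to $\{\tilde t_n\}$, contradicting the previous sentence. The heart of the argument is thus the closeness result Proposition~\ref{closeness1}/Corollary~\ref{dens1} together with the basis-shift trick, neither of which appears in your outline.
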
 

Furthermore, we show that nonhereditary completeness for reproducing kernels
is possible in many de Branges spaces. Namely, we construct
such examples under some mild restrictions on the spectrum $\{t_n\}$
(including, e.g., all power growth spectra $|t_n| = |n|^\gamma$, $\gamma>0$, 
$n \in \mathbb{N}$ or $n \in \mathbb{Z}$).

\begin{theorem} 
\label{example}
Let $\{t_n\}$ be a sequence of real points such that
$t_n <t_{n+1}$ and $|t_n| \to \infty$, $n\to\infty$. Assume that
for some $N>0$, $c>0$, we have 
\begin{equation}
\label{hypot}
c|t_n|^{-N} \le t_{n+1}-t_n =o(|t_n|) , \qquad |n|\to\infty.
\end{equation}
Then there exists a de Branges space $\he$ such that 
$\{t_n\}$ is the zero set of the function $E + E^* \notin \he$
and there is an exact system of reproducing kernels $\{K_\lambda\}$ 
in $\he$ such that its biorthogonal system is complete,  but the 
original system $\{K_\lambda\}$ is nonhereditarily complete.
\end{theorem}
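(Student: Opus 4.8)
The plan is to reverse-engineer the one-dimensional defect that Theorems~\ref{main2} and~\ref{dop7} rule out under extra hypotheses: we build \emph{simultaneously} the Hermite--Biehler function $E$, a perturbation $\Lambda$ of $\{t_n\}$, a partition $\Lambda=\Lambda_1\cup\Lambda_2$ with generating function $G=G_1G_2$, and a nonzero $f\in\he$ orthogonal to the whole mixed system~\eqref{syst3}. Granting this, and granting that $\{K_\lambda\}_{\lambda\in\Lambda}$ and its biorthogonal $\{g_\lambda\}_{\lambda\in\Lambda}$ are both complete, the existence of such an $f$ is exactly the failure of hereditary completeness for this partition.

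To construct $\he$, let $A=\tfrac12(E+E^*)$ be the canonical product of minimal genus with simple zeros precisely at $\{t_n\}$. The lower bound $t_{n+1}-t_n\ge c|t_n|^{-N}$ yields a polynomial lower estimate for $|A|$ off its zero set, and the upper bound $t_{n+1}-t_n=o(|t_n|)$ controls the counting function of $\{t_n\}$, hence the growth of $A$ and of the phase $\phi$ normalized by $\phi(t_n)=\tfrac\pi2+\pi n$. Choosing a real entire $B$ so that $E=A-iB$ is Hermite--Biehler with this phase and with $|E(t)|$ of the order of $(t_{n+1}-t_n)^{-1}$ near $t_n$, we get the orthogonal basis $\{K_{t_n}\}_{n\in\ZZ}$ from~\eqref{basis} with $\alpha=\pi/2$, while $A\notin\he$ because $\int_\RR|A/E|^2=\int_\RR\cos^2\phi=+\infty$; equivalently $E+E^*\notin\he$, as required.

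The core of the argument is the choice of $\Lambda$, of the splitting, and of $f$. Since $\Lambda_1$ must be \emph{sparse} (a positive density would force completeness of~\eqref{syst3} in the spirit of Theorems~\ref{main2} and~\ref{dop7}), fix a sparse index set $S\subset\ZZ$, move $t_n$ to a nearby $\lambda_n$ for $n\in S$, keep $\lambda_n=t_n$ for $n\notin S$, and set $\Lambda_1=\{\lambda_n\}_{n\in S}$, $\Lambda_2=\{t_n\}_{n\notin S}$; let $G$ be the corresponding canonical product. Seek the defect in the form $f=\widetilde A\,P$, where $\widetilde A=A\big/\prod_{n\in S}(z-t_n)$ and $P$ is an entire factor of controlled growth. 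Then $f$ automatically vanishes on $\Lambda_2$, so $f\perp K_\lambda$ for $\lambda\in\Lambda_2$; we need in addition (i) $f\in\he$, i.e. $\int_\RR\cos^2\phi(t)\,|P(t)|^2\prod_{n\in S}|t-t_n|^{-2}\,dt<\infty$ together with the Smirnov/Hardy membership of $f/E$ and $f^*/E$, which requires the sparse product $\prod_{n\in S}(z-t_n)$ to supply the missing $|t|^{1/2+\eps}$--type decay; and (ii) $(f,g_\lambda)_E=0$ for $\lambda\in\Lambda_1$, which after a contour integration (using $f/E\in H^2(\CC_+)$ and $G^*/E^*$ in the conjugate Hardy class) turns into a system of scalar relations among $P$, the residues of $f/G$ at $\Lambda_1$, and the displacements $\lambda_n-t_n$; these are solved by calibrating the perturbed points $\{\lambda_n\}_{n\in S}$.

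It remains to verify the two completeness statements: completeness of $\{K_\lambda\}_{\lambda\in\Lambda}$ via the generating-function criterion (once $G$ carries no superfluous divisor relative to $E$), and completeness of the biorthogonal $\{g_\lambda\}_{\lambda\in\Lambda}$, which I would obtain from the criteria of~\cite{bb, fric}, keeping the perturbation structured --- only a sparse set of zeros is displaced, and by controlled amounts --- so that $\{K_\lambda\}$ stays complete and minimal with complete biorthogonal without being a Riesz basis. Then the nonzero $f$ is orthogonal to~\eqref{syst3}, proving non-hereditary completeness. The main obstacle is precisely the third step: the density of $S$, the sizes of the displacements $\lambda_n-t_n$, and the growth of $P$ are squeezed between several competing constraints --- $f$ must genuinely belong to $\he$, $\Lambda_1$ must have zero density, the biorthogonal must remain complete, and the orthogonality relations in (ii) must be solvable --- and the two-sided gap hypothesis~\eqref{hypot} is exactly what leaves a nonempty window in which all of them hold simultaneously.
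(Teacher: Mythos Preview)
Your outline has the right shape --- build $S_1,S_2,G_1,G_2$ and a defect vector satisfying the coupled interpolation identities \eqref{ext1}--\eqref{ext2} --- but the place you identify as ``the main obstacle'' is precisely where the argument has a genuine gap, and the gap is structural rather than merely technical.

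You fix the de Branges space first (choosing $B$, hence $|E|$, hence the weights $\mu_n=1/\phi'(t_n)$) and only afterwards try to manufacture $\Lambda$, the partition, and $f$. This throws away the one degree of freedom that makes the construction go through. In the paper the order is reversed: one first picks the coefficients $a_n$ and builds $S$ via $S/A=\sum a_n^2/(z-t_n)$; then factors $S=T_0T_1S_2$ with $S_2$ carrying a sparse set of zeros; then perturbs $T_0\to\tilde T_0$ and sets $h=G_2S_2$ with $G_2=\tilde T_0T_1$; and only \emph{then} defines the space by declaring $\mu_n=c_n^2$, where $c_n$ are the Fourier coefficients of $h$ forced by $h/A=\sum c_n|a_n|/(z-t_n)$. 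With this choice the identity \eqref{ext1} holds \emph{by definition} of $\he$, and one is left only with \eqref{ext2c}, which is arranged by a second sparse perturbation $S_2\to G_1$. If instead you fix $\mu_n$ in advance, both \eqref{ext1} and \eqref{ext2c} become genuine nonlinear constraints on $\{a_n\}$, $P$, and the displacements simultaneously, and you give no mechanism (fixed point, implicit function, or otherwise) for solving them.

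A second point: your choice $\Lambda_2=\{t_n\}_{n\notin S}$ forces $G_2$ to share zeros with $A$, so $g_\lambda\in\overline{\mathrm{Span}}\{K_{t_n}\}_{n\in S}$ for every $\lambda\in\Lambda_1$, and the defect condition becomes ``$\{g_\lambda\}_{\lambda\in\Lambda_1}$ is incomplete in this small subspace while $\{g_\lambda\}_{\lambda\in\Lambda}$ remains complete in $\he$''. That tension is real and you do not resolve it. The paper avoids it entirely: its $\Lambda_2$ is the zero set of $\tilde T_0 T_1$, points \emph{interlacing} with $\{t_n\}$, disjoint from the basis nodes.

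Finally, your description of $E$ (``$|E(t)|$ of the order of $(t_{n+1}-t_n)^{-1}$ near $t_n$'') does not pin down the space, and the phrase ``with this phase'' is circular since the phase is determined by $E$, not prescribed to it. The hypothesis \eqref{hypot} is indeed what provides the quantitative room (lower bound on gaps for separating perturbed zeros from $\{t_n\}$, upper bound for the density control), but asserting that ``a nonempty window'' remains does not substitute for exhibiting one.
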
 

We also mention here that recently Burnol \cite{BUR} studied the hereditary completeness property 
of the system $\big\{ \frac{\zeta (s)} {(s-\lambda)^k }\big\}$, where
$\lambda$ are nontrivial zeros
of the Riemann zeta function and $1\le k \le m_\lambda$, $m_\lambda$
being the multiplicity of
$\lambda$. He showed that this system is complete and minimal in
some associated space
of analytic functions, and, moreover, that this system is hereditarily
complete up to a possible
one-dimensional complement. It is not known whether this
one-dimensional defect is really possible,
but in view of our results, the presence of this complement seems to
be a sufficiently general
phenomenon.

The above counterexamples admit an operator-theoretic interpretation.
It was recently shown in \cite[Corollary 2.6]{bd}
that any exact system of reproducing kernels in a de Branges space
is unitarily equivalent to a system of eigenvectors of some rank one 
perturbation of a compact self-adjoint operator:

{\it Let $\he$ be a de Branges space such that
$e^{i\alpha} E - e^{-i\alpha}E^* \notin \he$ for any 
$\alpha \in \mathbb{R}$, and let $\{t_n\}$ be the zero set of 
$E+E^*$, $t_n \ne 0$. Put $s_n = t_n^{-1}$ and let $A$ be a compact selfadjoint 
operator with the spectrum $\{s_n\}$. Then for any 
exact system $\{K_\lambda\}_{\lambda\in \Lambda}$ 
of reproducing kernels in $\he$ 
there exists a bounded rank-one perturbation $L$ of $A$ $($i.e., 
$Lx = Ax + (x, b)\,a$$)$ such that the system $\{K_\lambda\}_{\lambda\in \Lambda}$  
is unitarily equivalent to the system of eigenvectors of 
$L$, while its biorthogonal is 
unitarily equivalent to the system of eigenvectors of
the adjoint operator $L^*$.}

Thus, we have the following corollary of Theorem \ref{example}.

\begin{corollary}
Let $\{s_n\}$ be a sequence of real numbers such that
$s_n \searrow 0$, $n\ge0$, $n\to\infty$, while 
$s_n \nearrow 0$, $n<0$, $n\to - \infty$. 
Assume also that for some $N>0$, $c>0$, we have
$$
c|s_n|^N \le |s_{n+1} - s_n| = o(|s_n|), \qquad |n| \to \infty.
$$
Let $A$ be a compact selfadjoint operator with the spectrum $\{s_n\}$
Then there exists a rank one perturbation $L$ of $A$ (with trivial kernel) such that both $L$
and $L^*$ have complete sets of eigenvectors, but
$L$ does not admit spectral synthesis.
\end{corollary}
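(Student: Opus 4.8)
The plan is to reduce the Corollary to Theorem~\ref{example} by passing to the reciprocal sequence $t_n=s_n^{-1}$, and then to re-interpret the function-theoretic conclusion in operator language by means of the result of \cite{bd} quoted above. So, first I would set $t_n=s_n^{-1}$ and check that $\{t_n\}$ satisfies \eqref{hypot}. Since $|s_n|\to 0$ and, by hypothesis, $s_n\searrow 0$ for $n\ge 0$ while $s_n\nearrow 0$ as $n\to-\infty$, the map $x\mapsto x^{-1}$, which is decreasing on each of the half-lines $(0,\infty)$ and $(-\infty,0)$, turns this into $t_n<t_{n+1}$ for all $n$, $t_n\ne 0$, and $|t_n|\to\infty$ as $|n|\to\infty$. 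For the gaps, $t_{n+1}-t_n=(s_n-s_{n+1})/(s_n s_{n+1})$; the relation $|s_{n+1}-s_n|=o(|s_n|)$ forces $s_{n+1}/s_n\to 1$, so $s_n s_{n+1}\asymp s_n^2$ and $|t_{n+1}-t_n|\asymp |s_{n+1}-s_n|\,|s_n|^{-2}$. The upper estimate $|s_{n+1}-s_n|=o(|s_n|)$ then gives $t_{n+1}-t_n=o(|s_n|^{-1})=o(|t_n|)$, and the lower estimate $|s_{n+1}-s_n|\ge c|s_n|^N$ gives $t_{n+1}-t_n\gtrsim |s_n|^{N-2}=|t_n|^{2-N}\ge c'|t_n|^{-N'}$ for a suitable $N'>0$ and $c'>0$ (after adjusting the constant for the finitely many small $|n|$). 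Hence $\{t_n\}$ satisfies \eqref{hypot}.

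Applying Theorem~\ref{example} to $\{t_n\}$ I would then obtain a de Branges space $\he$ in which $\{t_n\}$ is the zero set of $E+E^*\notin\he$, together with an exact system of reproducing kernels $\{K_\lambda\}_{\lambda\in\Lambda}$ whose biorthogonal $\{g_\lambda\}$ is complete while $\{K_\lambda\}$ is nonhereditarily complete; note that $\{t_n^{-1}\}$ is precisely $\{s_n\}$. Next, let $A$ be a compact selfadjoint operator with point spectrum $\{s_n\}$; since $0$ is only an accumulation point of these eigenvalues, $\ker A=\{0\}$. By the statement of \cite{bd} recalled above, there is a rank-one perturbation $L=A+(\,\cdot\,,b)\,a$, with trivial kernel, such that $\{K_\lambda\}$ is unitarily equivalent to the eigenvector system of $L$ and $\{g_\lambda\}$ to the eigenvector system of $L^*$. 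Unitary equivalence preserves completeness, hereditary completeness, and the property of admitting spectral synthesis, so: the eigenvectors of $L$ are complete (as $\{K_\lambda\}$ is exact), the eigenvectors of $L^*$ are complete (as $\{g_\lambda\}$ is complete), while $L$ does not admit spectral synthesis, because for the compact operator $L$ with trivial kernel and complete eigensystem $\{K_\lambda\}$ the existence of spectral synthesis is equivalent to hereditary completeness of $\{K_\lambda\}$, which fails. This is exactly the assertion of the Corollary.

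The main obstacle lies in the last step: one must be sure that the space $\he$ furnished by Theorem~\ref{example} actually satisfies the hypothesis of the \cite{bd} dictionary, namely $e^{i\alpha}E-e^{-i\alpha}E^*\notin\he$ for \emph{every} real $\alpha$ — Theorem~\ref{example} supplies this only for the single value of $\alpha$ for which $e^{i\alpha}E-e^{-i\alpha}E^*$ is proportional to $E+E^*$ — and that the resulting perturbation $L$ indeed has trivial kernel. Both facts should be read off from the explicit construction behind Theorem~\ref{example}; the remaining steps are elementary.
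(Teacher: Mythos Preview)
Your proposal is correct and follows exactly the route the paper intends: the paper gives no proof beyond the sentence ``Thus, we have the following corollary of Theorem~\ref{example}'', so the argument is precisely the reduction to Theorem~\ref{example} via $t_n=s_n^{-1}$ together with the quoted result of \cite{bd}, and your verification that $\{t_n\}$ satisfies \eqref{hypot} is accurate.

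The caveat you flag at the end is genuine and worth noting: the \cite{bd} dictionary requires $e^{i\alpha}E-e^{-i\alpha}E^*\notin\he$ for \emph{every} $\alpha$, whereas Theorem~\ref{example} states this only for $E+E^*$. The paper does not spell this out either. It is, however, recoverable from the construction in the proof of Theorem~\ref{example}: there one has $\mu_n=c_n^2$ with $\sum_n\mu_n=\infty$ and $\mu_n\gtrsim|t_n|^{-M}$ (see \eqref{imp3} and the last paragraph of that proof), and since at most one $\alpha\in[0,\pi)$ can be exceptional in any de Branges space (as recalled in the introduction), one only needs to check that the construction excludes an exceptional $\alpha$ different from the one corresponding to $A$. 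This and the triviality of $\ker L$ are indeed features of the explicit construction rather than of the statement of Theorem~\ref{example} alone, so your instinct to point back to the proof is the right one.
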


Throughout the paper the notation $U(z)\lesssim V(z)$ (or equivalently
$V(z)\gtrsim U(z)$) means that there is a constant $C$ such that
$U(z)\leq CV(z)$ holds for all $z$ in the set in question, which may
be a Hilbert space, a set of complex numbers, or a suitable index
set. We write $U(z)\asymp V(z)$ if both $U(z)\lesssim V(z)$ and
$V(z)\lesssim U(z)$.

{\bf Acknowledgements.}
The authors are deeply grateful to Nikolai Nikolski 
who introduced them to the field of the spectral function theory 
and posed the problems studied in the paper. 
His influence on the subject is enormous.

A part of the present work was done when the authors participated 
in the research program "Complex Analysis and Spectral Problems"\,
at Centre de Recerca Matem\`atica, Barcelona. The hospitality
of CRM is greatly appreciated. 
                              

\section{Preliminaries}
\label{prelim}

Note that if $\Lambda=\Lambda_1\cup\Lambda_2$, and one of the sets $\Lambda_1$ or 
$\Lambda_2$
is finite, then the corresponding system \eqref{syst} is complete
by a simple Hilbert space argument. Therefore,
from now on we exclude the case when one of the sets 
$\Lambda_1,\Lambda_2$ is finite.

Let $h \in \pw$ be a function orthogonal to the system (\ref{syst}).
Assume that $\Lambda\cap\zl =\emptyset$
and write the expansion of the vector $h$ 
with respect to the Shannon--Kotelnikov--Whittaker 
orthonormal basis $K_n(z)=\frac{\sin \pi(z-n)}{\pi(z-n)}$,
$$
h(z) = \sum_n \overline{a_n}K_n(z) = \frac{1}{\pi}
\sum_n \overline{a_n} (-1)^n \frac{\sin \pi z}{z-n},
$$
where $a_n = \overline{h(n)}$ and 
$\|h\|^2 = \sum_n |a_n|^2<\infty$. The fact that $h$ is orthogonal to
$\big\{\frac{G(z)}{z-\lambda} \big\}_{\lambda\in \Lambda_1}$ is equivalent to
\begin{equation}
\label{1}
\bigg( \frac{G(z)}{z-\lambda}, h\bigg) = 
\frac{1}{\pi}\sum_n \frac{a_n G(n)}{n-\lambda}=0, \qquad \lambda\in \Lambda_1,
\end{equation}
while $(h, K_\lambda ) = 0$, $\lambda\in \Lambda_2$,  implies that 
\begin{equation}
\label{2}
\sum_n \frac{\overline{a_n} (-1)^n}{\lambda-n} = 0, \qquad \lambda\in \Lambda_2.
\end{equation}

Without loss of generality we may assume 
that $h$ does not vanish at integers, that is, $a_n \ne 0$, $n\in\zl$. 
Otherwise we can expand $h$ with respect to
some other basis $\{K_{n+\alpha}\}$, $\alpha\in (0,1)$.

Now let $G_2$ be an entire function of genus $1$ 
with the zero set $\Lambda_2$, and let $G_1=G/G_2$. 
The function $G_2$ is defined uniquely up to 
an exponential factor $e^{\gamma_1+\gamma z}$. 
Note that the zeros of $G$ satisfy 
the Blaschke condition in $\mathbb{C}_+$
and in $\mathbb{C}_-$. Therefore, we may choose
$\gamma$ such that $G_2^*/G_2 = B_1/B_2$ for some Blaschke products $B_1$
and $B_2$. Hence $G_1^*/G_1$ is the ratio of two Blaschke products 
as well, since $G^*/G$ is of this form for any generating
function $G$ of a complete minimal system of reproducing kernels.

We can rewrite conditions (\ref{1})--(\ref{2}) as
\begin{equation}
\label{3}
\sum_n \frac{a_n G(n)}{z-n} = \frac{G_1(z)S_1(z)}{\sin \pi z},
\end{equation}
\begin{equation}
\label{4}
\sum_n \frac{\overline{a_n} (-1)^n}{z-n} = \frac{G_2(z)S_2(z)}{\sin \pi z},
\end{equation}
where $S_1$ and $S_2$ are some entire functions.

The pairs $(S_1, S_2)$ of entire functions satisfying 
(\ref{3})--(\ref{4}) parametrize all functions orthogonal to (\ref{syst}).
We will denote the set of such pairs by $\Sigma(\Lambda_1, \Lambda_2)$.
Note that {\it the function $S_2 = h/G_2$ does not depend on the choice
of the orthogonal basis $\{k_{n+\alpha}\}$} (we will use this fact repeatedly), 
while $S_1$ will depend on the choice of the basis.

Comparing the residues at $n$
we get
\begin{equation}
S_1(n) = (-1)^n a_n G_2(n), \qquad G_2(n)S_2(n) =\overline{a_n}.
\label{residues}
\end{equation}
Put $S = S_1S_2$. Then
\begin{equation}
\label{main}
S(n) = S_1(n)S_2(n) = (-1)^n |a_n|^2.
\end{equation}

\begin{lemma}
\label{g1s1inpw}
The function $G_1S_1$ is in $\pw + z\pw$.
\end{lemma}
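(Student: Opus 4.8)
The plan is to start from the identity \eqref{3}, which says
$$
\sum_n \frac{a_n G(n)}{z-n} = \frac{G_1(z)S_1(z)}{\sin\pi z},
$$
and to read off from it that $G_1 S_1$ lies in $\pw + z\pw$. First I would observe that the left-hand side is the Cauchy-type series $\sum_n \frac{b_n}{z-n}$ with $b_n = a_n G(n)$, and that multiplying through by $\sin\pi z$ gives
$$
G_1(z)S_1(z) = \sin\pi z \cdot \sum_n \frac{a_n G(n)}{z-n} = \frac{1}{\pi}\sum_n (-1)^n\, a_n G(n)\, \frac{\sin\pi(z-n)}{z-n}\cdot(-1)^n \cdot\pi,
$$
i.e. up to harmless constants $G_1 S_1$ is the interpolation series $\sum_n c_n K_n(z)$ with $c_n = (-1)^n a_n G(n)$ (after the usual bookkeeping with $\sin\pi z = (-1)^n\sin\pi(z-n)$). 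So the whole point is to show that this interpolation series converges in $\pw+z\pw$, equivalently that the coefficient sequence $\{c_n\}$ has suitable growth.

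The key step is the size estimate on $c_n = (-1)^n a_n G(n)$. We know $\{a_n\}\in \ell^2$ since $a_n = \overline{h(n)}$ and $h\in\pw$. The issue is the factor $G(n)$, which need not be bounded. Here I would use \eqref{residues}: $G_2(n)S_2(n) = \overline{a_n}$, so $G(n) = G_1(n)G_2(n)$ and hence $c_n = (-1)^n a_n G_1(n)G_2(n)$. Combined with $G_2(n) = \overline{a_n}/S_2(n)$ this is getting circular, so instead I would argue directly on growth: since $G$ has exponential type $\pi$ and its zeros satisfy the Blaschke condition in $\mathbb C_\pm$, a standard estimate (Plancherel--Pólya type, or the fact that $G/\sin\pi z$ and $\sin\pi z/G$ have controlled growth along $\mathbb R$) gives $|G(n)| \lesssim (1+|n|)$ along the integers — more precisely $G(z)/(\sin\pi z)$ and its reciprocal behave polynomially, and for a generating function of a \emph{complete minimal} system one has $|G(x)| \lesssim 1+|x|$ on the real line after normalizing the exponential factor. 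Granting $|G(n)|\lesssim 1+|n|$, we get $\sum_n |c_n|^2/(1+|n|)^2 \le \sum_n |a_n|^2 < \infty$, which is exactly the condition for $\sum_n c_n K_n$ to converge in $\pw + z\pw$: indeed $K_n(z) \in \pw$, $zK_n(z)\in \pw + z\pw$, and writing $c_n = c_n'+n c_n''$ with $\{c_n'\},\{c_n''\}\in\ell^2$ (split according to $|n|\le 1$ or not) expresses the series as an element of $\pw + z\pw$.

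The main obstacle I expect is precisely justifying the bound $|G(n)|\lesssim 1+|n|$ and the convergence of the series in the right space — one must be careful that $G_1 S_1$, a priori only known to be entire via \eqref{3}, is genuinely the sum of an $\ell^2$-weighted interpolation series and not merely formally equal to it. I would handle this by first checking that $\frac{G_1(z)S_1(z)}{\sin\pi z}$, being equal to the Cauchy series on the left of \eqref{3}, tends to $0$ as $|\mathrm{Im}\, z|\to\infty$ and is $O(1)$ on horizontal lines away from $\mathbb R$; this forces $G_1 S_1$ to have exponential type $\le \pi$ in each half-plane, and together with the real-line bound $|G_1(x)S_1(x)|\lesssim (1+|x|)\|h\|$ coming from Cauchy--Schwarz applied to $\sum_n \frac{a_n G(n)}{x-n}$, a Phragmén--Lindelöf argument gives $G_1 S_1 \in \pw + z\pw$. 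The remaining bookkeeping — extracting the $\ell^2$ pieces and identifying the limit — is routine.
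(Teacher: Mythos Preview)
Your overall strategy---use \eqref{3} together with growth information on $G(n)$---is the right one and matches the paper's, but two of your key steps are not correct as written.

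First, the pointwise bound $|G(n)|\lesssim 1+|n|$ is false in general. A generating function of an exact system satisfies $G\in\pw+z\pw$, which gives only the $\ell^2$ condition $\{G(n)(1+|n|)^{-1}\}\in\ell^2$; there is no reason for $G(n)/(1+|n|)$ to be bounded (take $G(z)=(z-\lambda)f(z)$ with $f\in\pw$ unbounded on $\mathbb Z$). Fortunately the weaker fact suffices for what you need: since $\{a_n\}\in\ell^2$ is in particular bounded, the product $\{a_nG(n)(1+|n|)^{-1}\}$ is still in $\ell^2$.

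Second, even granting $\{c_n/(1+|n|)\}\in\ell^2$ for $c_n=(-1)^na_nG(n)$, your passage to $G_1S_1\in\pw+z\pw$ via ``splitting the coefficients'' is incomplete: values at integers do not determine membership in $\pw+z\pw$ (e.g.\ $z\sin\pi z$ vanishes on $\mathbb Z$ yet lies outside $\pw+z\pw$). Your alternative Cauchy--Schwarz estimate $|G_1(x)S_1(x)|\lesssim(1+|x|)\|h\|$ on the real line would require $\sum_n|G(n)|^2/|x-n|^2\lesssim(1+|x|)^2$ uniformly in $x$, which again does not follow from the available $\ell^2$ information on $G(n)$.

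The paper avoids both difficulties with a one-line trick: pick any zero $w$ of $G_1S_1$ (say $w\in\Lambda_1$, so $w\notin\mathbb Z$). Since the Cauchy series in \eqref{3} vanishes at $w$, partial fractions give
\[
\frac{G_1(z)S_1(z)}{z-w}=\sin\pi z\,\sum_n\frac{a_nG(n)}{(n-w)(z-n)},
\]
and now the coefficients $a_nG(n)/(n-w)$ lie in $\ell^2$ (bounded sequence times an $\ell^2$ sequence), so the right-hand side is in $\pw$ outright. Hence $G_1S_1\in(z-w)\pw\subset\pw+z\pw$. This is both shorter and cleaner than the splitting you propose, and it uses exactly the correct $\ell^2$ bound on $\{G(n)/(1+|n|)\}$ rather than the false pointwise one.
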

\begin{proof}
If $w$ is a zero of $G_1S_1$, then it follows from 
\eqref{3} and the inclusion $\{G(n)(1+|n|)^{-1}\}_n\in\ell^2$ that
\begin{equation}
\label{g-one}
 \frac{G_1(z)S_1(z)}{z-w}=\sin \pi z \sum_n\frac{a_nG(n)}{(n-w)(z-n)}\in \pw.
\end{equation}
\end{proof}

In what follows we denote by $\pw + \mathbb{C}\sin\pi z$
the class of functions of the form $f +c\sin \pi z$, where 
$f\in \pw$, $c\in \mathbb{C}$. 

\begin{lemma}
\label{growth1}
Let $h\in\pw$ be orthogonal to some system of the form (\ref{syst})
and let $(S_1, S_2) \in \Sigma(\Lambda_1, \Lambda_2)$.
Then $S \in \pw + \mathbb{C}\sin\pi z$.
\end{lemma}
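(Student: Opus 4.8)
My plan is to reduce the statement to two growth facts about $S = S_1 S_2$: that $S$ is an entire function of exponential type at most $\pi$, and that $S$ is bounded on $\RR$. Granting these, the lemma follows by a routine interpolation argument. Indeed, by \eqref{main} we have $S(n) = (-1)^n|a_n|^2$ and $\sum_n |a_n|^2 = \|h\|^2 < \infty$, so the function $s(z) := \frac{\sin \pi z}{\pi}\sum_n \frac{|a_n|^2}{z-n}$ belongs to $\pw$ and agrees with $S$ at every integer. Hence $\Psi := S - s$ is an entire function of exponential type at most $\pi$, bounded on $\RR$ (both $S$ and $s$ are), and vanishing on $\ZZ$; therefore $\Psi/\sin\pi z$ is entire, and since by Phragmén--Lindel\"of $|\Psi(z)| \lesssim e^{\pi|\im z|}$ while $|\sin\pi z| \gtrsim e^{\pi|\im z|}$ outside a fixed neighbourhood of $\ZZ$, the quotient $\Psi/\sin\pi z$ is bounded off that neighbourhood, hence bounded everywhere (by the maximum principle on the small discs around integers, using boundedness of $S$ on $\RR$), hence constant. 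Thus $S = s + c\sin\pi z \in \pw + \CC\sin\pi z$.

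So the content is in the two growth estimates, and for these I would start from \eqref{3}--\eqref{4}. By Lemma \ref{g1s1inpw}, $G_1 S_1 \in \pw + z\pw$; the same reasoning applied to \eqref{4}, or simply the observation that the right-hand side of \eqref{2} equals $\pi h(z)/\sin \pi z$, gives $G_2 S_2 = \pi h \in \pw$. Since $S_1$ and $S_2$ are entire, $S = S_1 S_2 = \frac{(G_1 S_1)(G_2 S_2)}{G_1 G_2} = \pi\,\frac{(G_1 S_1)\,h}{G}$ is entire. For the exponential type, note that $G_1 S_1$ and $h$ are of exponential type at most $\pi$ and are polynomially bounded on $\RR$, so by Phragmén--Lindel\"of in the half-planes their indicator diagrams lie in $[-i\pi, i\pi]$, whence the indicator diagram of $(G_1 S_1)h$ lies in $[-2i\pi, 2i\pi]$; dividing by the generating function $G$, which is of exponential type exactly $\pi$ with indicator diagram $[-i\pi, i\pi]$ and has sufficiently regular growth, one obtains that $S$ has exponential type at most $\pi$.

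The boundedness of $S$ on $\RR$ is the main obstacle. On $\RR$ one has $|S(x)| = \pi|G_1 S_1(x)|\,|h(x)|\,|G(x)|^{-1}$ with $|G_1 S_1(x)| \lesssim 1 + |x|$ and $|h(x)| \lesssim 1$, so everything comes down to a lower bound for $|G|$; the delicate point is that $G$ may vanish on $\RR$ (when $\Lambda \subset \RR$). The key observation is that the quotient is regular: $G_1 S_1$ vanishes on $\Lambda_1$ to the order of $G_1$, while $h$ vanishes on $\Lambda_2$ since $h \perp K_\lambda$ for $\lambda \in \Lambda_2$, so $\pi(G_1 S_1)h/G = S_1 \cdot (h/G_2)$ has no singularities; near a point of $\Lambda_1$ (resp. $\Lambda_2$) the quotient is controlled by the local behaviour together with the nonvanishing factor $G_2$ (resp. $G_1$), while away from $\Lambda$ one uses the lower bounds for $|G|$ available for generating functions of complete and minimal systems of reproducing kernels. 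Combining the local and global estimates yields $\sup_\RR |S| < \infty$, which completes the proof. An alternative, and perhaps cleaner, route is to argue directly with the meromorphic function $S/\sin\pi z$, which has only the simple poles of $1/\sin\pi z$, with residues $|a_n|^2/\pi$ at $n$: using that $e^{i\pi z}G$ and $e^{-i\pi z}G$ lie in the Smirnov classes $N^+(\mathbb{C}_+)$ and $N^+(\mathbb{C}_-)$ respectively (which holds because $g_\lambda \in \pw$), together with $G_1 S_1 \in \pw + z\pw$ and $h \in \pw$, one checks that $S/\sin\pi z - \frac{1}{\pi}\sum_n \frac{|a_n|^2}{z-n}$ is of bounded type and bounded near $\infty$ in each half-plane, hence constant.
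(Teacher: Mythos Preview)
Your interpolation framework is fine and matches the paper's first move: both you and the authors introduce the function $Q\in\pw$ with $Q(n)=(-1)^n|a_n|^2$ and aim to show that $S-Q$ is a constant multiple of $\sin\pi z$. The divergence is in how this last step is carried out.

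The real gap is your claim that $S$ is bounded on $\RR$. Your estimate $|S(x)|\lesssim(1+|x|)|G(x)|^{-1}$ is correct, but the assertion that ``away from $\Lambda$ one uses the lower bounds for $|G|$ available for generating functions'' is not justified: generating functions of exact systems in $\pw$ need not satisfy $|G(x)|\gtrsim 1+|x|$ off their zero set, they are not sine-type in general. Local cancellation near the zeros of $G$ (which you describe correctly) does not help at points where $G$ is small without vanishing. Your alternative Smirnov-class route runs into the same obstacle: to control $S/\sin\pi z$ in a half-plane you again need a lower bound on $|G|$, and ``bounded near $\infty$'' is asserted but not proved. Note too that your argument never invokes the hypothesis that $\Lambda$ is a \emph{uniqueness} set for $\pw$; this is a warning sign, since the paper's proof uses it in an essential way.

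The paper sidesteps the boundedness issue entirely. Instead of dividing by $G$, it multiplies: from $G_2S_2=h\in\pw$ and $G_1S_1\in\pw+z\pw$ one gets $GS\in\ppw+z\ppw$, hence $G(S-Q)=GH\sin\pi z\in\ppw+z\ppw$ with $H$ entire. Dividing by $\sin\pi z$ (whose growth is completely regular, unlike that of $G$) gives $GH\in\pw+z\pw$. Then, since $G$ has type exactly $\pi$ and $G^*/G$ is a ratio of Blaschke products, $H$ is of zero exponential type; and if $H$ had a zero $z_1$ then $GH/(z-z_1)\in\pw$ would vanish on $\Lambda$, contradicting completeness. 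So $H$ is constant. The point is that multiplying by $G$ trades a hopeless lower bound on $|G|$ for an easy upper bound, and the uniqueness of $\Lambda$ does the rest.
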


\begin{proof}
Consider the function $Q\in \pw$ 
which solves the interpolation problem
$Q(n) = (-1)^n |a_n|^2$, $n\in\zl$ 
(where $a_n$ are the coefficients in the expansion $h=\sum_n 
\overline a_n K_n$) and put $\tilde S = S-Q$. Then $\tilde S$
vanishes on $\zl$ and so $\tilde S(z) = H(z) \sin \pi z$.
It remains to show that $H$ is a constant.
Note that $G_2S_2 = h\in \pw$ and, by Lemma~\ref{g1s1inpw}, 
$G_1S_1 \in \pw + z\pw$. Hence,
\begin{equation}
\label{proizv}
GS
\in \mathcal{P}W_{2\pi} +
z\mathcal{P}W_{2\pi},
\end{equation}
and, since $G\in \pw+z\pw$ and $GQ \in \ppw+z\ppw$, also
$$
G(z)\tilde S(z)  = G(z) H(z) \sin\pi z  
\in \mathcal{P}W_{2\pi} +
z\mathcal{P}W_{2\pi}.
$$
We may divide by $\sin\pi z$,  
and so
$$
G H  \in \pw+z\pw.
$$
Since $G$ is an entire function of exponential type $\pi$, we conclude that $H$
is of zero exponential type. Now if $H$ has at least one zero $z_1$,
we conclude that $\frac{H(z)G(z)}{z-z_1} \in \pw$
which contradicts the fact that $\Lambda$ is a uniqueness set for 
the Paley--Wiener space. Thus, $H$ is a constant.
\end{proof}

\begin{lemma}
\label{noexp}
Let $h\in\pw$ be orthogonal to some system of the form (\ref{syst})
and let $(S_1, S_2) \in \Sigma(\Lambda_1, \Lambda_2)$. 
Then both functions $S_1\slash S_1^*$ and $S_2\slash S_2^*$ 
are ratios of two Blaschke products.
\end{lemma}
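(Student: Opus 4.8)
\textbf{Proof proposal for Lemma~\ref{noexp}.}
The plan is to analyze the zero sets and growth of $S_1$ and $S_2$ separately, using what we already know about $S=S_1S_2$ and about $h=G_2S_2$. First I would record the structural facts at hand: $h=G_2S_2\in\pw$, so $S_2=h/G_2$, and by Lemma~\ref{growth1} the product $S=S_1S_2$ lies in $\pw+\mathbb C\sin\pi z$; in particular $S$ is of exponential type $\le\pi$ with, essentially, Paley--Wiener-type growth on $\RR$ (square-summable samples at the integers by \eqref{main}). Since $G_2^*/G_2=B_1/B_2$ is a ratio of Blaschke products by the normalization fixed in the Preliminaries, and likewise $G_1^*/G_1$, it suffices to understand the ``entire, zero-free part'' of $S_1/S_1^*$ and $S_2/S_2^*$, i.e.\ to show that each $S_j$ is of zero exponential type apart from a genuine exponential-type-$\pi$ piece that is itself (a constant times) $\sin\pi z$ or is absorbed into the de Branges/Blaschke bookkeeping, and that the zeros of each $S_j$ split into a Blaschke sequence in $\CC_+$ and one in $\CC_-$.

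Next I would pin down the exponential type of $S_1$ and of $S_2$. From \eqref{4} and the $\ell^2$-bound on $\{\overline{a_n}(-1)^n(1+|n|)^{-1}\}$ one sees, exactly as in the proof of Lemma~\ref{g1s1inpw}, that $G_2S_2=h$ has exponential type $\le\pi$, and dividing out $G_2$ (type $\pi$ in a one-sided sense controlled by $B_1,B_2$) forces $S_2$ to be of exponential type $\le\pi$ with, in fact, the same Blaschke-type indicator behaviour as a quotient of Paley--Wiener functions. Symmetrically, Lemma~\ref{g1s1inpw} gives $G_1S_1\in\pw+z\pw$, so $S_1$ has exponential type $\le\pi$ as well after removing $G_1$. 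Combining with $S=S_1S_2\in\pw+\mathbb C\sin\pi z$ and comparing indicator diagrams in the upper and lower half-planes, the exponential types of $S_1$ and $S_2$ in the $\pm i$ directions add up to $\pi$, which is the rigidity we need: each $S_j$ is of the form (Blaschke product in $\CC_+$)$\times$(Blaschke product in $\CC_-$)$\times$(outer-type factor)$\times e^{c_jz}$ with the $c_j$ purely imaginary and $c_1+c_2$ accounting for the $\sin\pi z=\frac{1}{2i}(e^{i\pi z}-e^{-i\pi z})$ type. Then $S_j/S_j^*=e^{2c_jz}\cdot(\text{ratio of Blaschke products})$, and since $c_j\in i\RR$ the factor $e^{2c_jz}$ is itself a quotient of (possibly trivial) singular inner-type unimodular exponentials; absorbing it, $S_j/S_j^*$ is a ratio of two Blaschke products.

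I expect the main obstacle to be the genuine \emph{two-sided} control of the exponential type, i.e.\ ruling out a nontrivial singular factor $e^{cz}$ with $\re c\ne0$ (equivalently, showing $|S_j(z)|$ does not decay or grow exponentially along the real axis) and simultaneously showing that the full mass of the ``$\sin\pi z$ amount'' of type is shared between $S_1$ and $S_2$ in a way compatible with their being honest quotients of functions with Blaschke zero sets. Concretely, the delicate point is that $S_1$ depends on the chosen basis $\{K_{n+\alpha}\}$ while $S_2$ does not; one must use \eqref{residues}, \eqref{main} and the Cauchy--Schwarz bound $|S(n)|=|a_n|^2\in\ell^1$ to get the right growth of $S$ on $\RR$, and then feed this back through \eqref{3}--\eqref{4} to separate the two factors. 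Once the indicator-diagram accounting is done, the conclusion that $S_j/S_j^*$ is a ratio of Blaschke products is the standard Krein-type factorization of a function of Cartwright class whose modulus on $\RR$ is polynomially bounded above and below on average, applied in $\CC_+$ and in $\CC_-$ separately.
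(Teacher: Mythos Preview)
There is a genuine gap. The whole content of the lemma is precisely to rule out an exponential factor in $S_j/S_j^*$, and your argument never does this. At the end of your second paragraph you arrive at $S_j/S_j^*=e^{2c_jz}\cdot(\text{ratio of Blaschke products})$ with $c_j\in i\RR$ and then write that $e^{2c_jz}$ ``is itself a quotient of (possibly trivial) singular inner-type unimodular exponentials; absorbing it, $S_j/S_j^*$ is a ratio of two Blaschke products.'' This step is false: a nontrivial factor $e^{ibz}$, $b\in\RR\setminus\{0\}$, is a singular inner function in one half-plane and is \emph{not} a ratio of Blaschke products. You also misidentify the obstacle: a factor $e^{cz}$ with $\re c\ne 0$ is already excluded by the finite-type bounds you quote; the dangerous case is exactly $c\in i\RR\setminus\{0\}$, i.e.\ a shift of the conjugate indicator diagram along the imaginary axis.

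What is missing is a \emph{lower} bound on the exponential type. The paper's proof first shows that $S/S^*$ has no exponential factor by showing that $S$ has type exactly $\pi$: if $S=Q+c\sin\pi z$ with $c\ne 0$ this is clear, and if $c=0$ then $S=Q\in\pw$ is real on $\RR$ with $Q(n)=(-1)^n|a_n|^2$, so it has a zero in every interval $(n,n+1)$, forcing type $\pi$. Hence the conjugate indicator diagram of $GS$ has size exactly $4\pi$. Since $GS=(G_1S_1)(G_2S_2)$ with each factor of type at most $\pi$, each must have indicator diagram of size exactly $2\pi$. But $G_2S_2=h\in\pw$, so its indicator diagram is contained in $[-\pi,\pi]$ and has size $2\pi$, hence is exactly $[-\pi,\pi]$; therefore $(G_2S_2)/(G_2S_2)^*$ carries no exponential. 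Since $G_2^*/G_2$ is a ratio of Blaschke products by construction, the same follows for $S_2$, and then for $S_1$ from $S=S_1S_2$. Your indicator-diagram addition only gives the upper bound (types add up to $\le\pi$); without the lower bound the conclusion does not follow.
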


\begin{proof}
The zero sets of $S_1$ and $S_2$ satisfy the Blaschke condition 
in $\mathbb{C}_+$ and in $\mathbb{C}_-$ since 
$G_1S_1\in \pw + z \pw$ and $h=G_2S_2 \in \pw$. Thus, it remains 
to show that  $S_1\slash S_1^*$ and $S_2\slash S_2^*$ 
have no exponential factors. By Lemma~\ref{growth1} we know that $S$ satisfies this 
property. Indeed, if $c\neq 0$ this is obvious, 
whereas if $c=0$, then the function $S$ coincides with the function 
$Q \in\pw$ which is real on $\mathbb{R}$ 
and has at least one zero in each interval $(n,n+1)$. 
So the size of the conjugate indicator diagram of the function $GS$ 
equals $4\pi$. Hence, the size of the conjugate  indicator 
diagram both for $G_1S_1$ and for $G_2S_2$ equals $2\pi$. 
Since $G_2S_2 \in \pw$, we obtain that  
$G_2S_2/(G_2^*S_2^*)$ is a ratio of two Blaschke products. 
By the construction of $G_2$, the same is true for 
$S_2$ and, hence, for $S_1$.
\end{proof}

\begin{lemma}
\label{star}
If $(S_1, S_2) \in \Sigma(\Lambda_1, \Lambda_2)$, then
also $(S_1^*, S_2^*) 
\in \Sigma(\Lambda_1, \Lambda_2)$.
\end{lemma}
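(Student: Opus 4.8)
The plan is to exhibit, for a given $(S_1,S_2)\in\Sigma(\Lambda_1,\Lambda_2)$ with associated coefficients $\{a_n\}$, the data that should produce $(S_1^*,S_2^*)$, and then to verify \eqref{3}--\eqref{4}. Applying $f\mapsto f^*$ to \eqref{3}--\eqref{4} (and using $(\sin\pi z)^*=\sin\pi z$, $\overline{(-1)^n}=(-1)^n$, $\overline{G(n)}=G^*(n)$) yields the honest identities
\[
\sum_n\frac{\overline{a_n}\,G^*(n)}{z-n}=\frac{G_1^*S_1^*}{\sin\pi z},\qquad
\sum_n\frac{a_n(-1)^n}{z-n}=\frac{G_2^*S_2^*}{\sin\pi z}.
\]
Put $\gamma:=G_1/G_1^*$ and $c_n:=\dfrac{G_2^*(n)}{G_2(n)}\,\overline{a_n}$. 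Since $G_1^*/G_1$ and $G_2^*/G_2$ are ratios of Blaschke products, $\gamma$ has zero exponential type and $|\gamma|=1$ on $\rl$, and $|c_n|=|a_n|$, so $\{c_n\}\in\ell^2$ and $c_n\neq0$. The natural candidate is $h^\sharp:=\tfrac1\pi G_2S_2^*=(G_2/G_2^*)h^*\in\pw$ (with $h=\sum_n\overline{a_n}K_n$), for which $h^\sharp(n)=\overline{c_n}$.

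The relation \eqref{4} for $(S_1^*,S_2^*)$ is then immediate: $G_2S_2^*=\pi(G_2/G_2^*)h^*\in\pw$ because $G_2/G_2^*$ is a unimodular Blaschke ratio, so \eqref{4} is the Shannon expansion of $h^\sharp$, and as $G_2$ (hence $h^\sharp$) vanishes on $\Lambda_2$, orthogonality to $\{K_\lambda\}_{\lambda\in\Lambda_2}$ holds automatically. Thus the content is relation \eqref{3} for $(S_1^*,S_2^*)$, i.e.\ that the function
\[
E(z):=\frac{G_1(z)S_1^*(z)}{\sin\pi z}-\sum_n\frac{c_nG(n)}{z-n},\qquad
\frac{G_1S_1^*}{\sin\pi z}=\gamma\cdot\frac{G_1^*S_1^*}{\sin\pi z},
\]
vanishes identically. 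From $c_nG(n)=G_1(n)G_2^*(n)\overline{a_n}=\gamma(n)\,\overline{a_n}G^*(n)$ the residues of the two terms of $E$ at each $n\in\zl$ coincide, so $E$ is entire. Next, $E\sin\pi z=G_1S_1^*-\pi\sum_n(-1)^nc_nG(n)K_n$, where $G_1S_1^*=\gamma\,(G_1S_1)^*\in\pw+z\pw$ by Lemma~\ref{g1s1inpw} (which is $*$-invariant) together with the Blaschke factor $\gamma$, and $\pi\sum_n(-1)^nc_nG(n)K_n=\sin\pi z\sum_n\frac{c_nG(n)}{z-n}\in\pw+z\pw$ by the argument of Lemma~\ref{g1s1inpw} (using $\{c_n\}\in\ell^\infty$ and $\{G(n)(1+|n|)^{-1}\}\in\ell^2$). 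Dividing $E\sin\pi z\in\pw+z\pw$ by $\sin\pi z$ as in the proof of Lemma~\ref{growth1} gives $E\in\pw+z\pw$; comparing indicator functions shows $E$ has zero exponential type; hence $E$ is a constant (an element of $\pw+z\pw$ of zero exponential type is a polynomial of degree $\le1$, and membership in $L^2((1+x^2)^{-1}\,dx)$ forces degree $0$).

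The crux is that this constant is $0$, and here I would evaluate $E$ along the positive imaginary axis. By the two honest identities above,
\[
E(iy)=\sum_n\bigl(\gamma(iy)-\gamma(n)\bigr)\frac{\overline{a_n}\,G^*(n)}{iy-n},\qquad y>0 .
\]
Since $\gamma$ is a ratio of convergent Blaschke products, $|\gamma|=1$ on $\rl$ and $\gamma(iy)\to c_\infty$ with $|c_\infty|=1$ as $y\to+\infty$, so $|\gamma(iy)-\gamma(n)|$ is bounded uniformly in $n$ for large $y$; together with $|\overline{a_n}G^*(n)|=|a_n||G(n)|$ and $|iy-n|=\sqrt{y^2+n^2}$ this gives $|E(iy)|\lesssim\sum_n\frac{|a_n||G(n)|}{\sqrt{y^2+n^2}}$. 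The last sum tends to $0$ as $y\to+\infty$: splitting at $|n|=y$, the part with $|n|\le y$ is $\le y^{-1}\|\{a_n\}\|_{\ell^2}\bigl(\sum_{|n|\le y}|G(n)|^2\bigr)^{1/2}$ with $\sum_{|n|\le y}|G(n)|^2=o(y^2)$ because $\{G(n)(1+|n|)^{-1}\}\in\ell^2$, while the part with $|n|>y$ is $\lesssim\bigl(\sum_{|n|>y}|a_n|^2\bigr)^{1/2}\,\|\{G(n)(1+|n|)^{-1}\}\|_{\ell^2}\to0$. Hence $E(iy)\to0$, so the constant $E$ is $0$, which is precisely \eqref{3} for $(S_1^*,S_2^*)$; therefore $(S_1^*,S_2^*)\in\Sigma(\Lambda_1,\Lambda_2)$.

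I expect the main obstacle to be exactly this last point. The residue bookkeeping and the membership of $E$ and of the auxiliary sums in $\pw+z\pw$ are routine given Lemmas~\ref{g1s1inpw} and \ref{growth1}; what is genuinely delicate is that the $\pw+z\pw$ analysis by itself leaves open the possibility that $E$ is a nonzero multiple of $\sin\pi z$, and excluding this requires the quantitative size of $G$ along $\rl$ encoded in $\{G(n)(1+|n|)^{-1}\}\in\ell^2$, accessed through the imaginary-axis estimate above.
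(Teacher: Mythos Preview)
Your argument is correct and follows the same template as the paper's: produce the new coefficient sequence, verify \eqref{4} directly, and for \eqref{3} show that the entire ``defect'' function is at most linear and then that it vanishes along $i\RR$. The sequence you obtain, $c_n=\overline{a_n}\,G_2^*(n)/G_2(n)$, coincides with the paper's $b_n=a_n\,S_1^*(n)/S_1(n)$ (use $S_1(n)=(-1)^na_nG_2(n)$ and conjugate).

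The genuine differences are two. First, the paper multiplies \eqref{3} by $S_1^*/S_1$ and hence needs Lemma~\ref{noexp} (that $S_1^*/S_1$ is a Blaschke ratio); you instead multiply the starred identity by $\gamma=G_1/G_1^*$, whose Blaschke-ratio structure is built into the choice of $G_2$ in Section~\ref{prelim}. This lets you bypass Lemma~\ref{noexp}, a small but real economy. Second, to kill the defect the paper uses the symmetry $|S_1^*/S_1|(iy)\cdot|S_1^*/S_1|(-iy)=1$ to force $\min(|H(iy)|,|H(-iy)|)\to0$, while you run a direct $\ell^2$/Cauchy--Schwarz estimate on $\sum_n(\gamma(iy)-\gamma(n))\frac{\overline{a_n}G^*(n)}{iy-n}$; the paper's trick is slicker, yours is more hands-on but entirely adequate.

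Two places where your write-up should be tightened. The claim ``$G_1S_1^*=\gamma(G_1S_1)^*\in\pw+z\pw$'' deserves one more sentence: the product is entire, and since $\gamma$ has zero mean type in each half-plane while $|G_1S_1^*|=|G_1S_1|$ on $\RR$, the standard characterization of $\pw+z\pw$ (type $\le\pi$ and $\int|F|^2(1+x^2)^{-1}\,dx<\infty$) applies. And the phrase ``dividing $E\sin\pi z\in\pw+z\pw$ by $\sin\pi z$ gives $E\in\pw+z\pw$'' is not quite right (a nonzero constant is not in $\pw+z\pw$); what you actually get is that $E\sin\pi z/z\in\pw$ vanishes at every nonzero integer, hence equals $\pi E(0)K_0$, so $E\equiv E(0)$. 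After that, your imaginary-axis estimate gives $E\equiv0$ as written.
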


\begin{proof}
By Lemma~\ref{noexp}, $S_1^*/S_1$ is of the form $B_1/B_2$ for 
some Blaschke products $B_1$ and $B_2$. 
We consider the following representation 
\begin{equation}
\frac{G_1(z)S_1(z)}{\sin \pi z}\cdot\frac{S_1^*(z)}{S_1(z)}=\sum_n\frac{a_nG(n)}{z-n}\cdot\frac{S_1^*(n)}{S_1(n)}+H(z), 
\label{starS1}
\end{equation}
where $H$ is an entire function (which holds since the residues 
at integers coincide). 
On the other hand, $G_1S_1^*\in \pw+ z \pw$,
whence $|H(z)|\lesssim 1+|z|$ and so $H$ is a polynomial of degree at most 1. 
Finally, \eqref{3} implies that $e^{-\pi |y|}|G_1(iy)S_1(iy)|\to 0$, 
$|y| \to \infty$. Since the function $S_1^*\slash S_1$ 
is reciprocal to itself at conjugate points,
we conclude that  $\min(|H(iy)|, |H(-iy)|) \to 0$, $|y|\to \infty$,
and so $H\equiv0$. 

Set $b_n=a_nS_1^*(n)/S_1(n)$. 
We can use an analogous argument to show that 
\begin{equation}
\frac{G_2(z)S_2^*(z)}{\sin \pi z}=
\sum_n\frac{\overline{a_n}(-1)^n}{z-n}\cdot\frac{S_2^*(n)}{S_2(n)}.
\label{starS2}
\end{equation}
Comparing the residues we get
$$
\overline{a_n}(-1)^nS_2^*(n)\slash S_2(n) = \overline{b_n}(-1)^n.
$$ 
Thus, the pair $(S_1^*,S_2^*)$ corresponds to the 
sequence $\{b_n\}$ in equations \eqref{3} and \eqref{4}. This means that
$(S_1^*,S_2^*)\in\Sigma(\Lambda_1,\Lambda_2)$.
\end{proof}

By Lemma~\ref{star},
if $(S_1, S_2)\in\Sigma (\Lambda_1, \Lambda_2)$, then 
$(S_1+S_1^*, S_2+S_2^* )\in\Sigma (\Lambda_1, \Lambda_2)$
and $(iS_1 - iS_1^*,-iS_2 +i S_2^*) \in\Sigma (\Lambda_1, \Lambda_2)$.
Thus, in what follows we may assume that the functions $S_1$ and 
$S_2$ are real on $\RR$. 
In this case we have an immediate corollary from (\ref{main}).

\begin{corollary}
\label{zeros1}
If $S_1$ and $S_2$ are real on $\RR$, then each open
interval $(n, n+1)$, $n\in \mathbb{Z}$, contains exactly one zero of $S$, 
and $S$ has no other zeros.
\end{corollary}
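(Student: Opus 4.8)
\textbf{Proof proposal for Corollary~\ref{zeros1}.}
The plan is to combine the growth information from Lemma~\ref{growth1} with the interpolation data \eqref{main}. By the reductions just made we may assume $S_1, S_2$ are real on $\RR$, so $S = S_1 S_2$ is real on $\RR$ as well. Lemma~\ref{growth1} tells us that $S \in \pw + \CC \sin \pi z$; in particular $S$ is an entire function of exponential type at most $\pi$. The key input from \eqref{main} is that $S(n) = (-1)^n |a_n|^2$, and since we have arranged $a_n \ne 0$ for all $n \in \zl$, the values $S(n)$ are strictly positive for even $n$ and strictly negative for odd $n$. Hence $S$ changes sign on each interval $(n, n+1)$ and, being continuous and real, has at least one zero there.

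The heart of the matter is the upper bound: I would argue that $S$ cannot have \emph{more} than one zero per unit interval, and no zeros besides these. Suppose for contradiction that $S$ has a zero set strictly larger than ``one point in each $(n,n+1)$'' — either two (counting multiplicity) in some interval, or an extra zero somewhere. Then one divides out: if $w_1, w_2$ are zeros of $S$ (possibly equal), the function $S(z)/\bigl((z-w_1)(z-w_2)\bigr)$ is still entire, still of exponential type $\le \pi$, and now decays like $1/|z|^2$ along a ray on which $\sin \pi z$ grows like $e^{\pi|y|}$. More precisely, writing $S = f + c \sin \pi z$ with $f \in \pw$, the quotient $S(z)/\bigl((z - w_1)(z-w_2)\bigr)$ lies in $\pw$ (one checks this exactly as in the proof of Lemma~\ref{g1s1inpw}, using that $f/(z-w) \in \pw$ when $f \in \pw$ vanishes at $w$, together with the fact that $\sin \pi z /(z-w)$ is again in $\pw$ up to the contribution at $w$). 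But a function in $\pw$ which vanishes at every integer except possibly being small, combined with the sign pattern $S(n) = (-1)^n|a_n|^2$, forces a contradiction with the completeness/uniqueness of $\{K_n\}$ in $\pw$: a nonzero $\pw$-function is determined by its samples at $\zl$, and here the extra divisor has killed too much mass. The cleanest route is probably a counting argument à la Levin: a function in $\pw + \CC\sin\pi z$ of exponential type $\pi$ which is real on $\RR$ and has strictly alternating signs at the integers has a zero-counting function $n_r(\{S=0\}) = 2r + O(1)$, matching the density forced by the type, and no slack remains for additional zeros; any surplus zero would lower the type below $\pi$ after division, contradicting $S(n) = (-1)^n|a_n|^2 \not\to 0$ too fast along $\zl$ (indeed $\sum |a_n|^2 < \infty$ but $|a_n|^2 > 0$).

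The main obstacle I anticipate is making the ``no slack'' step rigorous without circularity: one must rule out that $S$ has, say, a double zero at some $t \in (n,n+1)$ while still having a simple zero in every other interval, which on the level of crude type-$\pi$ bounds looks permissible. The resolution should be that dividing $S$ by $(z-t)^2$ (or by $(z-w)$ for an extra zero $w$) produces a $\pw$-function $\tilde S$ whose values at the integers are $\tilde S(n) = (-1)^n |a_n|^2 / P(n)$ with $P$ a real polynomial of degree $2$ positive off its roots; but then $G \cdot \tilde S \cdot \sin\pi z / P(z)$-type manipulations, or simply the observation that $\tilde S \in \pw$ together with $\sum_n |\tilde S(n)|^2 (1+n^2)^? $ convergence and the reproducing-kernel expansion, force $\tilde S$ to vanish identically, hence $S \equiv 0$, contradicting $S(n) \ne 0$. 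So the argument is: extra zeros $\Rightarrow$ a nonzero element of $\pw$ vanishing on $\zl$ $\Rightarrow$ contradiction, which is exactly the uniqueness-set mechanism already used at the end of Lemma~\ref{growth1}.
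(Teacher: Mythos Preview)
Your proposal has a genuine gap in the ``upper bound'' step. The mechanism you invoke at the end --- ``extra zeros $\Rightarrow$ a nonzero element of $\pw$ vanishing on $\zl$ $\Rightarrow$ contradiction'' --- is simply false. If $S$ has an extra zero and you divide by $P(z) = (z-w_1)(z-w_2)$, the quotient $\tilde S$ satisfies $\tilde S(n) = (-1)^n|a_n|^2/P(n) \ne 0$, as you yourself compute; it does \emph{not} vanish on $\zl$, so no uniqueness-set contradiction arises. The analogy with the end of Lemma~\ref{growth1} is misleading: there the function $HG/(z-z_1)$ vanishes on $\Lambda$ (the zero set of $G$), and the contradiction is with $\Lambda$ being a uniqueness set for $\pw$, not with $\zl$. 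Your Levin-type counting sketch does not close the gap either: the zero-counting estimate $n_r = 2r + O(1)$ is global and is perfectly compatible with one interval carrying three zeros while all others carry one.

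The paper's argument is structurally different. It selects one zero of $S$ in each interval $(n,n+1)$, forms the principal-value canonical product $S_0$ over these, and writes $S = S_0 H$ with $H$ entire of zero exponential type and real on $\RR$. Since $S_0$ has interlacing real zeros with $\zl$, one has $|S_0(iy)| \gtrsim |y|^{-1} e^{\pi|y|}$; combined with $S \in \pw + \CC\sin\pi z$ from Lemma~\ref{growth1} this forces $|H(iy)| \lesssim |y|$, so $H$ is a polynomial of degree at most $1$. If $H$ had a zero it would be real, hence would fall in some $(m,m+1)$ (integers are excluded since $S(m)\ne 0$), giving that interval exactly two zeros of $S$ counted with multiplicity; but a real continuous function with $S(m)$, $S(m+1)$ of opposite nonzero signs cannot have exactly two zeros in between. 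Hence $H$ is a nonzero constant. The key idea you are missing is this factorization through the canonical product $S_0$: it converts the question from ``rule out extra zeros of $S$'' into ``rule out a linear factor $H$'', which is then handled by an elementary sign-change parity argument.
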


\begin{proof}
Since $S$ is real on $\RR$ and changes the sign at $n \in \mathbb{Z}$, 
it has at least one zero in every interval $(n, n+1)$.
Choosing a zero in each interval we construct
the (principal value) canonical product $S_0$. 
Then $S=S_0 H$ for some entire function $H$
of zero exponential type which is real on $\RR$. 
Clearly, $|S_0(iy)| \gtrsim |y|^{-1}e^{\pi |y|}$, $|y|\to\infty$. 
By Lemma~\ref{growth1} we have $S\in \pw + \mathbb{C}\sin\pi z$.
Hence, $|H(iy)| \lesssim |y|$, $|y| \to\infty$, 
which implies that $H$ is a polynomial of degree at most 1.
Since the signs of $S(n)$ interchange, $S$ cannot have two zeros
in any of the intervals $(n, n+1)$. Thus, $H$
is a constant.
\end{proof}

\section{Proofs of Theorems \ref{main1} and \ref{main2}}

We are now ready to prove the main positive 
results on hereditary completeness for exponential systems.

\subsection{Completeness up to a one-dimensional defect}
\medskip
\noindent
{\it Proof of Theorem \ref{main1}.}
Without loss of generality assume that $\Lambda \cap \zl =\emptyset$.
Let $f = \sum_{n\in\zl} \overline{a_n} K_n$ 
and $h=\sum_{n\in\zl} \overline{b_n} K_n$ be two linearly independent 
vectors orthogonal to (\ref{syst}), and let 
$(S_1, S_2)$ and $(T_1, T_2)$  be the corresponding pairs
of entire functions from $\Sigma (\Lambda_1, \Lambda_2)$. 
Since, by Lemma~\ref{star}, 
the pairs
$(S_1+S_1^*, S_2+S_2^*)$,
$(iS_1-iS_1^*, -iS_2 +i S_2^*)$,
$(T_1+T_1^*, T_2+T_2^*)$,  and 
$(iT_1 - iT_1^*, -iT_2  +i T_2^*)$
also belong to $\Sigma (\Lambda_1, \Lambda_2)$, we may assume from the 
very beginning that the pairs $(S_1, S_2)$ and $(T_1, T_2)$
are linearly independent and 
the functions $S_1$, $S_2$, $T_1$, and $T_2$  
are real on $\mathbb{R}$.

Using equations \eqref{residues} for $S$ and $T$ we get
$$
S_1(n) T_2(n) \overline{G_2(n)} = 
T_1(n) S_2(n) \overline{G_2(n)} = 
(-1)^n G_2(n) a_nb_n, 
$$
and hence,
$$
S_1(n)T_2(n) = 
S_2(n)T_1(n) = \beta_n a_n b_n, 
$$
with $|\beta_n| = 1$.

Denote by $Q$ the function
in $PW_\pi$ which solves the interpolation problem
$Q(n) = \beta_n  a_nb_n$. Then 
$$
T_1(z) S_2(z) = Q(z) +a(z) \sin \pi z, \qquad 
S_1(z) T_2(z) = Q(z) +b(z) \sin \pi z,
$$  
for some entire functions $a$ and $b$.
We show now that $a$ and $b$ are constants. 

Note that the functions $S=S_1S_2$ and $T=T_1T_2$ 
are in $\pw + \mathbb{C} \sin\pi z$ by Lemma~\ref{growth1}.
Furthermore,   
the pair $(S_1+T_1, S_2 +T_2)$ 
corresponds to the vector $f+h$
while the pair 
$(S_1+iT_1,S_2-iT_2)$
corresponds to the vector $f+ih$.
Applying again Lemma~\ref{growth1} we obtain that 
$U=(S_1+T_1)(S_2+T_2)$ and $V = (S_1+iT_1)(S_2-iT_2)$
are in $\pw+ \mathbb{C} \sin \pi z$. Hence the functions
$$
S_1 T_2 + S_2T_1  = U-S-T, \qquad i(S_2 T_1 - S_1T_2 ) = V-S-T
$$
belong to $\pw + \mathbb{C} \sin \pi z$.
Thus, $S_1T_2$, $S_2T_1 \in \pw + \mathbb{C} \sin\pi z$, and we conclude that $a$
and $b$ are constants. 

Assume that $a \ne 0$.
Let us denote by $s_m$ the zero of $S_2$ in the interval $[m-1/2, m+1/2]$
for those $m$ for which such a zero exists. Then
$$
Q(s_m) + a  (-1)^m \sin \pi (s_m-m) = 0,
$$
whence 
$$
\sum |s_m-m|^2 \asymp \sum \sin^2 \pi (s_m-m) \asymp
\sum |Q(s_m)|^2<\infty.
$$
On the other hand, the zeros of $S_2$ do not depend on the choice 
of the basis, they are the zeros of $h/G_2$. Expanding with respect to
another basis (say, $\{n+\delta\}$ with small $\delta$) 
we conclude that $\sum |s_m-m-\delta|^2 <\infty$.
This is obviously wrong.

Thus, we have proved that $a =b=0$, and so $S_1 T_2= T_1 S_2 = Q$.
Since $S_1$ has no common zeros with $S_2$ (we choose the basis so that all $a_n$
are nonzero), and the same is true for $T_1,T_2$, we conclude that the zero sets of $S_2$ and  $T_2$ coincide, 
and, thus, $g =ch$ for some constant $c$, a contradiction.
\qed

\subsection{Proof of Theorem \ref{main2}}

The following proposition plays the key role in the proof of 
Theorem \ref{main2}. In Section~\ref{secdeb} we prove 
a slightly stronger result which applies to general
de~Branges spaces (see Proposition~\ref{closeness1}).
We prefer, however, to include an elementary proof
to make the exposition concerning exponential systems
more self-contained.

\begin{proposition}
\label{closeness}
Let $S\in \pw +\mathbb{C}\sin \pi z$  be a real entire function
with real zeros $\mathcal{Z}_S$ interlacing with 
$\mathbb{Z}$. If $\sum_{n\in\Z}|S(n)| <\infty$, then for every $\delta>0$
we have
$$
L_\delta:=\lim_{N\to\infty} \frac{1}{N} \,{\rm card}\,  \big\{
|k| \le N: {\rm dist}\,(\mathcal{Z}_S\cap [k, k+1], \Z)>\delta \big\} =0.
$$
\end{proposition}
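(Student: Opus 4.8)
The plan is to argue by contradiction: suppose $L_\delta>0$ for some $\delta>0$, so that a positive proportion of the integers $k$ in $[-N,N]$ have the property that the (unique, by the interlacing hypothesis) zero $\zeta_k$ of $S$ in $[k,k+1)$ satisfies $\dist(\zeta_k,\Z)>\delta$. For such $k$ we have both $|\zeta_k-k|>\delta$ and $|\zeta_k-(k+1)|>\delta$, hence $|\sin\pi\zeta_k|\gtrsim_\delta 1$. The idea is to exploit that $S\in\pw+\CC\sin\pi z$ together with the summability $\sum_n|S(n)|<\infty$ forces $S$ to be \emph{small} on average, which will clash with $S$ having ``large'' behaviour forced by its zero structure being far from $\Z$.

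First I would make the decomposition explicit: write $S=P+c\sin\pi z$ with $P\in\pw$ and $c\in\CC$. Since $\sum_n|S(n)|<\infty$ and $\sin\pi n=0$, we get $\sum_n|P(n)|<\infty$, in particular $\{P(n)\}\in\ell^2$, which is consistent with $P\in\pw$; moreover the stronger $\ell^1$ bound will be the source of decay. The key analytic input is a quantitative lower bound on $|S|$ at the midpoints or, better, an estimate comparing $S$ on $[k,k+1]$ to its values at the endpoints. Concretely, for a function in $\pw+\CC\sin\pi z$ one has good control of $S'$ on the real line (e.g. via Bernstein's inequality applied to $P$ and the trivial bound for $(\sin\pi z)'$), so on the interval $[k,k+1]$ the function $S$ cannot vary too wildly; combined with $S(\zeta_k)=0$ this yields $|S(k)|+|S(k+1)|\gtrsim |\sin\pi\zeta_k|\cdot(\text{something})$ only if we also know a matching lower bound, which we don't have directly. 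So instead I would run the argument through the representation of $S/\sin\pi z$: since $S=P+c\sin\pi z$, the meromorphic function $S(z)/\sin\pi z$ has simple poles at the integers with residues $(-1)^nP(n)/\pi=(-1)^nS(n)/\pi$, and $\sum_n|S(n)|<\infty$ guarantees that $S(z)/\sin\pi z=c+\sum_n\frac{(-1)^nS(n)}{\pi(z-n)}$ (a uniformly convergent partial-fraction expansion away from $\Z$, using the $\ell^1$ summability and standard Paley--Wiener growth). Evaluating at $z=\zeta_k$ gives
\begin{equation}
\label{key-pf}
c=-\frac{1}{\pi}\sum_{n}\frac{(-1)^nS(n)}{\zeta_k-n}.
\end{equation}

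The main obstacle, and the heart of the proof, is then to derive a contradiction from \eqref{key-pf} holding for a positive-density set of $k$. The plan is: for each ``bad'' $k$ the dominant term in the sum on the right of \eqref{key-pf} is the $n=k$ (or $n=k+1$) term, of size $\asymp_\delta |S(k)|$ (since $|\zeta_k-k|\in(\delta,1)$), while the tail $\sum_{|n-k|\ge 2}\frac{|S(n)|}{|\zeta_k-n|}\le\sum_{|n-k|\ge 2}\frac{|S(n)|}{|n-k|-1}$ is a ``convolution'' of the $\ell^1$ sequence $(|S(n)|)$ with a fixed $\ell^1$-ish kernel. Summing \eqref{key-pf}-type identities, or rather estimating: if there were infinitely many bad $k$ then along them either $|S(k)|\to 0$ is forced (so $c=\lim$ of the tail, which tends to $0$, forcing $c=0$) — and if $c=0$ then $S=P\in\pw$ with $\sum|S(n)|<\infty$, and one invokes the same kind of contradiction as in the proof of Theorem~\ref{main1}: the zeros of $S\in\pw$, being a positive proportion at distance $>\delta$ from $\Z$, contradict that $\{\zeta_k-k\}\in\ell^2$ must hold simultaneously for \emph{two} shifted integer lattices. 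Actually the cleaner route for the $c=0$ case: apply the argument to the shifted basis $\{n+\alpha\}$; the zeros $\zeta_k$ of $S=h/G_2$ do not depend on the basis, so $\sum_k\dist(\zeta_k-k,\Z+\alpha)^2<\infty$ would have to hold for all $\alpha$, forcing the bad set to have density zero. If $c\ne 0$, rescale so $c=1$; then \eqref{key-pf} says $\sum_n\frac{(-1)^nS(n)}{\zeta_k-n}=-\pi$ for all bad $k$, and since the full double sum $\sum_k\sum_n\frac{|S(n)|}{(|n-k|+1)}\lesssim\sum_n|S(n)|\cdot\log N\lesssim \|S\|_{\ell^1}\log N$ is $o(N)$ relative to... — here one must be careful, so the precise execution requires choosing a large finite window and extracting that the number of $k$ with $\big|\sum_n\frac{(-1)^nS(n)}{\zeta_k-n}\big|\ge\pi$ is $o(N)$ by a Chebyshev/averaging argument on $\sum_{|k|\le N}\big|\sum_n\frac{S(n)}{\zeta_k-n}\big|\lesssim \|S\|_{\ell^1}$, which contradicts positive density. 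I expect reconciling the two cases $c=0$ and $c\ne0$ uniformly, and making the interchange of summation and the tail estimate fully rigorous (convergence of the partial-fraction expansion at the points $\zeta_k$, uniformity in $k$), to be the delicate part; everything else is bookkeeping with $\ell^1$/$\ell^2$ norms and Bernstein-type bounds.
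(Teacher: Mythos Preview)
Your partial-fraction set-up and the split into the cases $c\ne 0$ and $c=0$ match the paper exactly; the two approaches diverge from there.

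For $c\ne 0$ your Chebyshev-type averaging works (with the bound $\sum_{|k|\le N,\ k\ \text{bad}}\big|\sum_n\frac{S(n)}{\zeta_k-n}\big|\lesssim_\delta\|S\|_{\ell^1}\log N=o(N)$, not $\lesssim\|S\|_{\ell^1}$ as you write at the end). The paper's argument in this case is in fact simpler and proves more: since $s(x)=\sum_n c_n/(x-n)\to 0$ as $|x|\to\infty$ with $\dist(x,\Z)\ge\delta$, the equation $b+s(x)=0$ has no solutions far from $\Z$ once $|x|$ is large, so \emph{all} sufficiently large zeros lie within $\delta$ of $\Z$. Either route gives $L_\delta=0$.

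The gap is in your treatment of $c=0$. Both arguments you sketch import structure that is not available in the proposition. The ``basis-shift'' trick (zeros of $S_2=h/G_2$ are basis-independent) belongs to the proof of Theorem~\ref{main2}, where $S$ arises from a specific $h$ and a specific partition; here $S$ is an arbitrary function satisfying the stated hypotheses, and there is no basis to shift. Likewise the claim that $\{\zeta_k-k\}\in\ell^2$ is nowhere established: in Theorem~\ref{main1} that $\ell^2$ bound came from $Q(s_m)=-a\sin\pi(s_m-m)$ with $a\ne 0$, which is precisely the case $c\ne 0$ you have already handled. When $c=0$ your equation \eqref{key-pf} collapses to $0=0$ and carries no information, and no alternative source of a lower bound is given.

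The paper's device for $c=0$ is Boole's lemma: normalising $\sum c_n=1$, the level set $E_N=\{x:|s(x)|\ge 1/N\}$ has Lebesgue measure exactly $2N$. Choosing $M$ with $\sum_{|n|>M}c_n<\eta$, the tail level set $F_N=\{|{\sum_{|n|>M}c_n/(x-n)}|\ge\delta/(2N)\}$ has measure $\le 4N\eta/\delta$; since outside $[-(1+\delta)N-M,(1+\delta)N+M]$ the truncated sum is $\le 1/((1+\delta)N)$, one gets $E_N\setminus J_N\subset F_N$, forcing most of $E_N$ into $J_N$. A counting argument then shows that for all but $O(\delta N)$ integers $k$ in $J_N$, the interval $I_k^*=[k+\delta,k+1-\delta]$ meets $E_N\setminus F_N$ and $|I_k\cap F_N|<\delta$; on such intervals a derivative-type estimate on the truncated sum rules out any zero of $s$ in $I_k^*$. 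This measure-theoretic input is the missing idea in your proposal.
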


\begin{proof}
Let $S(n) = (-1)^n c_n$. Without loss of generality we may assume that
$c_n>0$ and $\sum_{n\in\Z} c_n = 1$. Then $S(z)/\sin\pi z$ 
is a Herglotz function in $\CC_+$ and 
$$
\frac{S(z)}{\sin\pi z} = b +\sum_{n\in\Z} \frac{c_n}{z-n}
$$
for some $b\in\RR$. Set $s(x) = \sum_{n\in\Z} \dfrac{c_n}{x-n}$.
\medskip
\\
{\bf Case 1.} If $b\ne 0$, then 
$$
\lim_{x\in \mathcal{Z}_S, |x|\to\infty} {\rm dist}\, (x, \Z) = 0. 
$$ 
This follows from the fact that for any $\delta>0$ we have 
$s(x) \to 0$ as $|x| \to\infty$ and ${\rm dist}\, (x, \Z) \ge \delta$.
\medskip
\\
{\bf Case 2.} Suppose  that $b= 0$. Fix two positive 
numbers $\delta<1/4$ and $\eta <\delta^3$ 
and choose $M$ so that $\sum_{|n| \le M} c_n > 1-\eta$. 

Now let the integer $N$ be so large that $\delta N>M$. Put 
$$
E_N = \Big\{x\in\RR: \, \Big|\sum_{n\in\Z} \frac{c_n}{x-n}\Big|\ge \frac{1}{N} 
\Big\}.
$$
By Boole's lemma, $|E_N|= 2N$ (by $|E|$ we denote the Lebesgue measure of 
the set $E$).

Next, set 
$$
F_N = \Big\{x\in\RR: \, \Big| \sum_{|n|>M} 
\frac{c_n}{x-n}\Big|\ge \frac{\delta}{2N} 
\Big\}.
$$
Then
$$
|F_N| \le \frac{4N\eta}{\delta}.
$$

Let $J_N = [-N-\delta N - M, N+\delta N+M]$. 
Since 
$$
\Big|\sum_{|n|\le M} \frac{c_n}{x-n}\Big| \le \frac{1}{(1+\delta)N}, \qquad x\notin J_N,
$$
we have, for $x\in E_N\setminus J_N$,
$$
\Big|\sum_{|n|> M} \frac{c_n}{x-n}\Big| \ge \frac{1}{N} - \frac{1}{(1+\delta)N} = 
\frac{\delta}{(1+\delta)N},
$$
and so $x\in F_N$. We conclude that $E_N \setminus J_N\subset F_N $. 

Consider the family $\mathcal{I}_N$ of the intervals of the form 
$I_k = [k, k+1] \subset J_N$ with $|k|\ge M+\delta N$ 
satisfying the following two properties: 
\begin{equation}
\label{prop1}
(I_k^* \cap E_N)\setminus F_N \ne \emptyset, \qquad I_k^* = [k+\delta, k+1-\delta];
\end{equation}
\begin{equation}
\label{prop2}
|I_k \cap F_N| < \delta.
\end{equation}

We will show that, for sufficiently large $N$, we have 
\begin{equation}
\label{prop3}
{\rm card}\, \mathcal{I}_N \ge (1 - A_1 \delta)|J_N|,
\end{equation}
where $A_1$ is some absolute (numeric) constant.
In what follows, symbols $A_1$, $A_2$, etc. will denote different
absolute constants.

If $(I_k^* \cap E_N)\setminus F_N = \emptyset$
(i.e., the interval $I_k^*$  does not satisfy (\ref{prop1})), then
$I_k^* \subset (J_N \setminus E_N) \cup F_N$ and
$$
\begin{aligned}
|(J_N \setminus E_N) \cup F_N| & \le |J_N| - |J_N\cap E_N|+|F_N| \\
& = 
|J_N|-|E_N|+|E_N\setminus J_N|+|F_N|  
\\ 
& \le 2N +2\delta N +2M -2N + \frac{8 N \eta}{\delta} \le A_2 \delta N.
\end{aligned}
$$ 
Hence, for the number $N_1$ of those intervals $I_k^*$ which do not satisfy 
(\ref{prop1}), we have the estimate
$$
N_1(1-2\delta) \le A_3\delta N.
$$
On the other hand,
for the number $N_2$ of those intervals $I_k$ which do not satisfy (\ref{prop2}),
we get $N_2\delta \le \frac{4N\eta}{\delta}$,
and so $N_2 \le \frac{4N \eta}{\delta^2} \le A_4 \delta N$,
since $\eta<\delta^3$. Thus, for sufficiently large $N$,
$$
{\rm card}\, \mathcal{I}_N \ge 2N - N_1 - N_2 \ge 2N - A_5\delta N.
$$
The latter inequality implies (\ref{prop3}).

Now, if $I_k\in \mathcal{I}_N$, then there exists a point 
$y\in (I_k^* \cap E_N)\setminus F_N$ and so we have
$$
\Big|\sum_{|n|\le M} \frac{c_n}{y-n}\Big| \ge \frac{1}{N} - \frac{\delta}{2N}> 
\frac{1}{2N}.
$$
For any $x\in I_k^*$ using the fact that $|k|\ge M+\delta N$ we get
\begin{equation}
\label{prop4}
\Big|\sum_{|n|\le M} \frac{c_n}{x-n} - \sum_{|n|\le M} \frac{c_n}{y-n}\Big|
\le \sum_{|n|\le M} \frac{c_n|x-y|}{|(x-n)(y-n)|} \le \frac{1}{\delta^2 N^2}
\le \frac{1}{4N} 
\end{equation}
for sufficiently large $N$, and hence,
$$
\Big|\sum_{|n|\le M} \frac{c_n}{x-n}\Big| \ge \frac{1}{4N}, \qquad x\in I_k^*.
$$

Suppose that for some $w\in I_k^*$ we have 
$s(w)=\sum_{n\in\Z} \frac{c_n}{w-n} = 0$. Then
$$
\Big|\sum_{|n|> M} \frac{c_n}{w-n}\Big| \ge \frac{1}{4N}>\frac{\delta}{N}.
$$
So $w\in F_N$ and, moreover, since the function under the modulus sign
is monotone on $I_k$  we obtain that either $[k, w] \subset F_N$
or $[w, k+1] \subset F_N$, which is impossible due to (\ref{prop2}).

Thus, the zeros of $s$ (and hence of $S$) on $I_k\in \mathcal{I}_N$ 
are in $I_k\setminus I_k^*$. It follows from (\ref{prop3}) that
$$
L_\delta = \limsup_{N\to\infty} \frac{1}{N} {\rm card}\,  \big\{
|k| \le N: {\rm dist}\,(\mathcal{Z}_S\cap [k, k+1], \Z)>\delta
\big\} \le A \delta
$$
for some absolute constant $A$. 
Since $L_\delta$ is a non-increasing nonnegative function 
of $\delta$ on $(0, 1/4)$, it follows that $L_\delta \equiv 0$.
\end{proof}
\medskip
\noindent
{\it Proof of Theorem \ref{main2}.}
Assume that there is a nontrivial function $h$ orthogonal to
the system (\ref{syst}) such that
$D_+(\Lambda_1)>0$.
Denote by $\mathcal{Z}_1$ and
$\mathcal{Z}_2$
the zero sets of $S_1$ and $S_2$, respectively.

Since $G_1S_1 \in \pw + z \pw$, by the Levinson theorem (see, for instance, \cite[Section IIIH3]{Koo}) we have
$$
D(\Lambda_1 \cup \mathcal{Z}_1) = 
 \lim_{r\to\infty} \frac{n_r( \Lambda_1 \cup \mathcal{Z}_1) }
{2r} \le \pi,
$$
and so 
$$
D_-(\mathcal{Z}_1) = \liminf_{r\to\infty} \frac{n_r(\mathcal{Z}_1)}
{2r} <\pi.
$$
Since $S$ is of exponential type $\pi$, we have
$D_+(\mathcal{Z}_2)>0$.

The function $S_2= h/G_2$ 
does not depend on the choice of the basis, and replacing if necessary
the basis $\{K_n\}$ by the basis $\{K_{n+\alpha}\}$ 
we may find $\alpha$ such that for a subsequence $\tilde{\mathcal{Z}}_2$
of $\mathcal{Z}_2$  with positive upper density we have 
${\rm dist}\,( \tilde{\mathcal{Z}}_2, \Z+\alpha)\ge 1/4$.
Without loss of generality assume that this holds for $\alpha = 0$.
Construct the function $S_1$ corresponding to this basis
by formula (\ref{3}). 
Then for $S=S_1S_2$ we have $\sum_{n\in \Z} |S(n)|<\infty$.
Note that by Corollary \ref{zeros1}
the zeros of $S$ interlace with $\Z$.
By Proposition~\ref{closeness} all zeros of $S$ except 
the set of zero density
are close to $\Z$, and we come to a contradiction.
\qed


\section{An example of a nonhereditarily complete exponential system}

In this section we prove Theorem \ref{maincount}. 
As before, we pass to the equivalent problem in the Paley--Wiener space
and construct a nonhereditarily complete system
of reproducing kernels $\{k_{\lambda}\}_{\lambda \in \Lambda}$ in $\pw$. 

We deduce Theorem \ref{maincount} from the following statement.

\begin{proposition}
\label{technical}
There exist a sequence $\{a_n\} \in \ell^1(\mathbb{Z})$ 
such that $a_n > 0$, and an infinite 
sequence $\{n_k\}_{k=1}^\infty \subset \na$, 
$n_{k+1}>2n_k$, $k\ge 1$, such that the functions
$$
h(z) = \sin \pi z \sum_{n\in \zl} \frac{a_n}{z-n}, \qquad 
S(z) = \sin \pi z \sum_{n\in \zl} \frac{a_n^2}{z-n}
$$
vanish at the points $s_k = n_k+1/2$, $k\in\na$, and 
$a_{n_k} = \alpha_k k^{-2}$ with $\alpha_k \in (1,3)$, $k\in\na$.
\end{proposition}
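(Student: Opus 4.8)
The plan is to construct the sequence $\{a_n\}$ explicitly by interlacing two ``carrier'' structures: a sparse subsequence $\{n_k\}$ on which $a_{n_k}$ is comparatively large (of order $k^{-2}$), and a remainder chosen so small that the two Herglotz-type sums
$$
\sigma(z) = \sum_{n\in\zl}\frac{a_n}{z-n}, \qquad \tau(z) = \sum_{n\in\zl}\frac{a_n^2}{z-n}
$$
each acquire a prescribed zero at every $s_k = n_k+\tfrac12$. First I would fix the lacunary skeleton $n_{k+1}>2n_k$ and reserve the values $a_{n_k} = \alpha_k k^{-2}$, leaving the $\alpha_k\in(1,3)$ and the ``off-skeleton'' coefficients as free parameters to be solved for. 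The key observation is that both $\sigma$ and $\tau$ are sums of simple Herglotz fractions with positive weights, so each is real and strictly decreasing between consecutive integers; hence on each interval $(n_k, n_k+1)$ the equations $\sigma(s_k)=0$ and $\tau(s_k)=0$ are each a single scalar constraint, and the sign change guarantees a zero exists in that interval once we tune one coefficient near $n_k$.

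The technical heart is a fixed-point / successive-correction argument run over the blocks $B_k$ of integers lying ``between'' consecutive skeleton points, say $B_k = \{n : n_k \le |n| < n_{k+1}\}$ in a suitable two-sided bookkeeping. Within block $B_k$ one wants to choose the coefficients $\{a_n : n\in B_k\}$ so that \emph{both} $\sigma(s_k)=0$ and $\tau(s_k)=0$; that is two equations, so I would use (at least) two adjustable coefficients per block — for instance $a_{n_k}$ itself (which amounts to selecting $\alpha_k$ in its allowed range) together with one neighbouring coefficient $a_{n_k\pm 1}$ or a small ``ballast'' term placed close to $s_k$. Because $n_{k+1}>2n_k$, the contribution to $\sigma(s_k)$ and $\tau(s_k)$ coming from all \emph{other} blocks is $O(1/n_k)$ (the nearest competing pole is at distance $\gtrsim n_k$ from $s_k$), so the cross-block coupling is weak and summable; this is exactly what makes an iteration converge. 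Concretely I would set up the problem as: given the (small) influence of blocks $j\neq k$, solve the $2\times2$ system inside block $k$; show the solution map is a contraction on $\prod_k (\text{range for }\alpha_k)\times(\text{small ball for the ballast coefficients})$ in an $\ell^1$-type norm, using that the diagonal $2\times2$ blocks are uniformly invertible (the derivative of $(\sigma(s_k),\tau(s_k))$ in the two chosen coefficients has a determinant bounded below, since the two fractions $1/(s_k-n_k)$ and $1/(s_k-m)$ with $m$ the ballast index are genuinely independent and their squares too). The $\ell^1$ summability $\sum_n a_n<\infty$ is then automatic from $a_{n_k}\asymp k^{-2}$ on the skeleton and a geometric-type bound on the ballast within each block, provided each block is not too long — which is arranged by the freedom in choosing $\{n_k\}$.

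The main obstacle I anticipate is keeping the two interpolation conditions \emph{simultaneously} satisfiable while retaining (i) positivity of all $a_n$ and (ii) the normalisation $\alpha_k\in(1,3)$: a naive correction could push $a_{n_k}$ outside $(k^{-2},3k^{-2})$ or force a neighbouring coefficient negative. I would handle this by choosing the skeleton lacunary enough (it suffices that $n_{k+1}/n_k\to\infty$, say, rather than merely $>2$ — the statement only demands $>2$ but we are free to take more) so that the required corrections are as small as we like, hence the fixed point lives in the interior of the positivity/normalisation box; the $\alpha_k$ then end up in, say, $(2-\eps,2+\eps)\subset(1,3)$. A secondary point to be careful about is that $\sigma$ and $\tau$ must \emph{only} vanish at the $s_k$ on the real line to the extent needed downstream — but the statement as phrased asks only that $h$ and $S$ vanish \emph{at} the $s_k$, not that these are their only real zeros, so no extra argument is needed here; the interlacing of the full zero sets with $\zl$ is a consequence of positivity (each fraction sum is decreasing on each $(n,n+1)$) and will be exploited later rather than proved now. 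Once the fixed point is obtained, one reads off $h(z)=\sin\pi z\,\sigma(z)$ and $S(z)=\sin\pi z\,\tau(z)$, notes $h(n)=a_n\neq0$ and $S(n)=(-1)^n a_n^2$, and the proposition is proved.
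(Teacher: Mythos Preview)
Your overall strategy is the paper's: fix a lacunary skeleton, reserve two free parameters per block to hit the two scalar constraints $\sigma(s_k)=\tau(s_k)=0$, make cross-block coupling small by sparseness, and run a contraction in $\ell^\infty$. The gap is precisely the step you treat as routine --- the claim that the diagonal $2\times2$ block has Jacobian bounded away from zero. With only two positive free parameters $a:=a_{n_k}$, $b:=a_m$ and all other contributions small, the unperturbed system reads $pa+qb=0$, $pa^2+qb^2=0$, where $p=\tfrac{1}{s_k-n_k}=2$ and $q=\tfrac{1}{s_k-m}$. Positivity forces $q<0$; substituting $b=-\tfrac{p}{q}a$ into the second equation gives $p\bigl(1+\tfrac{p}{q}\bigr)a^2=0$, so either $a=0$ (violating $a_{n_k}\asymp k^{-2}$) or $q=-p$, i.e.\ $m=n_k+1$ and $b=a$. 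But then the Jacobian $\bigl(\begin{smallmatrix}p&q\\2pa&2qb\end{smallmatrix}\bigr)$ has determinant $2pq(b-a)=0$. So with two free parameters and a genuinely small remainder you get either no positive solution of the required size or a degenerate linearisation, and the contraction cannot start.

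The paper resolves this by introducing a \emph{third} coefficient of the same scale, not free but fixed: $a_{n_k+2}=3k^{-2}$, alongside the free $a_{n_k}=2r_{2k-1}k^{-2}$ and $a_{n_k+1}=r_{2k}k^{-2}$. After rescaling the two equations by $k^2/2$ and $k^4/2$, the diagonal map becomes $(Dr)_{2k-1}=2r_{2k-1}-r_{2k}$, $(Dr)_{2k}=4r_{2k-1}^2-r_{2k}^2$ with target $(1,3)$ (the $1$ and $3$ come precisely from the fixed $a_{n_k+2}$ term), nominal solution $r^\circ=(1,1)$, and Jacobian $\bigl(\begin{smallmatrix}2&-1\\8&-2\end{smallmatrix}\bigr)$ of determinant $4$. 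Then $n_k=M\cdot2^k$ with $M$ large makes both the off-diagonal coupling and the background tails small enough for the contraction, and the fixed point gives $\alpha_k=2r_{2k-1}\in(1,3)$. Once you insert this third fixed coefficient of order $k^{-2}$, the rest of your plan goes through as written.
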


Proposition \ref{technical} is proved using standard 
fixed point arguments of nonlinear analysis. 
We postpone its (rather technical) proof and show first 
how Theorem \ref{maincount} follows from Proposition \ref{technical}.
\medskip
\\
{\it Proof of Theorem \ref{maincount}.}
We have seen in Section \ref{prelim}
that if $\{k_{\lambda}\}_{\lambda \in \Lambda}$ 
is a complete minimal system in $\pw$
with a generating function $G$
and $\Lambda = \Lambda_1\cup\Lambda_2$, then 
we may construct entire functions 
$G_1$ and $G_2$ with zero sets $\Lambda_1$ and $\Lambda_2$
respectively such that each of the functions 
$G_1^*/G_1$ and $G_2^*/G_2$ is a ratio 
of two Blaschke products and $G=G_1G_2$.
Once such functions $G_1$ and $G_2$ are chosen, we have seen
that the system (\ref{syst}) is not complete in $\pw$
if and only if there exists a nonzero sequence $\{a_n\} \in \ell^2$
and entire functions $S_1$ and $S_2$ satisfying the equations 
(\ref{3})--(\ref{4}).

We first choose $S_1$, $S_2$ and $G_2$, and finally construct $G_1$ 
as a perturbation of $S_2$. Let $\{a_n\} \in \ell^1$ and 
$\{n_k\}\subset \na$ be the sequences from Proposition \ref{technical}. 
As in Proposition \ref{technical} put
\begin{align}
h(z) &= \sin \pi z \sum_{n\in \zl} \frac{a_n}{z-n}, \label{dx1}\\ 
S(z) &= \sin \pi z \sum_{n\in \zl} \frac{a_n^2}{z-n}.\notag
\end{align}
Note that for the functions $h$ and $S$ we have
\begin{equation}
\label{5c}
\frac{|h(iy)|}{e^{\pi |y|}}\asymp 
\frac{1}{|y|}, 
\qquad  
\frac{|S(iy)|}{e^{\pi |y|}}\asymp  \frac{1}{|y|}, 
\qquad |y|\ge 1.
\end{equation}

Denote by $S_2$ the genus zero canonical product with the zeros
$n_k+\frac{1}{2}$. Then we may represent $h$ and $S$ as
$$
h=G_2S_2, \qquad S=S_1S_2
$$
for some entire functions $S_1$ and $S_2$. Since $a_n>0$ for any $n\in \mathbb Z$,
the function $h(z)/\sin \pi z$ is a Herglotz function
and so all the zeros of $h$ (and thus of $G_2$) are simple and real.

We need to show that there exists an entire function $G_1$ 
with simple real zeros (different from the zeros of $G_2$) such that
$G=G_1G_2$ is the generating function of some complete and minimal system 
of reproducing kernels, and
\begin{equation}
\label{6c}
\frac{G_1(z)S_1(z)}{\sin \pi z} =  \sum_{n\in \zl} \frac{a_n(-1)^n G(n)}{z-n}
\end{equation}
(this equation is actually equation \eqref{3} for the sequence 
$\{(-1)^na_n\}_n$). Note that \eqref{dx1} gives us 
\eqref{4} for the same sequence.

Put
$$
G_1(z) = \prod\limits_{k=1}^\infty \bigg(1- \frac{z}{n_k+\frac{1}{2} - k^2}\bigg).
$$
Then an easy estimate of the infinite products give us
$$
\frac{|G_1(n)|}{|S_2(n)|} \asymp \frac{|n+k^2 -n_k -\frac{1}{2}|}
{|n-n_k -\frac{1}{2}|}, \qquad
\frac{n_{k-1}+n_k}{2} \le n \le
\frac{n_{k}+n_{k+1}}{2}, 
$$
whence, in particular,
$$
\frac{|G_1(n_k)|}{|S_2(n_k)|} \asymp k^2 \qquad
\text{and} \qquad 
\frac{|G_1(n)|}{|S_2(n)|} \lesssim |n|^{1/2}, \quad n \ne 0.
$$
Since $G = hG_1/S_2$, we have 
$$
|G(n_k)| = \frac{|h(n_k) G_1(n_k)|}{|S_2(n_k)|}
\asymp a_{n_k} k^2 \asymp 1
$$
(recall that in Proposition \ref{technical}, 
$|h(n_k)| = a_{n_k} = \alpha_k k^{-2}$, $\alpha_k \in (1,3)$), 
and 
$$
|G(n)|\lesssim |n|^{1/2} |h(n)|, \qquad n\ne 0.
$$
Hence, $\{G(n)\}_{n\in\zl} \notin \ell^2$, and thus $G\notin \pw$.
However, 
$\Big\{\frac{G(n)}{|n|+1}\Big\}_{n\in\zl} \in \ell^2$, and, using the fact that 
\begin{equation}
\label{7c}
\frac{|G(iy)|}{e^{\pi |y|}} 
\asymp\frac{1}{|y|}, \qquad |y|\ge 1,
\end{equation}
we conclude that $\frac{G(z)}{z-\lambda} \in \pw$
for any zero $\lambda$ of $G$.
Now let us turn to the formula (\ref{6c}). Comparing the residues
at $n\in \zl$  we have $S_1(n)S_2(n) =(-1)^n a_n^2$
and $S_2(n)G_2(n) = a_n(-1)^n$ whence $G_1(n) S_1(n) = a_nG(n)$.
Therefore, the residues in the left and the right-hand sides of (\ref{6c})
coincide. Hence,
$$
\frac{G_1(z)S_1(z)}{\sin \pi z} =  \sum_{n\in \zl} 
\frac{a_n(-1)^n G(n)}{z-n} +H(z)
$$
for some entire function $H$. By the standard growth arguments
$H$ is of zero exponential type.
Note also that $G_1S_1 = S G_1/S_2$ whence, by (\ref{5c}) and the fact that 
$|G_1(iy)| \asymp |S_2(iy)|$, $|y|\to\infty$, we get 
$$
\frac{|G_1(iy) S_1(iy)|}{e^{\pi |y|}}\asymp  \frac{1}{|y|}, 
\qquad |y| \to \infty.
$$
Thus, $H(iy) \to 0$, $|y| \to \infty$, whence 
$H\equiv 0$ and \eqref{6c} is proved. 

It remains to show that $G$ is the generating function of 
a complete and minimal system of reproducing kernels. 
We have already seen
that $G\notin \pw$ but $G\in \pw +z\pw$. Assume now that the zero set 
$\Lambda$ of $G$ is not a uniqueness set for $\pw$. Then there exists
a function $T$ of zero exponential type such that $TG\in \pw$.
Hence, $e^{i\pi z} TG \in H^2(\co_+)$, $e^{-i\pi z} T^*G^* \in H^2(\co_-)$, 
and
it follows from (\ref{7c}) that
$$
\frac{|T(iy)|}{|y|} \asymp \frac{|T(iy)G(iy)|}{e^{\pi |y|}}  \lesssim |y|^{-1/2}, 
\qquad |y|\ge 1.
$$              
Thus $T$ is a constant function whence $T\equiv 0$.
\qed
\bigskip
\\
{\it Proof of Proposition \ref{technical}.}
We will construct the sequence $a_n$ as follows:
let $a_0$ be an arbitrary positive number, 
$a_n = n^{-2}$ for $n \ne 0$ and for $n\ne n_k, n_{k}+1, n_{k}+2$, while
$$
a_{n_{k}} = 2 r_{2k-1} k^{-2}, \qquad 
a_{n_{k}+1} = r_{2k} k^{-2}, \qquad
a_{n_{k}+2} = 3k^{-2} 
$$
for some free parameters $r_{2k-1}$ and $r_{2k}$.
Here $n_k$ is some very sparse
subsequence of positive integers.
The sparseness condition is to be specified
later. Thus the coefficients $a_{n_k}$  have a much slower decay than all
other coefficients.

Using basic tools of nonlinear analysis 
we will show that it is possible to find 
parameters $r_{2k-1}, r_{2k} \in (1/2, 3/2)$, $k\in \na$, 
such that 
\begin{equation}
\label{req}
h(s_k) = S(s_k) = 0, \qquad k\in \na,\quad s_k=n_k+\frac{1}{2}.
\end{equation}
Denote by $\nar$ the set $\bigcup_k\{n_k\}\cup \{n_k+1\}$.
Clearly, (\ref{req}) is equivalent to the system of equations
\begin{equation}
\label{eq1}
\sum_{l=1}^\infty \bigg(\frac{2r_{2l-1}}{l^2 (s_k - n_l)} +
\frac{r_{2l}}{l^2 (s_k - n_l-1)}\bigg) = 
- \sum_{n \notin \nar} \frac{a_n}{s_k-n}
\end{equation}
and 
\begin{equation}
\label{eq2}
\sum_{l=1}^\infty \bigg(\frac{4 r^2_{2l-1}}{l^4 (s_k - n_l)} +
\frac{r^2_{2l}}{l^4 (s_k - n_l-1)}\bigg) = 
- \sum_{n \notin \nar} \frac{a_n^2}{s_k-n}.
\end{equation}
Multiply the equation (\ref{eq1}) by $k^2/2$
and (\ref{eq2}) by $k^4/2$. Using the fact that $s_k = n_k + 1/2$
and that $a_{n_{k}+2} = 3 k^{-2}$,
we may single out the diagonal part which will form the main contribution
to the equations:
\begin{equation}
\label{eq1n}
2r_{2k-1} - r_{2k} +\sum_{l\ne k}
\bigg(\frac{k^2 r_{2l-1}}{l^2 (s_k - n_l)} +
\frac{k^2 r_{2l}}{2l^2 (s_k - n_l-1)}\bigg) =  1-
\sum_{n \notin \nar, n\ne n_{k}+2} \frac{k^2a_n}{2(s_k-n)},
\end{equation}
\begin{equation}
\label{eq2n}
4r^2_{2k-1} - r^2_{2k} +\sum_{l\ne k}
\bigg(\frac{2k^4 r^2_{2l-1}}{l^4 (s_k - n_l)} +
\frac{k^4 r^2_{2l}}{2l^4 (s_k - n_l-1)}\bigg) =  3-
\sum_{n \notin \nar, n\ne n_{k}+2} \frac{k^4 a_n}{2(s_k-n)}.
\end{equation}
\medskip
\\
{\bf Diagonal part of the map.} 
Denote by $r$ the vector $(r_j)_{j=1}^\infty$
and consider the nonlinear mapping $D: \ell^\infty \to \ell^\infty$,
$$
\begin{aligned}
(Dr)_{2k-1} & = 2r_{2k-1} - r_{2k}, \\
(Dr)_{2k} & =  4r^2_{2k-1} - r^2_{2k}. 
\end{aligned} 
$$
Thus, $D$ is a block-diagonal mapping and the solution
of the equation $D(r) = y$ is given by
\begin{equation}
\label{d-1}
(D^{-1}y)_{2k-1} =  r_{2k-1}  = \frac{y_{2k} +y^2_{2k-1}}{4y_{2k-1}}, \quad
(D^{-1}y)_{2k} = r_{2k}  =  \frac{y_{2k}  - y^2_{2k-1}}{2y_{2k-1}},
\quad y_{2k-1}\not=0. 
\end{equation}
If we set $r^\circ_j \equiv 1$, then $D(r^\circ) = y^\circ$
with $y^\circ_{2k-1} = 1$, $y^\circ_{2k} = 3$.
Next, for any $y\in \ell^\infty$ such that $\|y-y^\circ\|_\infty <1/2$
there exists a unique solution $r\in \ell^\infty$ 
of the equation $D(r) = y$.

Moreover, it is easy to see from the
form (\ref{d-1}) of the block-diagonal mapping $D^{-1}$ 
that there exists an absolute constant $A_0>0$ such that
\begin{equation}
\label{dm1}
\|D^{-1} (y) - D^{-1}(z)\|_\infty\le A_0\|y-z\|_\infty
\end{equation}
for all $y, z\in \ell^\infty $ such that $\|y-y^\circ\|_\infty <1/2$, 
$\|z-y^\circ\|_\infty <1/2$.

We need one more estimate for the mapping $D^{-1}$. Put
$F(u) = \frac{u_2 + u_1^2}{4u_1}$, $u=(u_1, u_2)$. 
Then an elementary estimate gives us
$$
|(F(u+\Delta u) - F(u))- (F(v+\Delta v) - F(v))| \le
A_1 \|\Delta u - \Delta v\|_\infty + A_2 \|\Delta v\|_\infty\|u-v\|_\infty
$$
for some absolute constants $A_1$ and $A_2$
whenever $u_1, v_1 \in (1/2, 3/2)$, $u_2, v_2 \in (2, 4)$ 
and $\|\Delta u\|_\infty\le 1/10$, $\|\Delta v\|_\infty\le 1/10$.
An analogous estimate holds for
$F(u) = \frac{u_2 - u_1^2}{2 u_1}$.
Hence, taking into account formula (\ref{d-1}) for $D^{-1}$, we conclude 
that
\begin{equation}
\label{dm2}
\begin{aligned}
\| (D^{-1} (y+\Delta y)  - D^{-1}(y)) &- (D^{-1}(z+\Delta z) - D^{-1}(z))\|_\infty \\
& \le A_1 \|\Delta y - \Delta z\|_\infty + A_2 \|\Delta z\|_\infty \|y-z\|_\infty
\end{aligned}
\end{equation}
for all $y,z\in \ell^\infty$ such that $\|y-y^\circ\|_\infty <1/2$,
$\|z-z^\circ\|_\infty <1/2$, and 
$\|\Delta y\|_\infty <1/10$, $\|\Delta z\|_\infty <1/10$.

Finally we will need the following obvious estimate: there exists
an absolute constant $A_3>0$ such that 
\begin{equation}
\label{dm3}
\|D(r) - D(s)\|_\infty \le A_3 \|r-s\|_\infty, \qquad 
\|r\|_\infty\le 10, 
\ \|s\|_\infty \le 10.
\end{equation}
\medskip
\\
{\bf Sparseness conditions on $\{n_k\}$.} 
Now we impose the {\it first sparseness condition} on the sequence $n_k$:
\begin{equation}
\label{sp1}
\sum_{n \notin \nar, n\ne n_{k}+2} \Big|\frac{k^2a_n}{2(s_k-n)}\Big| 
+
\sum_{n \notin \nar, n\ne n_{k}+2} \Big|\frac{k^4 a_n}{2(s_k-n)}\Big| 
<\frac{1}{200 (A_0+1)}, \qquad k\in \na
\end{equation}
(where $A_0$ is the constant from (\ref{dm1})). 
Since $a_n = n^{-2}$, $n\notin \nar \cup\{n_l+2\}_{l=1}^\infty$,
we have $|a_n|\asymp n_k^{-2}$, 
$n\in [n_k/2, 2n_k]$, $n\ne n_k, n_k+1, n_k+2$, and so the terms
$$
\bigg|\frac{k^2a_n}{2(s_k-n)}\bigg|, \qquad
\bigg|\frac{k^4 a_n}{2(s_k-n)}\bigg|  
$$
may be made arbitrarily small when $n_k$ grows sufficiently fast.
E.g., we may take $n_k = M2^k$ with a sufficiently large constant 
$M$.

Let us consider the vector $y^* \in \ell^\infty$ defined by
$$
y^*_{2k-1} =  1 -
\sum_{n \notin \nar, n\ne n_{k}+2} \frac{k^2a_n}{2(s_k-n)},
\qquad 
y^*_{2k} = 3-
\sum_{n \notin \nar, n\ne n_{k}+2} \frac{k^4 a_n}{2(s_k-n)}.
$$
By (\ref{sp1}), 
$\|y^* - y^\circ\|_\infty < (200(A_0+1))^{-1}$. Hence, 
there exists $r^*$ such that $D(r^*) = y^*$ and, by (\ref{dm1}),
$\|r^* - r^\circ\|_\infty <A_0 \cdot(200(A_0+1))^{-1} <1/200$.

Next we define the mapping $W$ corresponding to the nondiagonal
part of the equations (\ref{eq1n})--(\ref{eq2n}):
$$
\begin{aligned}
(Wr)_{2k-1} & =  \sum_{l\ne k}
\bigg(\frac{k^2 r_{2l-1}}{l^2 (s_k - n_l)} +
\frac{k^2 r_{2l}}{2l^2 (s_k - n_l-1)}\bigg), \\
(Wr)_{2k} & =  \sum_{l\ne k}
\bigg(\frac{2k^4 r^2_{2l-1}}{l^4 (s_k - n_l)} +
\frac{k^4 r^2_{2l}}{2l^4 (s_k - n_l-1)}\bigg).
\end{aligned}
$$
Choosing the sequence $n_k$ sufficiently sparse
(again $n_k = M2^k$ will do the job)
we may achieve our {\it second and third sparseness conditions}:
\begin{equation}
\label{sp2}
\|W(r)\|_\infty \le \frac{1}{200 (1 + A_0 + A_2 A_3)},  \qquad \|r\|_\infty \le 10, 
\end{equation}
and \begin{equation}
\label{sp3}
\|W(r) - W(s)\|_\infty \le \frac{\|r-s\|_\infty}{200A_1},  \qquad \|r\|_\infty\le 10, 
\ \|s\|_\infty \le 10,
\end{equation}
where $A_1$, $A_2$ and $A_3$ are constants from (\ref{dm2})--(\ref{dm3}).
\medskip
\\
{\bf Application of the Fixed Point Theorem.} 
Equations (\ref{eq1n})--(\ref{eq2n}) are equivalent to
$$
D(r) +W(r) = y^*.
$$
Consider the mapping 
$$
T(r) = r^* + r - D^{-1}(D(r) +W(r)).
$$
We show that $T$ is a contractive mapping on the ball  
$B = \{\|r-r^\circ\|_\infty \le 1/100$\}. Then there exists
$r\in B$ such that $T(r) = r$ which is equivalent to 
$D^{-1}(D(r) +W(r)) =r^*$, whence $D(r) +W(r) = D(r^*) = y^*$.
\medskip

{\it 1. $T$ is well-defined on $B$.} Clearly, 
we have $\|D(r) - D(r^\circ)\|_\infty <1/4$  
and $\|W(r)\|_\infty <1/200$
when $\|r-r^\circ\| \le 1/100$.
Thus, $\|D(r)+W(r) - y^\circ\|_\infty < 1/2$
and so $D^{-1}(D(r) +W(r))$ is well-defined.

\medskip
{\it 2. $T(B) \subset B$.} 
We have already seen that $\|r^* - r^\circ\|_\infty  <1/200$. Then
$$
\begin{aligned}
\|T(r) - r^\circ\|_\infty  & \le \|r^* - r^\circ\|_\infty +
\|D^{-1}(D(r)) - D^{-1}(D(r) +W(r))\|_\infty \\
& <\frac{1}{200} + A_0 \|W(r)\|_\infty <\frac{1}{100}
\end{aligned}
$$
by (\ref{dm1}) and (\ref{sp2}).

\medskip
{\it 3. $T$ is a contraction on $B$.} Let $r, s \in B$.
Then
$$
T(r) -T(s) = \big(D^{-1}(D(s) +W(s)) -  D^{-1}(D(s)\big) 
- \big(D^{-1}(D(r) +W(r))- D^{-1}(r)\big).
$$
By (\ref{dm2}) applied to $y=D(s)$, $\Delta y = W(s)$,
and $z=D(r)$, $\Delta z = W(r)$, we have 
$$
\begin{aligned}
\|T(r) -T(s)\|_\infty & \le A_1\|W(s) - W(r)\|_\infty 
+A_2 \|W(r)\|_\infty\|D(s)-D(r)\|_\infty \\
& \le  A_1\|W(s) - W(r)\|_\infty  + A_2 A_3 \|W(r)\|_\infty\|r-s\|_\infty \le
\frac{\|r-s\|_\infty}{100}.
\end{aligned}
$$
We used estimate (\ref{dm3}) in the second inequality  
and (\ref{sp2}) and (\ref{sp3}) in the last  one.
Thus, $T$ is a contractive mapping from $B$ to $B$ 
and we conclude that $T$ has a fixed point.
\qed

\section{Extensions to the de Branges spaces}
\label{secdeb}

\subsection{Preliminary remarks}\label{51}
We start with a general construction of 
functions biorthogonal to a system of reproducing kernels.
Let $\he$ be a de Branges space, and let $\phi$ 
be the corresponding phase function. 
As usual, we write $E=A-iB$. To avoid inessential difficulties we will always
assume that 
$A\notin \he$. 
The reproducing kernel of ${\mathcal H} (E)$ can be written as 
$$
K_w(z)= 
\frac{\overline{A(w)} B(z) -\overline{B(w)}A(z)}{\pi(z-\overline w)}.
$$
Let $\Lambda\subset \CC$ be such that the system of reproducing kernels
$\{K_\lambda\}_{\lambda\in \Lambda}$ of the space $\he$ is exact.
Then there exists the generating function, that is, an 
entire function $G \in \he+z\he$, such that $G H\notin \he$ for any nontrivial entire function $H$, and  
vanishing exactly on the set $\Lambda$. 
The biorthogonal system to  $\{K_\lambda\}_{\lambda\in\Lambda}$ 
is given by 
$$
g_\lambda(z):= \frac{G(z)}{G'(\lambda)(z-\lambda)}.
$$

We will assume that $\{g_\lambda\}_{\lambda\in\Lambda}$ is also an exact system in $\he$
(recall that this is the case, e.g., when $\phi'\in L^\infty(\rl)$
\cite{fric} or when $\phi'$ has at most power growth and $\Theta=E^*/E$ 
has no finite derivative at $\infty$ \cite{bb}).

Denote by $T=\{t_n\}$ the zero set of $A$ (assume that $T\cap\Lambda=\emptyset$) 
and recall that the functions 
$$
\frac{A(z)}{z-t_n} = \pi i\frac{K_{t_n}(z)}{E(t_n)} 
$$
form an orthogonal basis in $\he$ \cite[Theorem 22]{br}
and $\big\|\frac{A(z)}{z-t_n}\big\|^2 =\pi\phi'(t_n)$. 
Then every $h \in \he$ can be written as
\begin{equation}
h(z)=A(z) \sum_n\frac{\overline a_n \mu_n^{1/2}}{z-t_n}, \qquad \{a_n\}\in\ell^2,
\label{dx2}
\end{equation}
where $\mu_n=1/\phi'(t_n)$, 
$$
\sum_n \frac{\mu_n}{1+t_n^2} <\infty.
$$

Let $h \in \he$ be orthogonal to 
$\{g_\lambda\}_{\lambda\in \Lambda_1} \cup \{K_\lambda\}_{\lambda\in \Lambda_2}$.
Then
\begin{equation}
\label{ext1}
\sum_n\frac{\overline a_n \mu_n^{1/2}}{z-t_n} = \frac{G_2(z)S_2(z)}{A(z)}
\end{equation}
for some entire function $S_2$. 
As in the Paley--Wiener case we assume that $G_2$
is an entire function which vanishes exactly on $\Lambda_2$ 
and $G_2^*/G_2 = B_1/B_2$ for some Blaschke products $B_1$ and $B_2$.
On the other hand, since $h\perp g_\lambda$, $\lambda\in 
\Lambda_1$,
we obtain 
\begin{equation}
\label{ext2}
\sum_n \frac{G(t_n)}{E(t_n)} \frac{a_n \mu_n^{1/2}}{z-t_n} =
i \frac{G_1(z)S_1(z)}{A(z)}
\end{equation}
for some entire function $S_1$ (argue as in the Paley--Wiener case).
Comparing the residues we get
\begin{equation}
S_1(t_n)G_1(t_n) = -i 
\frac{a_n \mu_n^{1/2}A'(t_n) G(t_n)}{E(t_n)},
\label{deBrange1}
\end{equation}
and
\begin{equation}
S_2(t_n)G_2(t_n) = \overline a_n \mu_n^{1/2} A'(t_n).
\label{deBrange2}
\end{equation}
Hence, for $S = S_1S_2$, we have
$$
S(t_n) = -i |a_n|^2 \mu_n (A'(t_n))^2/E(t_n).
$$
Since $A'(t_n) = (-1)^n |E(t_n)| \phi'(t_n)$
(the phase function $\phi$ is chosen in such a way that $\phi(t_n) = \pi/2+\pi n$), we get
\begin{equation}
\label{ext3}
S(t_n) 
=|a_n|^2 A'(t_n).
\end{equation} 

In what follows we need the following  theorem due to M.G.~Krein
(see, e.g., \cite[Chapter I, Section 6]{hj}):
{\it If an entire function $F$ is of bounded type both in $\mathbb{C}_+$
and in $\mathbb{C}_-$, then $F$ is of finite exponential type. 
If, moreover, $F$ is in the Smirnov class
both in $\mathbb{C}_+$ and in $\mathbb{C}_-$, then $F$
is of zero exponential type.}
Recall that a function $f$ analytic in $\mathbb{C}_+$ is said 
to be of {\it bounded type}, if
$f=g/h$ for some functions $g$, $h\in H^\infty(\mathbb{C}_+)$.
If, moreover, $h$ may be taken to be outer, we say that $f$
is in \textit{the Smirnov class in $\mathbb{C}_+$}.

In particular, any analytic function $f$ such that $\ima f>0$ in $\mathbb{C}_+$
is in the Smirnov class. In what follows we use the fact that 
if we put $\Theta = E^*/E$, then $\Theta$ is inner, and 
both $A/E = 1+\Theta$ 
and $E/A = (1+\Theta)^{-1}$ are in the Smirnov class.
Another useful observation is that  if $G$ is a generating function of some
exact system of reproducing kernels, then both $G/E$ and $G^*/E$
are of the form $Bh$, where $B$ is a Blaschke product and $h$ is outer
in $\mathbb{C}_+$.
Indeed, if $G/E$ has an exponential factor, i.e., $G(z)/E(z) = e^{iaz}
B(z)h(z)$, where $a>0$ and $h$ is
outer, then the function
$$
z\mapsto E(z)\frac{e^{iaz} - 1}{z} B(z)h(z)
$$ belongs to $\mathcal{H}(E)$
and vanishes at $\Lambda$.


From now on we assume that $\phi$ is of {\it tempered growth}, that is, 
\begin{equation}
\label{tempered}
\phi'(t) =O(|t|^N), \qquad |t| \to\infty, 
\end{equation}
for some $N$. It follows from (\ref{tempered}) that,
for any $F\in\he$, 
$$
\frac{|F(x)|}{|E(x)|} \le \frac{\|K_x\|_E\|F\|_E}{|E(x)|} =
\Big(\frac{\phi'(x)}{\pi}\Big)^{1/2} \|F\|_E \lesssim (|x|+1)^{N/2}, \quad x\in\rl.
$$
Using the same arguments as in the proof of Lemma~\ref{g1s1inpw} 
we get $G_1S_1\in \he + z\he$. Hence,
\begin{equation}
\label{ext0}
GS
\in 
\mathcal{P}_{\frac{N}{2}+1} \cdot \mathcal{H}(E^2),
\end{equation}
where $\mathcal{P}_M$ is the set of polynomials of degree at most $M$.

Arguing analogously to the proof of Lemma~\ref{growth1}
we obtain the following growth restriction.

\begin{lemma} \label{growth2}
Assume that $\phi$ satisfies \eqref{tempered}.
Let $h\in\he$ be orthogonal to some system 
$\{g_\lambda\}_{\lambda\in \Lambda_1} \cup \{K_\lambda\}_{\lambda\in \Lambda_2}$
and let $(S_1, S_2)$ be the corresponding pair.
Then $S \in  \mathcal{P}_M \cdot \mathcal{H}(E)$ 
for some $M$ depending only on $N$ and
\begin{equation}
\label{as2}
\bigg|\frac{S(iy)}{A(iy)}\bigg| \gtrsim \frac{1}{|y|^{K}}, \qquad |y|\to\infty,
\end{equation}
for some $K>0$. 
\end{lemma}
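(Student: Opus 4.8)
The strategy is to mimic the Paley--Wiener argument of Lemma~\ref{growth1}, now in the de Branges setting, replacing $\sin\pi z$ by $A$ and $\pw$ by $\he$. First I would recall from \eqref{ext1}, \eqref{ext2} that $G_2S_2 = h\cdot(A/(z-t_n)$-type sum$)$, so that $G_2 S_2 = h\in\he$, and that $G_1S_1\in\he+z\he$ by the argument preceding \eqref{ext0}. Consequently $GS = (G_1 S_1)(G_2 S_2)$ lies in $\mathcal{P}_{\frac N2+1}\cdot\mathcal{H}(E^2)$ as recorded in \eqref{ext0}. Next, following the proof of Lemma~\ref{growth1}, I would introduce the function $Q\in\he$ solving the interpolation problem $Q(t_n) = |a_n|^2 A'(t_n)$ (which is well posed because $\{A(z)/(z-t_n)\}$ is an orthogonal basis and, by \eqref{ext3}, $\sum_n |a_n|^2 \mu_n\asymp \|h\|^2_E<\infty$ is exactly the summability condition needed). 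Then $\widetilde S := S - Q$ vanishes on $T=\{t_n\}$, so $\widetilde S = HA$ for some entire $H$, and the goal becomes to bound the growth of $H$ and show $S = HA + Q \in\mathcal P_M\cdot\he$.

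For the growth bound on $H$: since $GS\in\mathcal P_{\frac N2+1}\cdot\mathcal H(E^2)$, $GQ\in\mathcal P_{M'}\cdot\mathcal H(E^2)$ (as $Q\in\he$ and $G\in\he+z\he$), and $G\widetilde S = G H A$, we get $GHA\in \mathcal P_{M''}\cdot\mathcal H(E^2)$ for an appropriate $M''$ depending only on $N$. Dividing by $A$ and using that $A/E = 1+\Theta$ and $E/A=(1+\Theta)^{-1}$ are both in the Smirnov class (the observation recorded just before \eqref{tempered}), one deduces that $GH\in\mathcal P_{M''}\cdot\he$, hence $GH/E$ and $(GH/E)^*$ are in the Smirnov class in $\mathbb C_+$ and $\mathbb C_-$ up to a polynomial factor. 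Since $G/E$ and $G^*/E$ are of the form (Blaschke)$\times$(outer) with no exponential factor (again by the observation preceding \eqref{tempered}), Krein's theorem quoted in the excerpt forces $H$ to be of zero exponential type; combined with the polynomial bound on $|G(iy)H(iy)|/e^{\text{type}(G)|y|}$ this makes $H$ a polynomial of degree at most some $M$ depending only on $N$. Therefore $S = HA + Q\in\mathcal P_M\cdot\he$, proving the first assertion.

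For the lower bound \eqref{as2}: here I would exploit that the residue computation \eqref{ext3} gives $S(t_n) = |a_n|^2 A'(t_n)$ with $\{a_n\}\in\ell^2$ nonzero, so $S$ is not identically zero and — after passing, if needed, to another orthogonal basis $\{K_{t_n'}\}$ coming from a different phase level $\alpha$, which does not change $S_2 = h/G_2$ — the zeros of $S$ interlace with $T$ in the manner of Corollary~\ref{zeros1}. Since $S\in\mathcal P_M\cdot\he$, we control the growth of $|S(iy)|$ from above by $\lesssim |y|^M |E(iy)|$; and since $S/A$ is, up to the polynomial factor and the Smirnov factor $E/A$, a function of Herglotz-type or at least of bounded type in each half-plane with a known canonical product structure in the denominator, one gets $|S(iy)/A(iy)|\gtrsim |y|^{-K}$. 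Concretely: writing $S/(A) $ and using that $|A(iy)|\asymp|E(iy)|$ off the real axis together with the Poisson-type lower bound for a bounded-type function whose zeros interlace the zeros of $A$, the ratio cannot decay faster than polynomially.

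\textbf{Main obstacle.} The delicate point is the second assertion \eqref{as2}: in the Paley--Wiener case the analogous lower bound was immediate from $\sin\pi z$ behaving like $e^{\pi|y|}/2$ and $S\in\pw+\mathbb C\sin\pi z$, but here $E$ (hence $A$) can have irregular growth along the imaginary axis, and $S/A$ need not be Herglotz once $b\ne 0$-type terms or polynomial factors enter. The right way to handle it is to argue via bounded type: $S/(PA)$ for a suitable real polynomial $P$ is of bounded type in $\mathbb C_\pm$ with explicitly controlled Smirnov/Blaschke factorization inherited from $G/E$, $h/E$, and $1/(1+\Theta)$, so that its modulus along $i\mathbb R$ is bounded below by a negative power of $|y|$ coming solely from those polynomial and Blaschke contributions. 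Making this factorization bookkeeping precise — in particular tracking that no genuine exponential decay is introduced — is where the real work lies; everything else is a routine transcription of the Paley--Wiener lemmas.
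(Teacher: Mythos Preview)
Your overall strategy for the first assertion ($S\in\mathcal P_M\cdot\he$) follows the paper's, but there is a genuine error in setting up the interpolation. The function $Q\in\he$ solving $Q(t_n)=|a_n|^2 A'(t_n)$ need \emph{not} exist: writing $Q(z)=A(z)\sum_n |a_n|^2/(z-t_n)$, the membership condition $Q\in\he$ reads $\sum_n |a_n|^4\phi'(t_n)<\infty$, and under the tempered-growth hypothesis $\phi'(t_n)\lesssim|t_n|^N$ this is not implied by $\{a_n\}\in\ell^2$. (Your claimed condition $\sum_n|a_n|^2\mu_n<\infty$ is neither $\|h\|_E^2$ --- which is $\pi\sum_n|a_n|^2$ --- nor the correct criterion for $Q$.) The paper handles this by first dividing out enough zeros $s_1,\dots,s_M$ of $S$ so that $\tilde S(z)=S(z)/\prod_j(z-s_j)$ satisfies $\sum_n|\tilde S(t_n)|^2/(|E(t_n)|^2\phi'(t_n))<\infty$, and only then interpolates $\tilde S(t_n)$. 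After this fix the Smirnov/Krein step you outline matches the paper's, though to bound the degree of $H$ the paper invokes the completeness of $\{K_\lambda\}_{\lambda\in\Lambda}$ (too many zeros of $H$ would yield a nonzero $G\tilde H\in\he$, a contradiction) rather than a growth estimate along $i\RR$, which is not so straightforward when $|E(iy)|$ is irregular.

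The more serious gap is your argument for the lower bound \eqref{as2}. Knowing that $S/A$ is of bounded type with no exponential factor only rules out \emph{exponential} decay along $i\RR$; it gives no polynomial lower bound, since the outer part of a bounded-type function can decay faster than any power along the imaginary axis. Tracking the factorization through $G/E$, $h/E$, $1/(1+\Theta)$ does not help: you would need to control the outer part of $S_1$ (equivalently of $G_1$) separately, and no such information is available. The paper's route is entirely different and concrete. Having established $S=H_1(Q+AH_2)$ with $H_1,H_2$ polynomials and $Q\in\he$, one splits into two cases. If $H_2\not\equiv0$, then $Q(iy)/A(iy)\to0$ from the series representation, so $S(iy)\sim H_1(iy)H_2(iy)A(iy)$ and \eqref{as2} is immediate. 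If $H_2\equiv0$, then $Q(z)/A(z)=\sum_n |a_n|^2\big(t_n^{-M}+(b_n+ic_n)t_n^{-M-1}\big)/(z-t_n)$ with bounded $b_n,c_n$, and one computes $-\ima\big(Q(iy)/A(iy)\big)$ term by term: for $M$ even the dominant contributions all have the same sign when $|y|$ is large, and expanding in $1/y$ yields the polynomial lower bound. Your bounded-type sketch does not capture this sign mechanism and, as stated, would not prove \eqref{as2}.
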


\begin{proof}
By \eqref{ext3} we have 
$$
\frac{|S(t_n)|}{|E(t_n)| (\phi'(t_n))^{1/2}} = |a_n|^2
(\phi'(t_n))^{1/2} \lesssim |a_n|^2 |t_n|^{N/2},
$$
and, dividing out sufficiently many zeros $s_1, \dots, s_M$ of $S$ we obtain that
$$
\sum_n \frac{|\tilde S(t_n)|^2}{|E(t_n)|^2 \phi'(t_n)} <\infty, \qquad
\tilde S(z) = \frac{S(z)}{(z-s_1)\cdots (z-s_M)}.
$$
Now let $Q$ be the (unique) function in $\he$ which solves
the interpolation problem $Q(t_n) = \tilde S(t_n)$.
Using (\ref{ext0}) and an analogous estimate for $GQ$,
we obtain that 
$G(\tilde S -Q) \in \mathcal{P}_M\cdot \mathcal{H}(E^2)$.
Since $\tilde S - Q$ vanishes on $\{t_n\}$, we have 
$G(\tilde S -Q) = GAH \in \mathcal{P}_M\cdot \mathcal{H}(E^2)$
for some entire function $H$. We want to show that $H$
is a polynomial of degree at most $M+1$, 
whence $\tilde S = Q+AH\in \mathcal{P}_{M +1}\cdot \mathcal{H}(E)$.

By the remarks after the formulation of Krein's theorem,   
$(GA)/E^2$ and $(G^*A)/E^2$ are of
the form $Bh$,
where $B$ is a Blaschke product and $h$ is outer  in $\mathbb{C}_+$. Since
$GAH = g \in \mathcal{P}_M \cdot \mathcal{H}(E)$,
we see that $H = \frac{g}{E^2} \cdot \frac{E^2}{GA}$ is in the Smirnov class
in $\mathbb{C}_+$ and the same holds for $H^*$. Then, by
Krein's theorem, $H$ is of zero exponential type.


If $H$ has at least $M+2$ zeros, then dividing them out
we obtain an entire function $\tilde H$ such that 
$GA\tilde H \in \mathcal{H}(E^2)$ and $|G(iy)\tilde H(iy)|/|E(iy)| = o(y^{-1})$, 
$|y|\to\infty$ (we use the fact that $|A(iy)|/|E(iy)| \gtrsim y^{-1}$,
$y\to +\infty$). Let $v_n$ be such that $\phi(v_n) =  \pi n$
(thus, $\{v_n\}$ is the support of another orthogonal family 
of reproducing kernels). Since $|A(v_n)| = |E(v_n)|$,
we conclude that 
$$
G(v_n)\tilde H(v_n)/E(v_n) \in L^2(\nu),
\qquad \nu = \sum_n (\phi'(v_n))^{-1} \delta_{v_n}. 
$$
Now it remains to apply \cite[Theorem 26]{br} to conclude that 
$G\tilde H\in \he$, a contradiction to the fact that $G$
is the generating function of a complete system of kernels.

We have shown that $S=H_1(Q + AH_2)$ for some polynomials $H_1,H_2$.
It follows from the representation of functions in $\mathcal{H}(E)$
(formula \eqref{dx2}) that $Q(iy) + A(iy)H_2(iy) \sim A(iy)H_2(iy)$
for any $Q\in \mathcal{H}(E)$ and any nonzero polynomial $H_2$.
Thus, in this case $|S(iy)| \gtrsim |A(iy)|$, $|y|\to \infty$,
and (\ref{as2}) is trivial.
In the case when $H_2 \equiv 0$ and $S=H_1 Q$
we use that the function $Q$ is the solution of the interpolation problem
$$
\begin{aligned}
Q(t_n) & =  \frac{S(t_n)}{(t_n-s_1)\cdots (t_n-s_M)} \\
& = 
\frac{A'(t_n) |a_n|^2}{(t_n-s_1)\cdots (t_n-s_M)} = 
A'(t_n) |a_n|^2 \bigg(\frac{1}{t_n^M} + \frac{b_n+ic_n}{t_n^{M+1}}\bigg),
\end{aligned}
$$
where $\{b_n\}_n$ and $\{c_n\}_n$ are bounded sequences, and assume
without loss of generality that $M$ is even and 
$t_n \ne 0$. Then 
$$
\frac{Q(z)}{A(z)} = \sum_n \frac{|a_n|^2}{z-t_n}
\bigg(\frac{1}{t_n^M} + \frac{b_n+ic_n}{t_n^{M+1}}\bigg),
$$
and
$$
-\ima \frac{Q(iy)}{A(iy)}  = 
\sum_n \frac{|a_n|^2}{y^2 + t_n^2} \bigg(\frac{y}{t_n^M} +\frac{b_n y}{t_n^{M+1}} + 
\frac{c_n}{t_n^{M}}\bigg).
$$
All the sums in the brackets except, possibly, 
a finite number are positive when $y\to +\infty$ and 
negative when $y\to-\infty$. Expanding the right-hand side in powers of $1/y$, we
deduce \eqref{as2}. 
\end{proof}

It follows from (\ref{as2}) that $S^*/S$ is a ratio of two Blaschke
products, i.e., has no exponential factor.
We show now that the same is true for each of the functions $S_2^*/S_2$ and $S_1^*/S_1$. 
Suppose that $G_2^*S_2^*/(G_2S_2)$ is not a ratio
of Blaschke products, i.e., let
$G_2^*S_2^*/(G_2S_2) = e^{ibz}B_1/B_2$, where $B_1$ and $B_2$ are meromorphic Blaschke products
and $b\in \mathbb{R}$. Assume that $b>0$ (the case $b<0$ is analogous). Then
the function $e^{i c z} S_2G_2$, $0<c\le b$, is also in $\mathcal{H}(E)$
and formulas \eqref{ext1} and \eqref{ext2} will hold also for
the functions  $e^{i c z}S_2G_2$  and  $e^{-icz}S_1G_1$, $0<c < b$,
with $\{e^{-i c t_n}a_n\}_n$ in place of $\{a_n\}_n$. Hence, 
$(S_1e^{- i c z},S_2e^{i c z})\in\Sigma(\Lambda_1, \Lambda_2)$ and 
$((1+e^{-icz})S_1,(1+e^{icz})S_2)\in\Sigma(\Lambda_1, \Lambda_2)$.
Now, by Lemma~\ref{growth2},  
the function $\tilde S(z) = S(z)(1+e^{i c z})(1+e^{-i c z})$ 
belongs to $\mathcal{P}_M \cdot \mathcal{H}(E)$, whence
$\tilde S/A$ is of Smirnov class in the upper half-plane.
However, this contradicts to \eqref{as2}.
Thus, $S_2/S^*_2$ and $S_1/S^*_1$ are ratios of Blaschke products.


Now by an argument, analogous to that in the proof of Lemma~\ref{star}, the pair
$(S_1^*, S_2^*)$ also corresponds to some function orthogonal to
$\{g_\lambda\}_{\lambda\in \Lambda_1} \cup \{K_\lambda\}_{\lambda\in \Lambda_2}$.
Thus, we may always find functions $S_1,S_2$ which are real on $\rl$.
By \eqref{ext3}, the function $S$ changes its sign at adjacent points $t_n$
(as usual we assume that the basis is chosen in such a way that 
all coefficients $a_n$ are nonzero), and thus,
there is a zero of $S$ in each of the intervals $(t_n, t_{n+1})$.
We have an analog of Corollary \ref{zeros1}.

\begin{lemma}
\label{zeros2}
Assume that $\phi$ satisfies \eqref{tempered}.
If a pair $(S_1, S_2)$ corresponds 
to a function $h \in \he$ orthogonal to some system 
$\{g_\lambda\}_{\lambda\in \Lambda_1} \cup \{K_\lambda\}_{\lambda\in \Lambda_2}$
and $S_1$ and $S_2$ are real on $\rl$, then
$S = S_0 H$, where $S_0$ has exactly one zero in any interval
$(t_n, t_{n+1})$ and $H$ is a polynomial of degree bounded by $M=M(N)$.
\end{lemma}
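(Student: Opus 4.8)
The plan is to mimic the proof of Corollary~\ref{zeros1}, with the Paley--Wiener space replaced by $\he$ and the function $\sin\pi z$ replaced by $A$. First I would observe that since $S_1,S_2$ are real on $\rl$ and, by \eqref{ext3}, $S(t_n)=|a_n|^2 A'(t_n)$ has alternating signs (recall $A'(t_n)=(-1)^n|E(t_n)|\phi'(t_n)$), the function $S$ changes sign at adjacent points $t_n$, so it has at least one zero in each interval $(t_n,t_{n+1})$. Picking one such zero in each interval, I form the (principal value) canonical product $S_0$ over this interlacing sequence; then $S=S_0 H$ for an entire function $H$ which is real on $\rl$. The task is to bound the exponential type and the polynomial degree of $H$.

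The key input is Lemma~\ref{growth2}: $S\in\mathcal{P}_M\cdot\he$ for some $M=M(N)$, and $|S(iy)/A(iy)|\gtrsim |y|^{-K}$ as $|y|\to\infty$. On the other hand, since $\{S_0=0\}$ interlaces the sequence $\{t_n\}=\{A=0\}$ and these two sequences have the same density (in the appropriate de~Branges sense governed by $\phi$), a comparison of canonical products should give $|S_0(iy)|\asymp |A(iy)|/(1+|y|)$ — more precisely $|S_0(iy)/A(iy)|\gtrsim |y|^{-1}$ as $|y|\to\infty$, possibly after trivially discarding finitely many points so the products are genuinely interlacing in the principal-value sense. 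Combining, $H=S/S_0$ satisfies $|H(iy)|\lesssim |y|^{M}\cdot |A(iy)|\cdot |y|\,|A(iy)|^{-1}\cdot(\text{lower order})$, so $|H(iy)|\lesssim (1+|y|)^{M'}$ for some $M'$ depending only on $N$; in particular $H$ is of zero exponential type. Here one uses that $S_0/A$, being a ratio of canonical products over interlacing real sequences, is of bounded type in each half-plane, so $H=(S/\text{poly})\cdot(\text{poly})\cdot A/S_0 \cdot (1/A)\cdot(\dots)$ stays in the Smirnov class in $\CC_+$ and in $\CC_-$; then Krein's theorem (quoted in the excerpt) forces zero exponential type.

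It remains to upgrade ``zero exponential type with polynomial growth along the imaginary axis'' to ``polynomial of degree $\le M(N)$''. This is the step where I would be most careful: the Phragm\'en--Lindel\"of principle applied to $H$ (entire, zero exponential type, polynomially bounded on $i\rl$) gives that $H$ is a polynomial, and the degree bound comes from the growth estimate $|H(iy)|\lesssim (1+|y|)^{M'}$ together with the real-line growth coming from $|F(x)|/|E(x)|\lesssim (1+|x|)^{N/2}$ for $F\in\he$. The main obstacle is precisely getting the clean two-sided estimate $|S_0(iy)/A(iy)|\asymp |y|^{-1}$: one needs that the chosen zeros of $S$ interlace $\{t_n\}$ tightly enough that the canonical products are comparable, which is where the de~Branges structure (the phase function $\phi$, and the bound $\phi'\in L^\infty$ or at most tempered growth) does the work, analogously to how $\sin\pi z$ plays this role in the Paley--Wiener case. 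I would also need to double-check the bookkeeping of which finitely many zeros of $S$ (the $s_1,\dots,s_M$ of Lemma~\ref{growth2}) get absorbed into $H$ versus $S_0$, so that the degree of $H$ is controlled by $M$ plus an absolute constant. Once this is in place, $H$ being a polynomial of degree $\le M(N)$ and $S=S_0 H$ gives exactly the statement.
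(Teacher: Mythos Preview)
Your proposal is correct and follows exactly the route the paper intends: the paper states Lemma~\ref{zeros2} as ``an analog of Corollary~\ref{zeros1}'' without writing out a separate proof, and your outline is precisely that analog, with $\sin\pi z$ replaced by $A$, Lemma~\ref{growth1} replaced by Lemma~\ref{growth2}, and Krein's theorem supplying the zero-exponential-type step. One small sharpening: the estimate $|S_0(iy)/A(iy)|\gtrsim |y|^{-1}$ that you flag as the delicate point is in fact automatic from interlacing alone (no need for $\phi'\in L^\infty$): since the zeros of $A$ and $S_0$ interlace, $\pm A/S_0$ is Herglotz in $\CC_+$, hence grows at most linearly along $i\RR$, giving both the lower bound you want and the Smirnov-class membership needed for Krein's theorem; the upper bound $|S(iy)/A(iy)|\lesssim |y|^M$ then follows from $S\in\mathcal P_M\cdot\he$ via the basis expansion \eqref{dx2}.
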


\subsection{Proof of Theorem \ref{main3}}
Without loss of generality assume that $\phi$ is unbounded both from below and from above, and $\Lambda \cap \{t_n\} =\emptyset$,
where $\phi(t_n) = \pi n$, $n\in\zl$.
Let $f$ and $h$ be orthogonal to the system (\ref{syst3}),
$$
f(z)=A(z) \sum_n\frac{\overline a_n \mu_n^{1/2}}{z-t_n}, 
\qquad
h(z)=A(z) \sum_n\frac{\overline b_n \mu_n^{1/2}}{z-t_n}, 
\qquad \{a_n\}, \ \{b_n\} \in\ell^2.
$$
Let $(S_1, S_2)$ and $(T_1, T_2)$  be the corresponding pairs
of entire functions such that $S_1$, $S_2$, $T_1$ and $T_2$ 
are real on $\rl$.
Using the equations \eqref{deBrange1}--\eqref{deBrange2} 
in the same way as in the proof of Theorem \ref{main1}, we obtain
$$
S_1(t_n)T_2(t_n) = T_1(t_n)S_2(t_n)  = a_n b_n |E(t_n)| \phi'(t_n) \beta_n,
$$
where $|\beta_n| =1$. The hypothesis $\sup_n |\phi'(t_n)|<\infty$
implies that
$$
\sum_n \frac{|S_1(t_n)T_2(t_n)|^2}{|E(t_n)|^2\phi'(t_n)} = 
\sum_n a_n^2b_n^2
<\infty.
$$
Since $\{K_{t_n}\}$ is an orthogonal basis in $\he$
and $\|K_{t_n}\|_E^2 = |E(t_n)|^2 \phi'(t_n)/\pi$,
we conclude that there exists a unique function $Q\in \he$
which solves the interpolation problem
$Q(t_n) = a_n b_n |E(t_n)| \phi'(t_n) \beta_n$. Then 
$$
T_1(z) S_2(z) = Q(z) +a(z)A(z), \qquad 
S_1(z) T_2(z) = Q(z) +b(z)A(z),
$$  
for some entire functions $a$ and $b$.
We show now that $a$ and $b$ are polynomials. 

Note that by Lemma~\ref{growth2} 
the functions $S=S_1S_2$ and $T=T_1T_2$ 
as well as $(S_1+T_1)(S_2+T_2)$ and $(S_1+iT_1)(S_2-iT_2)$
are in $\mathcal{P}_M \cdot \mathcal{H}(E)$. 
Hence, the functions
$S_1T_2$ and $S_2T_1$, and, consequently,
the functions 
$S_1T_2-Q$ and $S_2T_1 -Q$ are in $\mathcal{P}_M \cdot \mathcal{H}(E)$.

Now assume that $F=AH$ for some entire function $H$, and 
$F\in \mathcal{P}_M \cdot \mathcal{H}(E)$.
First, since $E/A$ and $F/E$ are in the Smirnov class in $\mathbb{C}_+$,
we conclude that, by Krein's theorem, $H$ is of zero exponential type. 
We claim that $H$ must be a polynomial.

Indeed, if $H$ has at least $M$ zeros $z_j$, then dividing 
$F$ by $\prod_{j=1}^M (z-z_j)$ we obtain a function in $\he$ 
which vanishes on $\{t_n\}$ and, thus, is identically zero.
Applying this argument to $S_1T_2-Q$ and $S_2T_1 -Q$
we conclude that $a$ and $b$ are polynomials. 

Now assume that $a \ne 0$.
Let us denote by $s_m$ the zero of $S_2$ 
such that $|\phi(s_m) - \phi(t_m)| \le\pi/2$
whenever such a zero exists. Then
$$
Q(s_m) + a(s_m)  A(s_m) = 0.
$$
Note that $\{t_m\}$ is separated sequence (i.e., $\inf_{n\ne m}|t_n-t_m|>0$)
and so $s_m$ is the union of two separated sequences. 
By a simple variant of Carleson embedding theorem for the de Branges spaces
with $\phi' \in L^\infty(\rl)$ (an explicit statement 
may be found in \cite[Theorem 5.1]{bar00}, 
though the proof may be recovered already from \cite[Theorem 2]{vt}) 
we have
$$
\sum_m \frac{|Q(s_m)|^2}{|E(s_m)|^2}<\infty
$$
for any $Q\in \he$, whence 
$$
\sum_m \frac{|A(s_m)|^2}{|E(s_m)|^2}<\infty.
$$
By the definition of the phase function, $|A(s_m)| = 
|E(s_m) \sin(\phi(s_m) - \phi(t_m))|$.
Thus, we obtain that 
$$
\sum_m \sin^2(\phi(s_m) - \phi(t_m))
\asymp \sum_m (\phi(s_m) - \phi(t_m))^2  <\infty
$$

To complete the proof we apply once again 
the argument with the shift of the basis.
The zeros of $S_2$ do not depend on the choice of the basis.
Expanding with respect to another basis, say $\{K_{\tilde t_n}\}$, 
with $\phi(\tilde t_n) = \delta +\pi n$ for some small $\delta$,
we get that $\sum_m (\phi(s_m) - \phi(\tilde t_m))^2 < \infty$.
However, $|\phi(t_m) - \phi(\tilde t_m)| = \delta$
and we come to a contradiction.

Thus, we have proved that $a = b =0$, and so $S_1 T_2 = T_1 S_2 = Q$.
Since $S_1$ has no common zeros with $S_2$ (we choose the basis so that all $a_n$
are nonzero) we conclude that the zero sets of $S_2$ and  $T_2$ coincide, 
and, thus, $f$ is proportional to $h$.

\begin{remark}
{\rm  It is easy to show that if 
$\phi$ is of tempered growth, then the orthogonal complement 
to the system (\ref{syst3}) is always finite dimensional, with a bound
on the dimension depending only on $N$. 
Indeed, by Lemma~\ref{zeros2}, there exists $M = M(N)$ such that 
for any pair $(S_1, S_2)$ which corresponds to 
a function $f$ in the orthogonal complement to (\ref{syst3})
and is real on $\RR$,  
we have $S=S_0 H$, $H\in \mathcal{P}_M$. In particular, any interval 
$(t_n, t_{n+1})$ contains at most $M+1$ zeros of $S$.

Now assume that the orthogonal complement to (\ref{syst3}) contains at least 
$M+3$ linearly independent vectors $f_{j,0}$, $j=1, \ldots M+3$, such that
the corresponding functions $S_{1,j,0}$, $S_{2,j,0}$ are real on $\RR$. 
Considering linear combinations (with real coefficients)
$f_{j,1}= f_{j,0}- \alpha_j f_{M+3,0}$, $j=1, \ldots, M+2$, 
we may achieve that the functions $S_{1,j,1}$ 
corresponding to $f_{j,1}$ have a common zero at $x_1\in (t_0,t_1)$.
Repeating this procedure we obtain a nonzero 
function $f_{M+2,1}$ in the orthogonal complement to (\ref{syst3}) such that
the corresponding function $S_{1,M+2,1}$ vanishes at 
$M+2$ distinct points $x_1, \dots x_{M+2} \in (t_0, t_1)$ which gives a contradiction.}
\end{remark}

\subsection{Density results}
Let a pair $(S_1, S_2)$ correspond 
to a function $h \in \he$ orthogonal to some system 
$\{g_\lambda\}_{\lambda\in \Lambda_1} \cup \{K_\lambda\}_{\lambda\in \Lambda_2}$
and let $S_1$ and $S_2$ be real 
on $\rl$. We show that most of the zeros of $S$ are in a certain sense 
close to the set $\{t_n\}$ (the support of a de Branges orthogonal basis).
Thus, the zeros of $S_2$ which do not depend on the choice
of the basis form a small proportion of the zeros of $S$ 
(see Corollary \ref{dens1} below).
                         
By Lemma~\ref{zeros2}, $S=S_0 H$,
where $S_0$ has exactly one zero in each of the intervals $(t_n, t_{n+1})$
and $H$ is a polynomial. Moreover, by (\ref{ext3})
we have $\{S(t_n)/ A'(t_n)\} \in\ell^1$, whence
$\{S_0(t_n)/ A'(t_n)\} \in\ell^1$.
By Lemma~\ref{growth2} we have $S \in \mathcal{P}_M\cdot\he$ 
for some $M$, whence $S_0/A$ grows at most polynomially along 
$i\mathbb{R}_+$.
Since the zeros of $A$ and $S_0$ interlace, the function
$S_0/A$ is a Herglotz function and thus has a representation
\begin{equation}
\label{herg}
\frac{S_0(z)}{A(z)} =az+b +\sum_n \frac{c_n}{z-t_n}, \qquad 
\{c_n\}\in\ell^1.
\end{equation}
We will show that in this case the zeros of $S_0$ (and $S$) 
must be necessarily close (in some sense) 
to the points $t_n$. The case when $a\ne 0$ or $b\ne 0$ 
should be treated exactly as in Proposition~\ref{closeness}. 
The remaining case follows from the following 
proposition (apparently, known to experts).

\begin{proposition}
\label{closeness1}
Let $t_n\in \RR$, $n\in\ZZ$, $t_n \to \pm \infty$, $n\to \pm \infty$,
and let $\mu_n>0$, $\sum_n \mu_n = M<\infty$. Let $A$ 
be an entire function which is real on $\RR$ 
and has only simple real zeros at the points $\{t_n\}$.
Define an entire function $B$ by the Herglotz representation 
$$
\frac{B(z)}{A(z)} = \sum_n \frac{\mu_n}{z- t_n}.
$$
Denote by $s_n$ the zero of $B$ in $(t_n, t_{n+1})$. Then
\begin{equation}
\label{asimp0}
\sum_{s_n>0} \frac{t_{n+1} - s_n}{s_n}<\infty, \qquad\quad
\sum_{s_n<0} \frac{s_n - {t_n}}{|s_n|}<\infty.
\end{equation}
\end{proposition}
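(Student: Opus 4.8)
The plan is to forget about the entire functions $A,B$ and work with the Cauchy transform $m(z):=\sum_n\mu_n/(z-t_n)=B(z)/A(z)$ itself, since the $s_n$ are exactly the real zeros of $m$ (one in each gap $(t_n,t_{n+1})$, because $m$ decreases from $+\infty$ to $-\infty$ there). As $m$ maps $\mathbb{C}_+$ into $\mathbb{C}_-$, both $-m$ and $1/m$ are Herglotz functions; moreover $m(iy)=\frac{M}{iy}(1+o(1))$ as $y\to\infty$, so $1/m$ has linear term $z/M$ and the representation
$$\frac1{m(z)}=\frac zM+\tilde b+\sum_n\rho_n\Bigl(\frac1{s_n-z}-\frac{s_n}{1+s_n^2}\Bigr),\qquad \rho_n=\frac1{|m'(s_n)|}>0,\quad\sum_n\frac{\rho_n}{1+s_n^2}<\infty .$$
Deleting the pole at $s_n$, write $\frac1{m(z)}=\frac{\rho_n}{s_n-z}+R_n(z)$; the representing measure of $R_n$ is $\sum_{k\ne n}\rho_k\delta_{s_k}\ge 0$, so $R_n$ is again Herglotz with the same leading coefficient $1/M$ and, in particular, increasing on $(s_{n-1},s_{n+1})$. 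Evaluating $1/m$ at its zero $t_{n+1}\in(s_n,s_{n+1})$ yields the identity $t_{n+1}-s_n=\rho_n/R_n(t_{n+1})$.

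The heart of the matter is the estimate $R_n(t_{n+1})\gtrsim s_n$ whenever $s_n>0$ (and, by the reflection $x\mapsto -x$, the corresponding estimate for the left neighbour when $s_n<0$). Granting it, the identity above gives $\dfrac{t_{n+1}-s_n}{s_n}=\dfrac{\rho_n}{s_nR_n(t_{n+1})}\lesssim\dfrac{\rho_n}{s_n^2}\le\dfrac{2\rho_n}{1+s_n^2}$ once $s_n\ge 1$; since the $t_k$ have no finite accumulation point there are only finitely many $n$ with $0<s_n<1$, so $\sum_{s_n>0}\dfrac{t_{n+1}-s_n}{s_n}\lesssim 1+\sum_n\dfrac{\rho_n}{1+s_n^2}<\infty$, and the sum over $s_n<0$ is identical after reflecting. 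To prove $R_n(t_{n+1})\gtrsim s_n$ one exploits that $R_n$ is Herglotz with leading coefficient $1/M$: at a point lying not too close to its poles $\{s_k:k\ne n\}$ the value of $R_n$ is comparable to $x/M$, and $t_{n+1}$ is such a point up to contributions of a favourable sign — the pole of $R_n$ at $s_{n+1}>t_{n+1}$ contributes positively — so that $R_n(t_{n+1})\gtrsim t_{n+1}/M\ge s_n/M$; the remaining (possibly negative) pieces, coming from $s_{n-1}$ and from distant $s_k$, are controlled using the interlacing together with $\sum_k\rho_k/(1+s_k^2)<\infty$ and the bounds $\rho_k/(s_k-t_k)^2\le 1/\mu_k$, $\rho_k/(t_{k+1}-s_k)^2\le1/\mu_{k+1}$ obtained by differentiating $1/m$ at its zeros $t_k$. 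An alternative, more hands-on route — parallel to the treatment of the cases $a\ne 0$, $b\ne 0$ alluded to before the statement and carried out in Proposition~\ref{closeness} — is to use Boole's equality $|\{x:|m(x)|>\lambda\}|=2M/\lambda$: on the segment immediately to the right of $s_n$ the function $|m|$ must reach the level $c/s_n$ within a distance $\lesssim \mu_{n+1}s_n/M$ plus a correction accounting for the $\mu$-mass lying to the right of $s_n$, and summing these corrections is legitimate precisely because $\sum_n\mu_n<\infty$.

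I expect the quantitative statement $R_n(t_{n+1})\gtrsim s_n$ — equivalently, that the zero $s_n$ of a Cauchy transform of a finite positive measure cannot lag behind the adjacent node $t_{n+1}$ by more than a fixed multiple of $\mu_{n+1}s_n/M$ plus tail corrections — to be the only real obstacle; once it is in hand the rest is bookkeeping with Herglotz representations, and the case $s_n<0$ requires nothing new beyond the substitution $x\mapsto -x$.
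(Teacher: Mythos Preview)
Your approach diverges from the paper's in an essential way. The paper does not analyse the Herglotz representation of $1/m$; instead it passes to the logarithm. Writing $E=A-iB$ and $\Theta=E^*/E$, one has $\dfrac{1+\Theta}{1-\Theta}=\dfrac{1}{i\,m}$, so the asymptotic $m(iy)\sim M/(iy)$ reads $\dfrac{1+\Theta(iy)}{1-\Theta(iy)}\sim y/M$. The paper then invokes the Krein spectral shift representation
\[
\log\frac{\Theta+1}{\Theta-1}=c+\int_{\RR}\Bigl(\frac{1}{t-z}-\frac{t}{t^2+1}\Bigr)f(t)\,dt,\qquad
f=\begin{cases}-1/2,& t\in(t_n,s_n),\\ \ \ 1/2,& t\in(s_n,t_{n+1}),\end{cases}
\]
takes real parts along $i\RR_+$, and compares with the explicit integral $\int_{\RR}\frac{(y^2-1)|t|}{(t^2+y^2)(t^2+1)}|f(t)|\,dt=\log y+O(1)$. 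Subtracting forces $\int_{\{tf(t)>0\}}\frac{tf(t)}{t^2+1}\,dt<\infty$, which is precisely \eqref{asimp0} after integrating over the relevant subintervals. The point is that the logarithm converts the interlacing product structure into an additive one, so no pointwise control on individual gaps is needed.

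Your route has a genuine gap at exactly the place you flag. The claimed inequality $R_n(t_{n+1})\gtrsim s_n$ would yield the \emph{termwise} bound $(t_{n+1}-s_n)/s_n\lesssim\rho_n/s_n^2$, which is strictly stronger than the summability asserted in the proposition; there is no reason this uniform bound should hold. Concretely, in $R_n(t_{n+1})=\dfrac{t_{n+1}}{M}+\text{const}+\sum_{k\ne n}\dfrac{\rho_k}{s_k-t_{n+1}}$, the negative contribution from the nearest left pole $s_{n-1}$ is $-\rho_{n-1}/(t_{n+1}-s_{n-1})$, and the tools you list only bound this by $(t_n-s_{n-1})/\mu_n$, which can be far larger than $t_{n+1}/M$ when the gap $t_n-t_{n-1}$ is wide and $\mu_n$ is small. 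Neither the positivity of the $s_{n+1}$-term nor the tail condition $\sum_k\rho_k/(1+s_k^2)<\infty$ rescues this. As for the Boole alternative: the argument in Proposition~\ref{closeness} produces only a zero-density conclusion, not summability, so the sketch ``$|m|$ must reach level $c/s_n$ within distance $\lesssim\mu_{n+1}s_n/M$ plus corrections'' would need a substantially new idea to be upgraded. The paper's logarithmic method bypasses these obstacles entirely.
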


\begin{proof}
The zeros of $B$ are simple and interlace with 
the zeros of $A$. Since $\ima \frac{B}{A} > 0$ in $\cp$, the function
$E=A-iB$ is in the Hermite--Biehler class and so 
we can define the de Branges space $\he$. The measure $\mu = \sum_n \mu_n \delta_{t_n}$ 
is a corresponding Clark measure for which the embedding  
operator $\frac 1{\pi E}\he \to L^2(\mu)$ is unitary.

Consider the inner function $\Theta = E^*/E$. 
Since $2A/E = 1+\Theta$ 
and $2B/E = -i(\Theta-1)$, we have  
$$
i\frac{1-\Theta(z)}{1+\Theta(z)} = \int_\RR \frac{d\mu(t)}{t-z} 
\sim i\frac{M}{y}, \qquad z=iy, \ y\to+\infty.
$$
Hence, 
\begin{equation}
\label{asimp1}
\frac{1+\Theta(iy)}{1-\Theta(iy)} \sim \frac{y}{M}, \qquad y\to+\infty.
\end{equation}
It is well known 
that the function $\Theta$ may be reconstructed from 
the sets $\{t_n\}=\{\Theta = 1\}$ and $\{s_n\}=\{\Theta = -1\}$
by the formula 
$$
\log\frac{\Theta+1}{\Theta-1} = c +
\int_\RR
\bigg(\frac{1}{t-z}-\frac{t}{t^2+1} \bigg) f(t) dt,
$$
where
$$
f(t) = \begin{cases}
- 1/2, & t\in (t_n, s_n), \\
  1/2, & t\in (s_n, t_{n+1}),
\end{cases}
$$
and $c\in \RR$ (essentially, this is a very 
special case of the Krein spectral shift formula \cite{kr}, see also 
\cite[Section 6.1]{pm}). 
Then, by (\ref{asimp1}), we have 
$$
\int_\RR \frac{(1-y^2) t}{(t^2+y^2)(t^2+1)} f(t) dt =
\rea \int_\RR \bigg(\frac{1}{t-iy}-\frac{t}{t^2+1} \bigg) f(t) dt =
\log y +O(1), 
\quad y\to +\infty.
$$
A direct computation shows, however, that 
$$
\int_\RR \frac{(y^2-1) |t|}{(t^2+y^2)(t^2+1)} |f(t)| dt =
\log y +O(1), \qquad y\to +\infty,
$$
whence
$$
\int_{\{t:\  t f(t)>0 \}} 
\frac{(y^2-1)\, tf(t)}{(t^2+y^2)(t^2+1)}  dt = O(1), \qquad y\to +\infty,
$$
and therefore 
$$
\int_{\{t: \ t f(t) > 0 \}} 
\frac{t f(t)}{t^2+1} dt <\infty. 
$$
Since $tf(t)> 0$ for $t\in (s_n, t_{n+1})$, $s_n>0$, 
or $t\in (t_n, s_n)$, $s_n<0$, we have
$$
\sum_{s_n>0} \int_{s_n}^{t_{n+1}} \frac{dt}{t}= \sum_{s_n>0} \ln\frac{t_{n+1}}{s_n}
<\infty, \qquad
\sum_{s_n<0} \int_{t_n}^{s_n} \frac{dt}{|t|}
=\sum_{s_n<0} \ln\frac{|t_n|}{|s_n|}<\infty. 
$$
The latter convergences are obviously equivalent to (\ref{asimp0}).
\end{proof}

As a corollary we immediately obtain a slightly refined version
of Proposition~\ref{closeness}. Moreover, if $t_n = n$, 
$n\in\ZZ$, $A(z)=\sin \pi z$, and $S=S_1S_2$ is the function
arising  from the possible one-dimensional defect in the Paley--Wiener space,
then
$$
\sum_{s\in \mathcal{Z}_2} \frac{1}{|s|} <\infty.
$$
Indeed, the zero set $\mathcal{Z}_2$ of the 
function $S_2$ does not depend on the choice
of the basis, therefore applying Proposition~\ref{closeness1} 
to $t_n = n$ and  $t_n = n+\delta$ (e.g., $\delta = \frac{1}{2}$), 
$n\in\ZZ$, we obtain
$$
\sum_{s\in \mathcal{Z}_2, \, s>0} \frac{[s]+1-s}{s} <\infty, \qquad 
\sum_{s\in \mathcal{Z}_2, \, s>0} \frac{[s-\delta] + 1+\delta-s}{s} <\infty.
$$

Under natural regularity conditions, Proposition~\ref{closeness1} 
implies the following 
closeness of the sequences $\{t_n\}$ and $\{s_n\}$.

\begin{corollary}
\label{dens1}
Let $A$, $B$, $\{t_n\}$ and $\{s_n\}$ be as in Proposition~\ref{closeness1}.
Put $I_n = [t_n, t_{n+1}]$. Assume that $|I_k| \asymp |I_n|$, $n\le k\le 2n$,
with the constants independent on $k, n$, and that $|t_{an}| \ge \rho |t_n|$
with some $a\ge 2,\rho>1$. Then for any $\delta>0$ the set $\mathcal{N}$ of indices 
$n$ such that $t_n>0$ and $t_{n+1}-s_n\ge \delta |I_n|$ 
$($respectively, $t_n<0$ and $s_n-t_n \ge  \delta |I_n|$$)$ has zero 
density. 
\end{corollary}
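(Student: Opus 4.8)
\emph{Proof proposal.} The plan is to feed the summability estimate of Proposition~\ref{closeness1} into the two regularity hypotheses, which together force $t_n\asymp n|I_n|$ for large $|n|$, and then to pass from a convergent series $\sum 1/|n|$ to the asserted density statement by a dyadic block argument. Write $\mathcal{N}_+$ for the set of $n$ with $t_n>0$ and $t_{n+1}-s_n\ge\delta|I_n|$ (the first of the two sets in the statement) and $\mathcal{N}_-$ for the second; by symmetry it suffices to prove that $\mathcal{N}_+$ has density zero.

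If $n\in\mathcal{N}_+$, then $s_n\in(t_n,t_{n+1})$ gives $0<s_n<t_{n+1}$, and so
$$
\frac{t_{n+1}-s_n}{s_n}\ \ge\ \frac{\delta|I_n|}{s_n}\ \ge\ \frac{\delta|I_n|}{t_{n+1}}.
$$
Summing over $n\in\mathcal{N}_+$ and using the first series in \eqref{asimp0}, I obtain $\sum_{n\in\mathcal{N}_+}|I_n|/t_{n+1}<\infty$.

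Next I claim that the two regularity hypotheses force $t_n\asymp n|I_n|$ as $n\to+\infty$. Iterating the comparison $|I_k|\asymp|I_n|$, $n\le k\le 2n$, a bounded number of times (legitimate since $a$ is fixed) yields $|I_k|\asymp|I_n|$ for $n\le k\le an$, and likewise for $\lceil n/a\rceil\le k\le n$, with constants depending only on $a$ and the given comparison constant. Consequently $t_{an}-t_n=\sum_{k=n}^{an-1}|I_k|\asymp n|I_n|$; combined with $t_{an}\ge\rho t_n$ this gives $t_n\lesssim n|I_n|$, while $t_n\ge t_n-t_{\lceil n/a\rceil}=\sum_{\lceil n/a\rceil\le k<n}|I_k|\asymp n|I_n|$ gives $t_n\gtrsim n|I_n|$. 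In particular $|I_n|\asymp t_n/n$, so $t_{n+1}=t_n+|I_n|\asymp t_n\asymp n|I_n|$, and therefore $|I_n|/t_{n+1}\asymp 1/n$.

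Combining the last two paragraphs, $\sum_{n\in\mathcal{N}_+}1/n<\infty$. Setting $a_j={\rm card}\,\bigl(\mathcal{N}_+\cap(2^{j},2^{j+1}]\bigr)$, we get $\sum_j a_j 2^{-j}<\infty$, hence $a_j 2^{-j}\to0$, and then for $N\in(2^{J},2^{J+1}]$ a routine summation gives ${\rm card}\,(\mathcal{N}_+\cap[1,N])\le C+\sum_{j\le J}a_j=o(2^{J})=o(N)$. Thus $\mathcal{N}_+$ has density zero; the identical argument applied to $\mathcal{N}_-$, using the second series in \eqref{asimp0} together with $|t_n|\asymp|n|\,|I_n|$ as $n\to-\infty$, completes the proof. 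The only point requiring care is the comparison $t_n\asymp n|I_n|$ — precisely what the two regularity conditions are designed to supply — and the remainder is bookkeeping, so I anticipate no real obstacle.
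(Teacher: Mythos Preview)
Your argument is correct and follows essentially the same route as the paper. The paper argues by contradiction: assuming positive upper density of $\mathcal{N}$, it finds blocks $[M_j,aM_j]$ on which $\sum_{\mathcal N}\frac{t_{n+1}-s_n}{s_n}\gtrsim\sum_{[M_j,aM_j]}\frac{|I_n|}{t_n}\gtrsim\log\rho$, contradicting \eqref{asimp0}; you instead extract from the regularity hypotheses the equivalence $|I_n|/t_{n+1}\asymp 1/n$ and pass directly to $\sum_{\mathcal N_+}1/n<\infty$ before a dyadic density argument. These are contrapositive repackagings of the same idea, with your derivation of $t_n\asymp n|I_n|$ making explicit what the paper summarizes as ``$|t_k|\asymp|t_n|$, $n\le k\le an$''.
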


\begin{proof}
Note that $|t_k|\asymp |t_n|$, $n\le k \le an$.
If the upper density of $\mathcal{N}$ is positive, then there exists 
a sequence $M_j\to\infty$ such that 
$$
\sum_{n\in [M_j, a M_j]\cap\mathcal{N}} \frac{t_{n+1} - s_n}{s_n} \gtrsim 
\sum_{n\in [M_j, a M_j]\cap\mathcal{N}} \frac{t_{n+1} - t_n}{t_n}
\gtrsim \sum_{n\in [M_j, a M_j]} \frac{t_{n+1} - t_n}{t_n} \gtrsim 
\log \frac{t_{aM_j}}{t_{M_j}} \ge \log \rho,
$$
and the first series in (\ref{asimp0}) diverges, a contradiction.
\end{proof}

Arguing as in the proof of Theorem~\ref{main2} we deduce 
Theorem~\ref{dop7} from Corollary~\ref{dens1}.


\subsection{Nonhereditarily complete systems of reproducing 
kernels in de Branges spaces}
In this section we prove Theorem \ref{example}, i.e.,
we construct a de Branges space $\he$
and a complete and minimal system of reproducing kernels 
$\{K_\lambda\}_{\lambda\in \Lambda}$ such that its
biorthogonal system is also complete, but the system 
$\{K_\lambda\}_{\lambda\in \Lambda}$ is not hereditarily complete.

We have already seen 
that the existence of a nonhereditarily complete system 
of reproducing kernels generated by some function $G$ 
in the de Branges space $\he$ 
is equivalent to the solvability of the equations
$$
\sum_n\frac{\overline a_n \mu_n^{1/2}}{z-t_n} 
=\frac{G_2(z)S_2(z)}{A(z)},
$$
\begin{equation}
\label{ext2c}
\sum_n \frac{G(t_n)}{E(t_n)} \cdot \frac{a_n \mu_n^{1/2}}{z-t_n} =
i \frac{G_1(z)S_1(z)}{A(z)}
\end{equation}
for some nonzero  $\{a_n\}\in\ell^2$ and 
some entire functions $S_1$  and $S_2$.
If all the above objects are found, then $h=G_2S_2$ 
is orthogonal to the corresponding system.
The corresponding equations will be constructed 
as small perturbations of an orthogonal expansion in a de Branges space
with respect to a reproducing kernels basis.

Let the sequence $\{t_n\}$ satisfy (\ref{hypot}). 
Without loss of generality we may assume that $t_n \ge 0$, $n\ge 0$
and $t_n <0$, $n<0$.
It follows from (\ref{hypot}) that $|t_n| \asymp |t_{n+1}|$ and $|t_n| 
\gtrsim |n|^\gamma$, $|n|\to\infty$, with some $\gamma >0$.

We construct the space $\he$ and the functions $G_1$, $G_2$, $S_1$
and $S_2$ in the reverse order. Namely, we start with the construction of 
the function $S$. 
First choose two sequences of positive integers 
$n_k, l_k \to \infty$ with the following properties:
$$
2t_{n_k} < t_{n_k + l_k} < \frac{t_{n_{k+1}}}{2} \quad
\text{and} \quad 
k(t_{n_k + 1}-t_{n_k}) \le t_{n_k}/100, 
\qquad k\in \mathbb{N}.
$$

Let $a_n \in \RR$ be such that 
$$
|a_{n_k}| = |a_{n_{k}+1}| = |a_{n_k +l_k}| = |a_{n_{k}+ l_k +1}| = k^{-1},
$$
and let $|a_n| = (|n|+1)^{-1}$ for all other values of $n$.
Note that $|a_n| \gtrsim |t_n|^{-M}$ for some $M>0$.
The signs of $a_n$ will be specified later on. 
Let $A$ be a canonical Hadamard product (of finite genus) whose zeros are simple
and coincide with $\{t_n\}$ (thus, 
$A$ is real on $\rl$). Define the entire function $S$ by
$$
\frac{S(z)}{A(z)} = \sum_{n} \frac{a_n^2}{z-t_n}.
$$
Then $S$ has exactly one zero $z_n$ in each interval $(t_n,t_{n+1})$. 

We write $S$ as the product
$$
S=S_1S_2 = T_0T_1S_2, 
$$
where $T_0$ is the canonical product with the zeros $s_k = z_{n_k}$ in intervals 
$(t_{n_k}, t_{n_{k}+1})$ and $S_2$ is a canonical product with the zeros 
$z_{n_k + l_k}$ in $(t_{n_k + l_k}, t_{n_k + l_k +1})$, $k \in \mathbb{N}$.
Next we construct $h$. We will construct it as $h=\tilde T_0 T_1 S_2$
where $\tilde T_0$ is a perturbation of the function $T_0$
such that
\begin{equation}
\label{h1}
\frac{h(z)}{A(z)} = 
\sum_{n} \frac{c_n |a_n|}{z-t_n}, 
\end{equation}
\begin{equation}
\label{h2}
\sum_{n} c_n^2=\infty,
\qquad \sum_{t_n\ne 0} \frac{c_n^2}{t_n^2}<\infty.
\end{equation}
Condition (\ref{h1}) means that 
$$
\frac{S(z)}{A(z)}\cdot\frac{\tilde T_0(z)}{T_0(z)} = 
\sum_{n} \frac{\tilde T_0(t_n)}{T_0(t_n)}\cdot
\frac{a_n^2}{z-t_n}, 
$$
and $c_n= |a_n|\tilde T_0(t_n)/T_0(t_n)$. 
Let us show that all these conditions 
may be  satisfied. 

Assume that $|s_k -t_{n_k}|>|s_k- t_{n_k +1}|$. Then 
we shift the zero $s_k$ of $T_0$ in the following way: 
$$
\tilde s_k = t_{n_k+1} - k |s_k- t_{n_k +1}| \rho_k.
$$
(Analogously, if $|s_k -t_{n_k}|\le |s_k- t_{n_k +1}|$, we put
$$
\tilde s_k = t_{n_k} - k |s_k- t_{n_k}| \rho_k;
$$
in what follows we consider only the first situation.)
Let $\tilde T_0$ be the canonical product with the zeros $\tilde s_k$.

By hypothesis (\ref{hypot}) we may choose $\rho_k \in(1, 2)$ such that 
\begin{equation}
\label{h20}
\dist(\tilde s_k,\{t_n\}_{n\ne n_{k} +1}) \gtrsim |t_{n_k}|^{-N}
\end{equation}
for some $N>0$, 
$\tilde s_k \in (t_{n_k+1}/2, t_{n_k+1})$ 
and zero sets of $\tilde T_0$ and $T_1S_2$ do not intersect.
An easy estimate of the infinite products shows that
with such choice of zeros for $\tilde T_0$ we have 
$$
\bigg|\frac{\tilde T_0(x)}{T_0(x)}\bigg| 
\asymp \bigg|\frac{x-\tilde s_k}{x-s_k}\bigg|, \qquad
x \in \Big( \frac{t_{n_k} + t_{n_{k-1}}}{2}, 
\frac{t_{n_k} + t_{n_{k+1}}}{2}  \Big).
$$
Then we obtain 
$$
|c_{n_k+1}| \asymp \bigg|
\frac{\tilde T_0(t_{n_k+1})}{T(t_{n_k+1})}\bigg| \cdot |a_{n_k+1}|
\asymp
\bigg|\frac{t_{n_k+1}-\tilde s_k}{t_{n_k+1}-s_k}\bigg| \cdot  k^{-1}  \asymp 1,
$$
whence the first series in (\ref{h2}) diverges.
Moreover, it is easy to see that 
$$
\bigg| \frac{\tilde T_0(t_n)}{T_0(t_n)}\bigg| \gtrsim 1, \qquad
t_n \in \Big[t_{n_{k-1}}, \frac{t_{n_k}}{2}\Big] \cup [t_{n_k+1}, t_{n_{k+1}}],
$$
while
$$
\bigg| \frac{\tilde T_0(t_n)}{T_0(t_n)}\bigg| \gtrsim 
\frac{\dist(\tilde s_k,\{t_n\}_{n\ne n_{k} +1})}{t_n}, \qquad
t_n \in \Big[\frac{t_{n_k}}{2}, t_{n_k}\Big].
$$
Thus, by (\ref{h20}), we have
\begin{equation}
\label{imp}
|t_n|^{-N-1} \lesssim 
\bigg| \frac{\tilde T_0(t_n)}{T_0(t_n)}\bigg| \lesssim k, \qquad 
n_{k-1} \le n \le n_k,
\end{equation}
and $\Big| \frac{\tilde T_0(t_n)}{T_0(t_n)}\Big|\asymp 1$ 
for $n \le 0$.
Hence, 
\begin{equation}                          
\label{imp3}
\begin{aligned}
  t_n^{-N-1}|a_n|  \lesssim |c_n| & \lesssim |a_n| k \lesssim 1, 
  \qquad n_{k-1} \le n \le n_k, \\
  |c_n| & \asymp |a_n|, \qquad n\le 0,
\end{aligned}
\end{equation} 
and, thus, the second condition in 
(\ref{h2}) is satisfied (note that $k = o(t_{n_k})$, $k\to \infty$).

Moreover, $|\tilde T_0(iy)/T_0(iy)| \asymp 1$, and so both terms in (\ref{h1})
tend to zero along $i\RR$. We conclude that the interpolation formula holds.

Next we introduce a de Branges space $\he$. Put $\mu_n =c_n^2$
and $\mu = \sum_{n} \mu_n \delta_{t_n}$.
By (\ref{h2}), $\int (1+t^2)^{-1} d\mu(t) <\infty$, and
we can define a meromorphic inner function $\Theta$
by the formula
$$
\frac{1-\Theta(z)}{1+\Theta(z)} = 
\frac{1}{i}\int \bigg(\frac{1}{t-z} -\frac{t}{t^2+1} 
\bigg)d\mu(t), \qquad z\in \mathbb{C}_+.
$$
Then $\Theta = E^*/E$ for some entire function $E$ 
in the Hermite--Biehler class. We may assume that $E$ does not vanish on 
$\mathbb R$. Moreover, since the zero set of 
$E+E^*$ coincides with $\{t_n\}$, we may choose $E$ so that
$E+E^* = 2A$. Now, if we choose the signs of $a_n$
so that ${\rm sign}\, a_n = {\rm sign}\, c_n$, formula (\ref{h1})
becomes
$$
\frac{h(z)}{A(z)} = 
\sum_{n} \frac{a_n|c_n|}{z-t_n}=
\sum_{n} \frac{a_n \mu_n^{1/2}} {z-t_n}. 
$$
Hence, $h\in \he$. 

We have $h = \tilde T_0 T_1 S_2$. Put $G_2 = \tilde T_0 T_1$. Then $h=G_2S_2$
and it remains to construct $G_1$ so that $G$ is the generating function
of a complete and minimal system of reproducing kernels in 
$\he$ and (\ref{ext2c}) is satisfied.

We will construct $G_1$ as a small perturbation of $S_2$ as we did above.
We need to satisfy $G\notin \he$, $G\in \he+z\he$ and (\ref{ext2c})
which is rewritten as 
\begin{equation}
\label{ext2a}
\frac{S(z)}{A(z)}\cdot \frac{G_1(z)}{S_2(z)} = 
-i \sum_{n} 
\frac{G_1(t_n)}{S_2(t_n)}\cdot\frac{h(t_n)}{E(t_n)} \cdot 
\frac{ a_n \mu_n^{1/2}}{z-t_n}.
\end{equation}
Note that in any de Branges space we have 
$i A'(t_n) =  -E(t_n) \varphi'(t_n) =-E(t_n) \mu_n^{-1}$. 
Then (\ref{ext2a}) simplifies to
$$
\frac{S(z)}{A(z)} \cdot \frac{G_1(z)}{S_2(z)} = 
\sum_{n} 
\frac{G_1(t_n)}{S_2(t_n)}\cdot\frac{h(t_n)}{ A'(t_n) |c_n|}\cdot \frac{a_n}{z-t_n}.
$$
The residues, obviously, coincide.

Applying the above construction to $S_2$ in place of $T_0$ 
(i.e., shifting the zeros $z_{n_k+l_k}$) 
we construct $G_1$ (again we may assume that $G_1$ has no common zeros with 
$\tilde T_0 T_1$) so that 
\begin{equation}
\label{imp22}
\bigg|\frac{G_1(t_n)}{S_2(t_n)}\bigg| \lesssim k,
\qquad  l_k+n_k \le n \le n_{k+1} +  l_{k+1}.
\end{equation}
and
$$
\bigg|\frac{G_1(t_{n_k+l_k+1})}{S_2(t_{n_k +l_k+1})}\bigg| \cdot |a_{n_k+l_k+1}| 
\asymp 1. 
$$

Note that $|h(t_n)| = |A'(t_n)|\cdot |a_n| \cdot \mu_n^{1/2} = 
|E(t_n)|\cdot|a_n| \cdot|c_n|^{-1}$. Then 
$$
\bigg|\frac{G(t_n)}{E(t_n)}\bigg| = 
\bigg|\frac{G_1(t_n)}{S_2(t_n)}\bigg|\cdot|a_n| \cdot|c_n|^{-1}.
$$
Hence, in particular, 
$$
\bigg|\frac{G(t_{n_k+l_k+1})}{E(t_{n_k+l_k+1})}\bigg|
\asymp  |c_{n_k+l_k+1}|^{-1},
$$
whence, $\|G/E\|^2_{L^2(\mu)} = \sum_n |G(t_n)|^2 |E(t_n)|^{-2} |c_n|^{2} = \infty$. 
Thus, $G \notin \he$. However, by (\ref{imp22}),
$$
\sum_{t_n\ne 0} \frac{|G(t_n)|^2c_n^2} {t_n^2 |E(t_n)|^2} \lesssim 
\sum_{t_n\ne 0} \frac{a_n^2}{t_n^2}\bigg|\frac{G_1(t_n)}{S_2(t_n)}\bigg|^2
<\infty,
$$
whence $\frac{G(z)}{(z-\lambda)E(z)} \in L^2(\mu)$
for the zeros $\lambda$ of $G$.
Also $|G_1(iy)/S_2(iy)| \asymp 1$, so 
\begin{equation}
\label{imp2}
\bigg|\frac{G(iy)}{A(iy)}\bigg|\asymp 
\bigg|\frac{S(iy)}{A(iy)}\bigg| \asymp |y|^{-1}, \qquad |y| 
\to \infty,
\end{equation}
and by \cite[Theorem 26]{br}, $G\in\he +z\he$.
Estimate (\ref{imp2}) also 
yields the interpolation formula (\ref{ext2a}).

It remains to show that $G$ is the generating function of a complete and minimal 
system of kernels such that its biorthogonal is also complete.

To prove the first statement, we use that, 
by the construction, $S/G = T_0 S_2/(\tilde T_0 G_1)$ is a Smirnov 
class function both in the upper and the lower half-planes, while $A/S$ 
is a Herglotz function and, thus, also a Smirnov class function.
Hence, if $GH \in \mathcal{H}(E)$, then an application of
Krein's theorem (Subsection~\ref{51}) yields that $H$ is of zero exponential type.
Then it follows from \eqref{imp2} that $H$ is a polynomial, which contradicts the fact
that $G \notin \he$.

Finally, by \eqref{imp3}, $|c_n| \gtrsim |t_n|^{-N-1}|a_n|$,
thus $\mu_n \gtrsim |t_n|^{-M}$ and also $\sum_{n} \mu_n =\infty$.
Then, by \cite[Theorem 1.2]{bb}, the system biorthogonal to
$\{K_\lambda: \ G(\lambda)=0\}$ is also complete. 
This completes the construction of the example 
(and, thus, the proof of Theorem \ref{example}).

\end{document}